\newtheorem{theorem}{Theorem}[section]
\newtheorem{proposition}[theorem]{Proposition}
\newtheorem{lemma}[theorem]{Lemma}
\theoremstyle{definition}
\newtheorem{definition}[theorem]{Definition}
\newtheorem{example}[theorem]{Example}
\newtheorem{examples}[theorem]{Examples}
\theoremstyle{remark}
\newtheorem{remark}[theorem]{Remark}
\numberwithin{equation}{section}
\renewcommand{\p@enumii}{\theenumi-}
\renewcommand{\p@enumiii}{\theenumi-\theenumii-}
\renewcommand{\p@enumiv}{\theenumi-\theenumii-\theenumiii-}
\newcommand{\hits}{\rightharpoonup}
\newcommand{\hitted}{\leftharpoonup}
\newcommand{\id}{\mathsf{id}}
\newcommand{\co}{\mathsf{co}}
\newcommand{\coo}{\mathsf{coinv}}
\newcommand{\ord}{\mathsf{ord}}
\newcommand{\sgn}{\mathsf{sgn}}
\newcommand{\kk}{\Bbbk}
\newcommand{\G}{\mathbf{G}}
\newcommand{\Z}{\mathbb{Z}}
\newcommand{\sigmab}{\pmb{\sigma}}
\newcommand{\eb}{\pmb{e}}
\newcommand{\A}{\mathcal{A}}
\newcommand{\AM}{H}
\newcommand{\B}{\mathcal{B}}
\newcommand{\C}{\mathcal{C}}
\newcommand{\HH}{\mathcal{H}}
\newcommand{\KK}{\mathcal{K}}
\newcommand{\CC}{\mathsf{Spl}}
\newcommand{\T}{\mathscr{T}}
\newcommand{\PP}{\mathbf{P}}
\newcommand{\s}{\mathsf{s}}
\newcommand{\sym}{{\mathfrak{S}_3}}
\newcommand{\z}{\xi}
\newcommand{\Vect}{\mathcal{V}}
\newcommand{\sVect}{\mathcal{SV}}
\newcommand{\stw}{\mathsf{s}\text{-}\mathsf{sym}}
\newcommand{\YD}[1]{{}^{#1}_{#1}\mathcal{YD}}
\newcommand{\YDH}[1]{{}^{#1}_{#1}\mathcal{YDH}}
\newcommand{\lcomod}[1]{{}^{#1}\mathcal{M}}
\newcommand{\rcomod}[1]{\mathcal{M}^{#1}}
\newcommand{\lmod}[1]{{}_{#1}\mathcal{M}}
\newcommand{\lsmod}[1]{{}_{#1}\mathcal{SM}}
\newcommand{\rscomod}[1]{\mathcal{SM}^{#1}}
\newcommand{\AD}[1]{\mathsf{AD}(#1)}
\newcommand{\SD}[1]{\mathsf{SD}(#1)}
\newcommand{\rhmod}[1]{\mathcal{M}_{#1}^{#1}}
\newcommand{\Ker}{\mathsf{Ker}}
\newcommand{\ratimes}{\mathbin{>\hspace{-4.8pt}\lhd}}
\newcommand{\rctimes}{\mathbin{>\hspace{-2.5pt}\blacktriangleleft}}
\newcommand{\biprod}{\mathbin{\#}}
\newcommand{\HopfAuto}{\mathsf{Aut}_{\mathrm{Hopf}}}
\newcommand{\qedheree}{\hfill$\square$}
\newcommand{\tabb}[1]{\begin{tabular}{l}#1\end{tabular}}
\newcommand{\fd}{\mathsf{fd}}
\begin{document}

\title{On classification of Hopf superalgebras of low dimension}

\author{Taiki Shibata}
\address[T.~Shibata]{Department of Applied Mathematics, Okayama University of Science,
1-1 Ridai-cho Kita-ku Okayama-shi, Okayama 700-0005, JAPAN}
\email{shibata@ous.ac.jp}

\author{Ryota Wakao}
\address[R.~Wakao]{Department of Applied Mathematics, Okayama University of Science,
1-1 Ridai-cho Kita-ku Okayama-shi, Okayama 700-0005, JAPAN}
\email{r23nda8mr@ous.jp}

\subjclass[2020]{16T05, 17A70}
\date{}
\keywords{Hopf superalgebra, bosonization, classification}

\begin{abstract}
We examine the inverse procedure of the Radford-Majid bosonization for Hopf superalgebras and give a handy method for enumerating Hopf superalgebras whose bosonization is isomorphic to a given Hopf algebra.
As an application, we classify Hopf superalgebras of dimension up to $5$ and give examples of higher dimensions.
\end{abstract}

\maketitle

\setcounter{tocdepth}{2}
\tableofcontents

\section{Introduction}
The classification problem of finite-dimensional Hopf algebras has been actively studied by many researchers since it was proposed by Kaplansky in 1975; see the survey \cite{BeaGar13}.
Due to the importance of supersymmetry in mathematics and mathematical physics, we are rather interested in classification problem of finite-dimensional {\em Hopf superalgebras} over an algebraically closed field $\kk$ of characteristic zero.
Among them, super-(co)commutative ones have been fully classified by the following result:
According to Kostant~\cite[Theorem~3.3]{Kos77} (see also \cite[Corollary~2.3.5]{AndEtiGel01}), a finite-dimensional super-cocommutative Hopf superalgebra can be decomposed into a semidirect product of the group algebra $\kk\Gamma$ of a finite group $\Gamma$ and the exterior superalgebra $\bigwedge(V)$ over a finite-dimensional $\kk\Gamma$-module $V$.

In 2014, Aissaoui and Makhlouf~\cite{AisMak14} gave a complete list of Hopf superalgebras of dimension up to $4$ using a computer software and found some non-trivial Hopf superalgebras (see Example~\ref{ex:AM}).
However, for higher dimensions, a systematic study has not yet been done, and the classification problem is widely open.
In this paper, we use an approach different from \cite{AisMak14}, classify Hopf superalgebras of dimension up to $5$, and give new examples.
The key ingredient of our approach is the {\it bosonization} technique introduced by Radford~\cite{Rad85} and Majid~\cite{Maj94} (see Section~\ref{subsec:boson}).
This technique involves reducing Hopf superalgebras $\HH$ to ordinary Hopf algebras $\widehat{\HH}$, which allows us to apply various (classification) results obtained in ordinary settings.

Let us explain our approach in detail.
Let $A$ be a Hopf algebra over $\kk$.
It has been known that there is a one-to-one correspondence between the isomorphism classes of the set of all split epimorphisms $A\to \kk\Z_2$ of Hopf algebras and the isomorphism classes of Hopf algebras $\HH$ in the category of $\Z_2$-Yetter-Drinfeld modules $\YD{\kk\Z_2}$ such that $\widehat{\HH}\cong A$.
We show that the set of all split epimorphisms $A\to \kk\Z_2$ is parameterized by the set
\[
\AD{A} = \{ (g, \alpha) \in \G(A) \times \G(A^\circ) \mid \ord(g) = 2, \ord(\alpha) = 2,  \alpha(g) = -1 \},
\]
whose element is called an {\it admissible datum} for $A$ in this paper (Definition~\ref{def:yddata}).
Here, for a Hopf algebra $K$, we have denoted by $\G(K)$ (resp.~$K^\circ$) the group of group-like elements of $K$ (resp.~the finite dual of $K$).
Given an admissible datum $(g,\alpha)$, we let $\pi_{(g,\alpha)}$ be the corresponding split epimorphism $A \to \kk\Z_2$, see \eqref{eq:pi} for the precise definition.
We show that the Hopf algebra $\HH$ in $\YD{\kk\Z_2}$ corresponding to $\pi_{(g,\alpha)}$ is a Hopf superalgebra whose $\bar1$-part is non-zero if and only if
\[
\alpha\hits a \hitted \alpha = g a g \quad (a \in A)
\quad \text{and} \quad
g\not\in Z(A),
\]
where $Z(A)$ is the center of $A$.
We define $\SD{A}$ to be the subset of $\AD{A}$ consisting of elements satisfying this condition and call its element a {\em super-datum} for $A$ (Definition~\ref{def:SD}).
The group of Hopf algebra automorphisms of $A$ naturally acts on $\SD{A}$.
A key observation is that there is a one-to-one correspondence between the orbits of $\SD{A}$ and the isomorphism classes of the set of all Hopf superalgebras $\HH$ such that $\HH_{\bar1}\neq0$ and $\widehat{\HH} \cong A$.
Moreover, the Hopf superalgebra $\HH$ is semisimple (resp.~pointed) if and only if $A$ is semisimple (resp.~pointed).

In this paper, we study Hopf superalgebras by utilizing the above bijection.
One of our applications classifies finite-dimensional Hopf superalgebras of prime dimensions.
By the classification results of Hopf algebras of dimension $2p$ for a prime number $p$ by Masuoka~\cite{Mas95} and Ng~\cite{Ng05},
we immediately get the following result.

\begin{theorem}[= Theorems~\ref{prp:2-dim} and \ref{thm:main1}]
Up to isomorphism, $\bigwedge(\kk)$ is the only Hopf superalgebra of dimension $2$ whose $\bar{1}$-part is non-zero.
If $p$ is an odd prime number, then a Hopf superalgebra of dimension $p$ is purely even, that is, its $\bar{1}$-part is zero (thus such a Hopf superalgebra is isomorphic to $\kk\Z_p$ by Zhu~\cite{Zhu94}).
\end{theorem}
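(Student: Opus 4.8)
The plan is to translate both statements, via the one-to-one correspondence described in the introduction, into a search for super-data on ordinary Hopf algebras. Since the bosonization $\widehat{\HH}$ of a Hopf superalgebra $\HH$ satisfies $\dim\widehat{\HH}=2\dim\HH$, a Hopf superalgebra of dimension $2$ (resp.\ of odd prime dimension $p$) with non-zero $\bar1$-part has bosonization of dimension $4$ (resp.\ $2p$). Thus, by the bijection between isomorphism classes of such Hopf superalgebras and orbits of $\SD{A}$ under $\HopfAuto(A)$, it suffices to run over the finitely many Hopf algebras $A$ of the relevant dimension and decide which admit a super-datum. For this I would invoke the classification of $4$-dimensional Hopf algebras and the classification of Hopf algebras of dimension $2p$ due to Masuoka \cite{Mas95} and Ng \cite{Ng05}, all of which are semisimple and hence group algebras or their duals, apart from Sweedler's algebra in dimension $4$.

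For the dimension-$2$ statement the candidates are $A=\kk\Z_4$, $A=\kk(\Z_2\times\Z_2)$, and Sweedler's Hopf algebra $H_4$ (with its standard grouplike $g$ and skew-primitive $x$ satisfying $gx=-xg$, $x^2=0$). The first two are commutative, so every grouplike lies in $Z(A)$ and the requirement $g\notin Z(A)$ in the definition of $\SD{A}$ fails; hence $\SD{A}=\emptyset$. For $H_4$ I would observe that $\G(H_4)=\{1,g\}$ has a single element of order $2$, namely $g$, which is non-central because $gx=-xg$; and that $\G(H_4^\circ)$, the characters of $H_4$, likewise has a single non-trivial element $\alpha$ (determined by $\alpha(g)=-1$, $\alpha(x)=0$), which has order $2$ and satisfies $\alpha(g)=-1$. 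A direct check on the generators $g,x$ then gives $\alpha\hits a\hitted\alpha=gag$, so $(g,\alpha)$ is the \emph{unique} element of $\SD{H_4}$. A single super-datum yields a single orbit, and since $\bigwedge(\kk)$ is readily seen to be a dimension-$2$ Hopf superalgebra with non-zero $\bar1$-part and $\widehat{\bigwedge(\kk)}\cong H_4$, the unique isomorphism class is that of $\bigwedge(\kk)$.

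For the odd prime case the candidates in dimension $2p$ are $\kk\Z_{2p}$, the group algebra $\kk D$ of the dihedral group $D$ of order $2p$, and its dual $(\kk D)^\circ$. Again the two commutative algebras $\kk\Z_{2p}$ and $(\kk D)^\circ$ contribute no super-datum, since $g\notin Z(A)$ cannot hold. The only delicate case is $A=\kk D$. Here the elements of order $2$ in $\G(\kk D)=D$ are exactly the $p$ reflections, and as $Z(D)$ is trivial for odd $p$ each such $g$ is non-central, so the condition $g\notin Z(A)$ is in fact \emph{satisfied}. The obstruction comes instead from the second condition: because $D$ has abelianization $\Z_2$, the only order-$2$ character is the sign character $\sgn$, and for any grouplike $h$ one computes $\sgn\hits h\hitted\sgn=h$ (the two $\pm1$ factors cancel), so $\alpha\hits a\hitted\alpha=gag$ would force $ghg=h$ for all $h\in D$, i.e.\ $g\in Z(D)$ --- contradicting that $g$ is a non-central reflection. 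Hence $\SD{\kk D}=\emptyset$ as well, there is no Hopf superalgebra of dimension $p$ with non-zero $\bar1$-part, and every Hopf superalgebra of dimension $p$ is purely even; being then an ordinary Hopf algebra of prime dimension $p$, it is $\kk\Z_p$ by Zhu \cite{Zhu94}.

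The main obstacle I anticipate is not the classification inputs, which are quoted, but the dihedral group algebra $\kk D$: it is the one Hopf algebra on the list where the central-element condition $g\notin Z(A)$ passes, so the argument must rest squarely on the compatibility condition $\alpha\hits a\hitted\alpha=gag$. Pinning down the hit actions and the resulting cancellation of signs exactly right --- so that the condition reduces cleanly to centrality of $g$ in $D$ --- is the crux of the proof.
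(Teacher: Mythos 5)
Your proposal is correct. For the two-dimensional statement it is essentially the paper's own argument: reduce via the bosonization to the classification in dimension $4$, discard the commutative candidates, and verify that $\SD{H_4}$ is a singleton; your check of the compatibility condition on the generators alone is legitimate because $\alpha\hits(-)\hitted\alpha$ and $a\mapsto gag$ are both algebra maps (as the paper itself notes). For the odd-prime statement, however, you take a genuinely different route. The paper's proof of Theorem~\ref{thm:main1} never opens the super-datum machinery at all: it quotes Masuoka and Ng to get $\widehat{\HH}\cong\kk\Gamma$ or $(\kk\Gamma)^*$ for some finite group $\Gamma$, hence cocommutative or commutative, and concludes $\HH_{\bar1}=0$ in one line from Lemma~\ref{prp:boson=non-triv}. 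You instead compute $\SD{\kk D}=\emptyset$ directly for the dihedral group algebra: the only order-two character is $\sgn$, and since $\sgn\hits h\hitted\sgn=\sgn(h)^2h=h$ on the group basis (linearity reduces the condition to that basis), the requirement $\alpha\hits a\hitted\alpha=gag$ forces $g\in Z(D)$, contradicting the non-centrality condition. This computation is correct, and it in fact generalizes verbatim: for any cocommutative $A$ and any $\alpha\in\G(A^\circ)$ of order two one has $\alpha\hits a\hitted\alpha=(\alpha*\alpha)(a_{(1)})\,a_{(2)}=\varepsilon(a_{(1)})\,a_{(2)}=a$, so $\SD{A}=\emptyset$; your argument is thus a super-datum-level reproof of the cocommutative half of Lemma~\ref{prp:boson=non-triv}. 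What the paper's route buys is brevity and uniformity (the commutative and cocommutative candidates die by the same lemma); what yours buys is an explicit, self-contained picture of precisely which of the two super-datum conditions fails for the one non-commutative candidate $\kk D$.
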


We fix a primitive fourth root $\zeta_4\in\kk$ of unity.
By the above theorem and the classification results of Hopf algebras of dimension $4$ obtained by Masuoka~\cite{Mas95b} and \c{S}tefan~\cite{Ste99},
we classify Hopf superalgebras of dimension up to $5$ and determine their duals as follows.
\begin{theorem}[= Theorems~\ref{thm:ShiShiWak2}, \ref{thm:4ss} and \ref{thm:A_4-self-dual}]
Let $\HH$ be a Hopf superalgebra of dimension less than or equal to $5$.
If $\HH_{\bar1}\neq0$, then $\HH$ is isomorphic to one of the following Hopf superalgebras.
\begin{center}\footnotesize
\begin{tabular}{|c||l|l|l|}
\hline
\bf dim & \tabb{\bf Hopf superalgebras $\HH$ with $\HH_{\bar1}\neq0$} & \tabb{\bf structures} & \tabb{\bf notes} \\
\hline
\hline
$2$ & \tabb{$\bigwedge(z)=\kk\langle z \mid z^2=0\rangle$\\ with $|z|=1$} & \tabb{$z$: odd primitive} & \tabb{pointed\\ self-dual} \\
\hline
$3$ & \tabb{None} & & \\
\hline
    & \tabb{$\HH_4^{(1)}=\bigwedge(z_1,z_2)=\kk\langle z_1,z_2 \mid z_iz_j=-z_jz_i\rangle$\\ with $|z_1|=|z_2|=1$} & \tabb{$z_1,z_2$: primitive} & \tabb{pointed\\ self-dual} \\ \cline{2-4}
    & \tabb{$\HH_4^{(2)}=\kk\langle g,z \mid g^2=1,z^2=0,gz=zg \rangle$\\ with $|g|=0,|z|=1$} & \tabb{$g$: group-like\\ $z$: odd primitive} & \tabb{pointed\\ self-dual} \\ \cline{2-4}
$4$ & \tabb{$\HH_4^{(3)}=\kk\langle g,z \mid g^2=1,z^2=0,gz=zg \rangle$\\ with $|g|=0,|z|=1$} & \tabb{$g$: group-like\\ $z$: odd $g$-skew primitive} & \tabb{pointed\\ $(\HH_4^{(3)})^*$\\ $\cong \HH_4^{(4)}$} \\ \cline{2-4}
    & \tabb{$\HH_4^{(4)}=\kk\langle g,z \mid g^2=1,z^2=0,gz=-zg \rangle$\\ with $|g|=0,|z|=1$} & \tabb{$g$: group-like\\ $z$: odd primitive} & \tabb{pointed\\ $(\HH_4^{(4)})^*$\\ $\cong \HH_4^{(3)}$} \\ \cline{2-4}
    & \tabb{$\A_4(\zeta_4), \A_4(-\zeta_4)$} &
\tabb{Example~\ref{ex:AM}} & \tabb{semisimple\\ self-dual} \\
\hline
$5$ & \tabb{None} & & \\
\hline
\end{tabular}
\end{center}
Note that $\HH_4^{(2)}\cong\kk\Z_2\otimes\bigwedge(z)$ and $\A_4(\zeta_4)\not\cong\A_4(-\zeta_4)$.
\end{theorem}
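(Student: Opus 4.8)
The plan is to run everything through the bijection recalled in the introduction. For a fixed dimension $d$, classifying the Hopf superalgebras $\HH$ with $\HH_{\bar1}\neq0$ is the same as listing, up to isomorphism, all ordinary Hopf algebras $A$ with $\dim A=2d$ (since $\dim\widehat{\HH}=2\dim\HH$), computing the set of super-data $\SD{A}$ for each, and taking $\HopfAuto(A)$-orbits: by the key observation these orbits are in bijection with the desired isomorphism classes, and $\HH$ is pointed (resp.\ semisimple) precisely when $A$ is. Because a super-datum $(g,\alpha)$ requires $\ord(g)=2$ and, crucially, $g\notin Z(A)$, only noncommutative $A$ can contribute; all commutative $A$---in particular the duals of group algebras---are discarded immediately.

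The rows $d\in\{2,3,5\}$ fall out of the first theorem above. Here $\dim A\in\{4,6,10\}$, with $6$ and $10$ of the form $2p$ ($p$ an odd prime); the cited classifications of Masuoka and Ng feed into that theorem, which already records that dimensions $3$ and $5$ support no Hopf superalgebra with nonzero odd part and that $\bigwedge(z)$ is the unique dimension-$2$ example. Hence the real content is the single row $d=4$, where $\dim A=8$, and I would treat the semisimple and the pointed cases separately.

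In the semisimple case the candidates collapse fast. Since $g\notin Z(A)$ forces $A$ to be noncommutative, and the only noncommutative semisimple Hopf algebras of dimension $8$ are $\kk D_4$, $\kk Q_8$ and the Kac--Paljutkin algebra, of which the first two are cocommutative: such a cocommutative semisimple $A$ would make $\HH$ super-cocommutative, semisimple and with $\HH_{\bar1}\neq0$, which Kostant's structure theorem forbids, so $\kk D_4$ and $\kk Q_8$ drop out. Only the Kac--Paljutkin algebra survives, and its group-likes and characters are few enough that $\SD{A}$ can be written down by hand; the computation is expected to produce exactly two $\HopfAuto(A)$-orbits, giving the self-dual semisimple superalgebras $\A_4(\zeta_4)$ and $\A_4(-\zeta_4)$ of Example~\ref{ex:AM}.

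For the pointed nonsemisimple case I would take the known list of pointed Hopf algebras of dimension $8$, throw away the commutative members, and for each remaining $A$ determine $\G(A)$, $\G(A^\circ)$ and the subset $\SD{A}$ of $\AD{A}$ cut out by the super-condition $\alpha\hits a\hitted\alpha=gag$, then count orbits. This should return exactly the four presentations $\HH_4^{(1)},\dots,\HH_4^{(4)}$, the distinction between a primitive and a genuinely $g$-skew-primitive odd generator being governed by the $\kk\Z_2$-coaction encoded in $\alpha$. The duality statements come for free from the compatibility of bosonization with duals: passing to $A^*$ interchanges $\G(A)$ with $\G(A^\circ)$ and carries a super-datum $(g,\alpha)$ to $(\alpha,g)$, so self-duality (for $\HH_4^{(1)}$, $\HH_4^{(2)}$ and the $\A_4(\pm\zeta_4)$) or the dual pairing $\HH_4^{(3)}\leftrightarrow\HH_4^{(4)}$ is read off according to whether this swap fixes the orbit. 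I expect the main obstacle to be the semisimple step: proving that the two super-data over the Kac--Paljutkin algebra really lie in distinct orbits---equivalently that $\A_4(\zeta_4)\not\cong\A_4(-\zeta_4)$---needs enough control of its Hopf automorphism group to verify that no automorphism can absorb the change of root $\zeta_4\mapsto-\zeta_4$, which is exactly the subtlety the notation is flagging.
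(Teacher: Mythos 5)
Your proposal is correct and follows the paper's strategy in all essentials: reduce via the bosonization bijection to computing $\SD{A}/\!\sim$ for the Hopf algebras $A$ of dimension $2d$, dispose of $d\in\{2,3,5\}$ through the dimension-$2p$ classifications of Masuoka and Ng, and split $\dim A=8$ into the semisimple case (only the Kac--Paljutkin algebra $\AM_8$ survives, yielding the two orbits $(X,\alpha_+),(X,\alpha_-)$ and hence $\A_4(\pm\zeta_4)$, with $\HopfAuto(\AM_8)$ needed exactly as in Lemma~\ref{prp:sim}) and the pointed case (\c{S}tefan's list cut down to $A_{C_2}$ and $A_{C_2\times C_2}$, yielding $\HH_4^{(1)},\dots,\HH_4^{(4)}$). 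Two local deviations are worth noting. First, to eliminate the cocommutative semisimple candidates $\kk D_4,\kk Q_8$ you route through Kostant's structure theorem; this works, but it silently uses the implication that a cocommutative bosonization forces $\HH$ super-cocommutative, which itself requires a short computation with $\Delta_{\widehat{\HH}}$ --- the paper's Lemma~\ref{prp:boson=non-triv} (equivalently Proposition~\ref{prp:summary}\eqref{prp:summary:(2)}) gives the same exclusion directly and is the cleaner citation. Second, for the duality column you propose reading everything off the swap $\AD{A}\to\AD{A^*}$, $(g,\alpha)\mapsto(\alpha,g)$, together with $\widehat{\HH^*}\cong(\widehat{\HH})^*$ (the paper's Lemma~\ref{prp:dual}); this is a legitimate and arguably more conceptual route, but note that it does not come quite ``for free'': you must exhibit an explicit self-duality isomorphism $A\cong A^*$ for each bosonization ($\AM_8$, $A_{C_2}$, $A_{C_2\times C_2}$) and track which orbit of $\SD{A}$ the transported datum lands in, which is computational work comparable to the paper's method of simply writing down nondegenerate Hopf pairings as in Example~\ref{ex:AM-duals} and Section~\ref{subsubsec:Mas3}. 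With those two points patched, your outline matches the paper's proof, including your correct identification of the one genuine subtlety: that $(X,\alpha_+)\not\sim(X,\alpha_-)$ over $\AM_8$, i.e.\ $\A_4(\zeta_4)\not\cong\A_4(-\zeta_4)$, which the paper settles both by the automorphism-group computation and by comparing eigenvalues of the antipodes.
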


In our forthcoming papers, we will give a complete list of Hopf superalgebras of dimension up to $10$.
We will not address the classification of Hopf superalgebras of dimensions higher than $5$ in this paper, but, provide new and intriguing examples.
One example arises from the classification result of semisimple Hopf algebras of dimension $12$ by Fukuda~\cite{Fuk97}. Using his result, we prove:
\begin{theorem}[= Theorem~\ref{thm:H_6^5}] \label{thm:intro3}
A semisimple Hopf superalgebra $\HH$ of dimension $6$ with $\HH_{\bar{1}} \ne 0$ is isomorphic to the self-dual Hopf superalgebra $\A_6$ (see Proposition~\ref{prp:H_6^5} for the definition).
\end{theorem}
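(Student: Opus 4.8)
The plan is to invert the bosonization. If $\HH$ is semisimple of dimension $6$ with $\HH_{\bar{1}}\neq0$, then its bosonization $A:=\widehat{\HH}$ is a semisimple Hopf algebra of dimension $2\cdot 6=12$, bosonization by $\kk\Z_2$ doubling the dimension and preserving semisimplicity. By the bijection recorded in the introduction between isomorphism classes of Hopf superalgebras $\HH$ with $\HH_{\bar1}\neq0$ and $\widehat{\HH}\cong A$ on the one hand, and the $\HopfAuto(A)$-orbits of super-data $\SD{A}$ on the other, the classification reduces to running through the semisimple Hopf algebras $A$ of dimension $12$ and determining $\SD{A}$ up to isomorphism. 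At this point I would invoke Fukuda's classification \cite{Fuk97} of these algebras.

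Next I would cut down the list using the two defining constraints of a super-datum. The condition $g\notin Z(A)$ excludes every commutative $A$, in particular all function algebras $\kk^G$. The hit-conjugation identity $\alpha\hits a\hitted\alpha=gag$ excludes every cocommutative $A$, i.e.\ every group algebra $\kk G$: on a group-like $h$ the left-hand side collapses to $\alpha(h)^2 h=h$ (since $\ord(\alpha)=2$), so by linearity the identity forces $a=gag$ for all $a\in A$, that is $g\in Z(A)$, contradicting $g\notin Z(A)$. Hence a super-datum can only live on a semisimple Hopf algebra of dimension $12$ that is neither commutative nor cocommutative, and I would restrict attention to the nontrivial members of Fukuda's list.

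For such an $A$ I would write down $\G(A)$, $\G(A^\circ)$ and $Z(A)$ explicitly and then solve for all pairs $(g,\alpha)$ of order-$2$ group-likes with $\alpha(g)=-1$, $g\notin Z(A)$ satisfying the hit-identity, thereby computing $\SD{A}$. The step I expect to be the main obstacle is the determination of the $\HopfAuto(A)$-orbits: it requires a concrete description of $\HopfAuto(A)$ and of how it permutes the order-$2$ group-likes of $A$ and of $A^\circ$ compatibly with the pairing $\alpha(g)=-1$, and the goal is to show that $\SD{A}$ forms a single orbit. A single orbit produces, through the inverse bosonization, exactly one Hopf superalgebra, which I would then identify with the explicit $\A_6$ of Proposition~\ref{prp:H_6^5} by matching generators, relations and super-coalgebra structure; semisimplicity of $\A_6$ is inherited from that of $A$, and the self-duality $\A_6^*\cong\A_6$ is the one recorded in Proposition~\ref{prp:H_6^5}, reflecting on the level of data the interchange $(g,\alpha)\mapsto(\alpha,g)$ under $A\cong A^\circ$.
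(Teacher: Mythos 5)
Your proposal is correct and follows essentially the same route as the paper: bosonize to a semisimple Hopf algebra of dimension $12$, invoke Fukuda's classification, discard the commutative and cocommutative cases (your direct collapse of $\alpha\hits a\hitted\alpha$ on group-like elements is a valid data-level substitute for the paper's Lemma~\ref{prp:boson=non-triv} via Proposition~\ref{prp:summary}), and reduce to computing $\SD{A_+}/\!\sim$, which the paper shows is the single orbit $[(\z,\alpha_3)]$ before identifying the coinvariant subalgebra with $\A_6$. The only minor difference is that the paper eliminates the remaining nontrivial candidate $A_-$ a priori via Proposition~\ref{prp:summary}, since $\G(A_-)\cong\Z_4$ does not decompose as a direct product with $\Z_2$, whereas your plan would discover $\SD{A_-}=\emptyset$ by direct computation.
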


Hopf algebras of dimension $16$ has been classified by Garc\'ia and Vay in~\cite{GarVay10}.
According to their result, a non-semisimple non-pointed self-dual Hopf algebra of this dimension is isomorphic to one of two Hopf algebras given in C\u{a}linescu, D\u{a}sc\u{a}lescu, Masuoka and Menini~\cite{CalDasMasMen04}.
In response, we obtain:
\begin{theorem}[= Theorem~\ref{thm:K_8}] \label{thm:intro4}
A non-semisimple Hopf superalgebra $\HH$ of dimension $8$ such that neither $\HH$ nor $\HH^*$ is pointed and $\HH_{\bar{1}} \ne 0$ is isomorphic to either one of pairwise non-isomorphic eight Hopf superalgebras
\[
\mathcal{K}_8(\zeta;\epsilon,\eta) \qquad (\zeta\in\{\zeta_4,-\zeta_4\},\;\epsilon,\eta\in \{0,1\})
\]
given in Proposition~\ref{prp:K_8}.
Moreover, for each $\zeta\in\{\zeta_4,-\zeta_4\}$, we have
\[
\mathcal{K}_8(\zeta;0,0)^* \cong \mathcal{K}_8(\zeta;0,0),\;\;
\mathcal{K}_8(\zeta;1,1)^* \cong \mathcal{K}_8(\zeta;1,1),\;\;
\mathcal{K}_8(\zeta;0,1)^* \cong \mathcal{K}_8(\zeta;1,0)
\]
as Hopf superalgebras.
\end{theorem}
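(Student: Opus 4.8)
The plan is to run the Radford--Majid correspondence in reverse. By the main bijection recalled in the introduction, a Hopf superalgebra $\HH$ of dimension $8$ with $\HH_{\bar1}\neq0$ corresponds to a single $\HopfAuto(A)$-orbit in the set of super-data $\SD{A}$, where $A=\widehat{\HH}$ is an ordinary Hopf algebra of dimension $16$. The first step is to translate the hypotheses on $\HH$ into hypotheses on $A$. Since bosonization preserves semisimplicity and pointedness, and is compatible with duality in the sense that $\widehat{\HH^*}\cong A^*$, the assumption that $\HH$ be non-semisimple with neither $\HH$ nor $\HH^*$ pointed is equivalent to requiring that $A$ be non-semisimple with neither $A$ nor $A^*$ pointed.

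The second step is to pin down the admissible $A$. Using the classification of $16$-dimensional Hopf algebras due to Garc\'ia and Vay \cite{GarVay10}, the non-semisimple ones for which both $A$ and $A^*$ fail to be pointed and which additionally carry a super-datum are exactly the two self-dual Hopf algebras of C\u{a}linescu, D\u{a}sc\u{a}lescu, Masuoka and Menini \cite{CalDasMasMen04}; these are distinguished by the choice of a primitive fourth root of unity $\zeta\in\{\zeta_4,-\zeta_4\}$. Thus the classification reduces to computing $\SD{A}$ and its orbit space for each of these two algebras.

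This last computation is the technical heart of the argument and the step I expect to be the main obstacle. For each such algebra $A$ one must describe the group-likes $\G(A)$ and $\G(A^\circ)$, select the order-two elements $g$ and $\alpha$ with $\alpha(g)=-1$, and verify the two defining conditions of a super-datum---the intertwining relation $\alpha\hits a\hitted\alpha=gag$ for all $a\in A$ and the non-centrality $g\notin Z(A)$. One then computes $\HopfAuto(A)$, lets it act on $\SD{A}$, and extracts orbit representatives; I expect exactly four orbits for each value of $\zeta$, naturally labelled by a pair $(\epsilon,\eta)\in\{0,1\}^2$. Unwinding the bosonization of each orbit representative into explicit generators and relations then produces the eight Hopf superalgebras $\mathcal{K}_8(\zeta;\epsilon,\eta)$ of Proposition~\ref{prp:K_8}, and their pairwise non-isomorphism follows from the bijection, since distinct orbits---and, across the two underlying bosonizations, distinct $\zeta$---give non-isomorphic $\HH$.

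Finally, for the duality statement the plan is to track how dualization acts on super-data: passing to $\HH^*$ sends $(g,\alpha)\in\SD{A}$ to $(\alpha,g)\in\SD{A^*}$, and transporting this back through a fixed self-duality $A^*\cong A$ yields an involution of the orbit set. Computing its effect on the labels should show that it preserves $\zeta$, fixes the orbits $(0,0)$ and $(1,1)$, and interchanges $(0,1)$ with $(1,0)$, which is exactly the asserted pattern of self-dual and mutually dual members of the family.
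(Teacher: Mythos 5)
Your proposal is correct and, for the classification part, follows the paper's route exactly: pass to $A=\widehat{\HH}$, use Lemmas~\ref{prp:pt/ss} and \ref{prp:dual} to transfer non-semisimplicity and the two non-pointedness hypotheses, invoke Garc\'ia--Vay \cite{GarVay10} to identify $A$ with one of the two CDMM algebras $H_{16}(\pm\zeta_4)$ of \cite{CalDasMasMen04}, and then compute $\SD{H_{16}}$ and its orbits. (In fact the Garc\'ia--Vay classification already isolates these two algebras without your extra proviso that $A$ ``carry a super-datum''; and the paper finds $\AD{H_{16}}=\SD{H_{16}}$ consists of exactly the four pairs $(X,\alpha_{\pm})$, $(Y,\alpha_{\pm})$, with pairwise inequivalence proved by the one clean structural point you should note: any $\varphi\in\HopfAuto(H_{16})$ restricts to the coradical $H_8$, and of $\HopfAuto(H_8)=\{\id,\psi,\phi,\psi\phi\}$ only $\id$ and $\phi$ extend, so the automorphism group is too small to merge any of the four data --- confirming your expected count of four orbits per $\zeta$.) The one place you genuinely diverge is the duality statement: the paper verifies it by writing down explicit non-degenerate Hopf pairings between the $\mathcal{K}_8$'s (e.g.\ the displayed table giving $\mathcal{K}_8(\zeta;0,1)\times\mathcal{K}_8(\zeta;1,0)\to\kk$), in the same style as its self-duality computation for $\A_4(\pm\zeta_4)$, whereas you propose transporting the bijection $\AD{A}\to\AD{A^*}$, $(g,\alpha)\mapsto(\alpha,g)$, through a fixed self-duality $H_{16}(\zeta)^*\cong H_{16}(\zeta)$ and reading off the induced involution on orbit labels. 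Your route is the more conceptual one and does work, but it is not obviously cheaper: to decide whether the involution fixes or swaps the labels $(\epsilon,\eta)$ you must make the self-duality isomorphism explicit on $X$, $Y$, $Z$, $T$, which is computational effort comparable to exhibiting the pairings directly; the paper's tables have the added benefit of immediately displaying the dual pairs. No gap --- only the orbit computation and the explicit presentations remain to be carried out, exactly as you flagged.
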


\subsection*{Organization of the paper}
The paper is organized as follows.
In Section~\ref{sec:boson},
we review the definition and properties of Yetter-Drinfeld modules (Section~\ref{subsec:YD}) and
the bosonization technique (Section~\ref{subsec:boson}) introduced by Majid \cite{Maj94} and Radford \cite{Rad85}.

In Section~\ref{sec:Hopf-super},
we review the definition and properties of Hopf superalgebras.
We first see that the category of superspaces is naturally embedded
in the category of left $\kk\Z_2$-Yetter-Drinfeld category $\YD{\kk\Z_2}$ (Section~\ref{subsec:super-symmet}).
We next see some examples of Hopf superalgebras (Section~\ref{subsec:Hopf-super}).
We recall from \cite{AndEtiGel01} a construction of Hopf superalgebras (Section~\ref{subsub:AEG}).
For a finite-dimensional Hopf superalgebra,
as in the non-super situation, we see that its dual superspace becomes a finite-dimensional Hopf superalgebra again,
called the {\it dual Hopf superalgebra} (Section~\ref{subsec:duals}).

In Section~\ref{sec:bos},
we apply the Radford-Majid bosonization to Hopf superalgebras and study its properties (Section~\ref{subsec:boson-super}).
We say that a Hopf superalgebra $\HH$ is a {\it super-form} of a Hopf algebra $A$
if the bosonization $\widehat{\HH}$ of $\HH$ is isomorphic to $A$ as Hopf algebras (Definition~\ref{def:super-form}).
For a given Hopf algebra $A$,
in Section~\ref{subsec:YD-data}, we introduce the notion of {\it admissible data} for $A$ (Definition~\ref{def:yddata})
and show that there is one-to-one correspondence between the isomorphism classes of the set of all admissible data for $A$ and
the set of all isomorphism classes of Hopf algebras $\HH$ in $\YD{\kk\Z_2}$ such that the bosonization of $\HH$ is isomorphic to $A$ (Proposition~\ref{prp:one-to-one}).
In Section~\ref{subsec:coinv-sub},
we give a criterion for such $\HH$ to be a Hopf superalgebra (Propositions~\ref{prp:super-criteria1} and \ref{prp:super-criteria2}).
If an admissible datum for $A$ satisfies the conditions given in Propositions~\ref{prp:super-criteria1} and \ref{prp:super-criteria2},
we call it a {\it super-datum} for $A$ (Definition~\ref{def:SD}).

In the final Section~\ref{sec:class},
as a demonstration of our method,
we classify Hopf superalgebras of dimension up to $5$ (Sections~\ref{subsec:general}, \ref{subsec:pt-4-dim} and \ref{subsec:Mas}).
The proofs of Theorems~\ref{thm:intro3} and \ref{thm:intro4} are given in Section~\ref{subsec:Fuk} and in Section~\ref{subsec:CDMM}, respectively.

\subsection*{Acknowledgments}
We thank the organizers, Siu-Hung~Ng and Susan~Montgomery, of
``the AMS Special Session on Quantum Groups, Hopf algebras and Applications:
In honor of Professor Earl J.~Taft''
held at University of Utah for giving us an opportunity to present the results.
We also thank Kenichi Shimizu for his helpful comments.
The first author is supported by Japan Society of the Promotion of Science (JSPS) KAKENHI Grant Number JP22K13905.
We are grateful for the referee's careful reading and valuable comments.
Thanks to them, we were able to make some of the proofs more conceptual.

\section{The Radford-Majid bosonization} \label{sec:boson}
Throughout this paper, we work over a filed $\kk$.
In Sections~\ref{sec:Hopf-super} and \ref{sec:bos}, we suppose $\kk$ to be of characteristic $\neq2$.
In Section~\ref{sec:class}, $\kk$ is supposed to be an algebraically closed field of characteristic zero.
The unadorned $\otimes$ is the tensor product $\otimes_{\kk}$ over $\kk$.

In this section, we review the bosonization (biproduct) technique introduced by Radford \cite{Rad85} and Majid \cite{Maj94}.
In the following, we fix a Hopf algebra $H=(H;m_H,u_H,\Delta_H,\varepsilon_H,S_H)$,
where $m_H$, $u_H$, $\Delta_H$, $\varepsilon_H$ and $S_H$
are the multiplication, the unit, the comultiplication, the counit and the antipode of $H$, respectively.
As usual, we denote the unit of $H$ by $1_H:=u_H(1)$ and we sometimes write $1_H$ just $1$ (omitting the subscript $H$) for simplicity.
We freely use the Heyneman-Sweedler notation such as $\Delta_H(h)=h_{(1)}\otimes h_{(2)}$ for $h\in H$.
Let $H^+$ denote the kernel of the counit $\varepsilon_H$ of $H$.

\subsection{Coinvariant subspaces} \label{subsec:coinv}
A right $H$-module $M$ is called a {\it right $H$-Hopf module} if
$M$ is simultaneously a right $H$-comodule satisfying
$(m.h)_{(0)}\otimes (m.h)_{(1)} = m_{(0)}.h_{(1)} \otimes m_{(1)} h_{(2)}$
for all $h\in H,m\in M$.
Here, $m.h$ denotes the right $H$-action of $h\in H$ on $m\in M$
and $M\to M\otimes H; m\mapsto m_{(0)}\otimes m_{(1)}$ denotes the right $H$-coaction on $M$.
Let $\rhmod{H}$ (resp.~$\Vect$) denote the category of all right $H$-Hopf modules (resp.~all vector spaces).
For $V\in\Vect$, we naturally regard $V\otimes H$ as a right $H$-Hopf module (by $\id_V\otimes m_H$ and $\id_V\otimes\Delta_H$),
and hence we get a functor $\Vect\to \rhmod{H}; V\mapsto V\otimes H$.

Let $M$ be a a right $H$-Hopf module.
We denote the {\it $H$-coinvariant subspace} of $M$ by $M^{\co(H)} := \{ m\in M \mid m_{(0)}\otimes m_{(1)} = m\otimes 1_H \}$.
One sees that the map
\begin{equation} \label{eq:one}
M^{\co(H)} \otimes H \overset{\cong}\longrightarrow M;\quad
v\otimes h \longmapsto v.h
\end{equation}
is a natural isomorphism in $\rhmod{H}$ whose inverse is given by $m\mapsto E_M(m_{(0)})\otimes m_{(1)}$,
where
\begin{equation} \label{eq:zero}
E_M : M \longrightarrow M^{\co(H)};
\quad m \longmapsto m_{(0)}. S_H(m_{(1)}).
\end{equation}
This shows that the functor $\rhmod{H}\to \Vect; M\mapsto M^{\co(H)}$ is an equivalence whose quasi-inverse is
$\Vect\to\rhmod{H}; V\to V\otimes H$ (the fundamental theorem of Hopf modules).

Let $M$ be a right $H$-Hopf module.
We set $\overline{M}^{H}:=M/MH^+$ and denote the canonical quotient map by $\overline{(-)}:M\to \overline{M}^H; m\mapsto \overline{m}$.
One sees that there is another equivalence $\rhmod{H}\approx \Vect$ given by $M\mapsto \overline{M}^{H}$.
The associated natural isomorphism in $\rhmod{H}$ is given by
\begin{equation} \label{eq:two}
M \overset{\cong}\longrightarrow \overline{M}^H\otimes H;\quad
m \longmapsto \overline{m_{(0)}}\otimes m_{(1)}.
\end{equation}
Indeed, the inverse is given as $\overline{m}\otimes h \mapsto E_M(m).h$.

A natural isomorphism between the two equivalences is given by
\begin{equation} \label{eq:three}
M^{\co(H)} \overset{\cong}\longrightarrow \overline{M}^H; \;\; v\longmapsto \overline{v}
\quad\text{and}\quad
\overline{M}^H \overset{\cong}\longrightarrow M^{\co(H)}; \;\; \overline{m} \longmapsto E_M(m)
\end{equation}
which are inverse of each other.
If we identify $M^{\co(H)}$ and $\overline{M}^H$ through these isomorphisms,
then \eqref{eq:one} and \eqref{eq:two} are seen to be inverses of each other.

\subsection{Yetter-Drinfeld categories} \label{subsec:YD}
In this section we suppose that the antipode $S_H$ of $H$ is bijective.
A left $H$-module $V$ is called a {\it left $H$-Yetter-Drinfeld module} if
$V$ is simultaneously a left $H$-comodule satisfying the following compatible condition.
\[
(h.v)_{(-1)}\otimes (h.v)_{(0)} = h_{(1)}v_{(-1)}S_H(h_{(3)}) \otimes h_{(2)}.v_{(0)}
\quad (h\in H, v\in V).
\]
Here, $h.v$ denotes the left $H$-action of $h\in H$ on $v\in V$
and $V\to H\otimes V; v \mapsto v_{(-1)}\otimes v_{(0)}$ denotes the left $H$-coaction on $V$.
Let $\YD{H}$ denote the category of all left $H$-Yetter-Drinfeld modules.

The category $\YD{H}$ has a structure of a monoidal category inherited from the category of left $H$-modules and that of left $H$-comodules.
Moreover, one sees that $\YD{H}$ forms a braided monoidal category with braiding
\begin{equation} \label{eq:YD-braiding}
c_{V,W} :V\otimes W\longrightarrow W\otimes V;
\quad
v\otimes w\longmapsto v_{(-1)}.w \otimes v_{(0)} \quad (V,W\in\YD{H}).
\end{equation}

Therefore, we may consider {\it (co)algebras in $\YD{H}$} and {\it Hopf algebras in $\YD{H}$}.
Note that, in some literature, a Hopf algebra in $\YD{H}$ is simply called a {\it braided Hopf algebra} (if $H$ is obvious).
For latter use,
we let $\YDH{H}$ denote the category consisting of all Hopf algebras in $\YD{H}$.

\medskip

Let $\T_H$ denote the category defined by the following.
\begin{itemize}
\item (objects) An object is a triplet $(A;\iota,\pi)$ consisting of
a Hopf algebra $A$ and Hopf algebra maps $\iota:H\to A,\pi:A\to H$ satisfying $\pi\circ\iota=\id_H$.
\item (morphisms) A morphism $(A;\iota,\pi)\to (A';\iota',\pi')$ is a Hopf algebra map $\varphi:A\to A'$
which satisfies $\varphi\circ\iota=\iota'$ and $\pi'\circ \varphi=\pi$.
\end{itemize}
In \cite{CalDasMasMen04}, an object in $\T_H$ is called a {\it Hopf algebra triple} over $H$.

Let $(A;\iota,\pi)$ be an object in $\T_H$.
We denote the Hopf algebra structure of $A$ by $A=(A;m_A,u_A,\Delta_A,\varepsilon_A,S_A)$.
Then $m_A\circ(\id_A\otimes \iota)$ and $(\id_A\otimes\pi)\circ\Delta_A$ make $A$ into an object in $\rhmod{H}$.

Let $\B$ be the $H$-coinvariant subspace $A^{\co(H)}$ of $A$.
It is easy to see that $\B$ is a subalgebra of $A$ and $\B$ is a left $H$-comodule via $(\pi\otimes\id_A)\circ\Delta_A$,
and hence $\B$ becomes a left $H$-comodule algebra.
Since $AH^+$ is a coideal of $A$, the quotient $\overline{A}^H\;(=A/AH^+)$ is naturally a coalgebra
and $\overline{A}^H$ is a left $H$-module via $H\otimes \overline{A}^H\to \overline{A}^H; h\otimes\overline{a}\mapsto \overline{\iota(h)a}$.
In this way, $\overline{A}^H$ becomes a left $H$-module coalgebra.

Through the identification \eqref{eq:three},
we see that the left $H$-comodule algebra $\B\;(=\overline{A}^H)$ is simultaneously a left $H$-module coalgebra.
One sees that the induced left $H$-module structure on $\B$ coincides with the adjoint action
\[
H\otimes \B \longrightarrow \B;\quad
h\otimes b \longmapsto h\triangleright b := \iota(h_{(1)}) b \iota(S_H(h_2))
\]
and the induced coalgebra structure $\Delta_\B$ on $\B$ is
\[
\Delta_\B :\B\longmapsto \B\otimes \B;\quad
b\longmapsto E_A(b_{(1)})\otimes b_{(2)} = b_{(1)} (\iota\circ S_H\circ\pi)(b_{(2)})\otimes b_{(3)},
\]
where $E_A$ is given by \eqref{eq:zero}.
Here, we write $\Delta_A(b)=b_{(1)}\otimes b_{(2)}$ for $b\in \B\;(\subset A)$.
A direct calculation shows that $\Delta_\B$ is an algebra map in $\YD{H}$.
Moreover, we have the following.
\begin{proposition}\label{prp:coinv-sub}
The left $H$-comodule algebra $\B=A^{\co(H)}$ becomes an object in $\YDH{H}$ by the following structures.
\begin{itemize}
\item (left $H$-action) $H\otimes \B \to \B;\; h\otimes b \mapsto h\triangleright b$.
\item (comultiplication) $\Delta_{\B}:\B\to \B\otimes \B;\; b\mapsto b_{(1)}(\iota\circ S_H\circ\pi)(b_{(2)})\otimes b_{(3)}$.
\item (counit) $\varepsilon_{\B}:\B\to \kk;\; b\mapsto \varepsilon_A(b)$.
\item (antipode) $S_{\B}:\B\to \B;\; b\mapsto (\iota\circ\pi)(b_{(1)}) S_A(b_{(2)})$.
\end{itemize}
\end{proposition}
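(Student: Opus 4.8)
The plan is to check, piece by piece, the three things that make $\B$ a Hopf algebra in $\YD{H}$: that $\B$ is an object of $\YD{H}$, that each structure map is a morphism there, and that $S_\B$ is an antipode. A large part is already available from the discussion preceding the statement: $(\B, m_A|_\B, 1_A)$ is a left $H$-comodule algebra, $(\B, \Delta_\B, \varepsilon_A|_\B)$ is a left $H$-module coalgebra for the adjoint action $\triangleright$, and $\Delta_\B$ is an algebra map in $\YD{H}$; together with the trivial facts that $\varepsilon_\B=\varepsilon_A|_\B$ is an algebra map and that $\Delta_\B(1_A)=1_A\otimes 1_A$, $\varepsilon_\B(1_A)=1$, this already gives the bialgebra axioms in $\YD{H}$ once the remaining morphism conditions are in place. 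Thus only three points remain: (i) the adjoint action together with the coaction $\lambda(b)=\pi(b_{(1)})\otimes b_{(2)}$ makes $\B$ an object of $\YD{H}$; (ii) the structure maps are morphisms for the \emph{other} of their two structures (so $m_A|_\B$ is $H$-linear and $u_\B$, $\Delta_\B$, $\varepsilon_\B$ are $H$-colinear); and (iii) $S_\B$ is an antipode.

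For (i) I would verify the Yetter--Drinfeld condition by a direct Sweedler computation. Expanding $\Delta_A$ of $h\triangleright b=\iota(h_{(1)})\,b\,\iota(S_H(h_{(2)}))$ via $\Delta_A\circ\iota=(\iota\otimes\iota)\circ\Delta_H$ and $\Delta_H\circ S_H=(S_H\otimes S_H)\circ\tau\circ\Delta_H$ (with $\tau$ the flip), and then using that $\pi$ is an algebra map with $\pi\circ\iota=\id_H$, both sides of the defining identity of $\YD{H}$ collapse to
\[
h_{(1)}\,\pi(b_{(1)})\,S_H(h_{(4)})\otimes \iota(h_{(2)})\,b_{(2)}\,\iota(S_H(h_{(3)})),
\]
so the compatibility holds. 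That $\lambda$ lands in $H\otimes\B$ is the left-coideal property of $\B$ already used above.

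For (ii) most checks are immediate. The unit is harmless, since $h\triangleright 1_A=\iota\big(h_{(1)}S_H(h_{(2)})\big)=\varepsilon_H(h)1_A$ and $\lambda(1_A)=1_H\otimes 1_A$; the $H$-linearity of $m_A|_\B$ is the standard fact that the adjoint action makes a subalgebra stable under $\triangleright$ into a module algebra; and $\varepsilon_\B$ is colinear because coinvariance forces $\pi(b)=\varepsilon_A(b)1_H$ for $b\in\B$. The one substantive check is the $H$-colinearity of $\Delta_\B$: starting from $\Delta_\B(b)=E_A(b_{(1)})\otimes b_{(2)}$ and using that $\pi$ is a coalgebra map, I would compare the coaction $\lambda$ applied after $\Delta_\B$ with the codiagonal coaction on $\B\otimes\B$. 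Since all maps in sight are assembled from $\iota$, $\pi$, $S_H$, this reduces, with (i) in hand, to bookkeeping.

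Finally, for (iii) I would verify the two antipode identities $b_{(1)}\cdot S_\B(b_{(2)})=\varepsilon_\B(b)\,1_A$ and $S_\B(b_{(1)})\cdot b_{(2)}=\varepsilon_\B(b)\,1_A$, where $\Delta_\B(b)=b_{(1)}\otimes b_{(2)}$ and products are taken in $A$, and also note that $S_\B$ is $H$-linear and $H$-colinear. The first identity is short: substituting $\Delta_\B(b)=b_{(1)}(\iota S_H\pi)(b_{(2)})\otimes b_{(3)}$ and $S_\B(x)=(\iota\pi)(x_{(1)})S_A(x_{(2)})$, the inner factor collapses through $\iota\big(S_H(\pi(b_{(2)})_{(1)})\,\pi(b_{(2)})_{(2)}\big)=\varepsilon_A(b_{(2)})1_A$, leaving $b_{(1)}S_A(b_{(2)})=\varepsilon_A(b)1_A$. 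The opposite identity $S_\B(b_{(1)})\cdot b_{(2)}=\varepsilon_\B(b)1_A$ is where I expect the main obstacle: here $S_\B$ has to be unwound against the adjoint action and the coaction, and the computation genuinely uses the module-coalgebra and comodule-algebra compatibilities secured in (i)--(ii); alternatively it can be read off from $A$ being a Hopf algebra through the isomorphism \eqref{eq:one}. This last Sweedler computation is the delicate step; everything else is bookkeeping with $\iota$, $\pi$, $S_H$ and $S_A$.
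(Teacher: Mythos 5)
Your plan is essentially the paper's own route: the paper obtains the comodule-algebra and module-coalgebra structures on $\B$ exactly as you cite from the discussion preceding the statement, leaves the remaining Hopf-in-$\YD{H}$ axioms to the same direct Sweedler calculations (ultimately Radford's), and the computations you do carry out --- the Yetter--Drinfeld compatibility and the right antipode identity --- are correct as written. The one step you flag as delicate in fact collapses just like the other side: expanding $S_\B(b_{(1)})\cdot b_{(2)}$ from $\Delta_\B(b)=b_{(1)}(\iota\circ S_H\circ\pi)(b_{(2)})\otimes b_{(3)}$ yields $\iota\bigl(\pi(b_{(1)})\,S_H(\pi(b_{(4)}))\,S_H^2(\pi(b_{(3)}))\bigr)\,S_A(b_{(2)})\,b_{(5)} = \iota(\pi(b_{(1)}))\,S_A(b_{(2)})\,b_{(3)} = \iota(\pi(b)) = \varepsilon_A(b)1_A$, using $S_H(h_{(1)})h_{(2)}=\varepsilon_H(h)1_H$ together with the fact you already isolated for the counit check, namely that $\pi(b)=\varepsilon_A(b)1_H$ for $b\in\B$.
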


In the following, we simply denote the Hopf algebra $\mathcal{B}$ in $\YD{H}$ of the above proposition by $A^{\co(\iota,\pi)}$ and call it the {\it coinvariant subalgebra} of $A$ with respect to $(\iota,\pi)$.

For a morphism $\varphi:(A;\iota,\pi)\to (A';\iota',\pi')$ in $\T_H$,
one sees that the restriction of $\varphi:A\to A'$ induces a morphism $A^{\co(\iota,\pi)}\to A^{\co(\iota',\pi')}$ in $\YDH{H}$.
Thus, we get a functor
\[
\mathcal{F} : \T_H \longrightarrow \YDH{H};
\quad
(A;\iota,\pi) \longmapsto A^{\co(\iota,\pi)}.
\]

\subsection{The Radford-Majid bosonization} \label{subsec:boson}
In this section, we also suppose that the antipode $S_H$ of $H$ is bijective.

Let $\B=(\B;m_\B,u_\B,\Delta_\B,\varepsilon_\B,S_\B)$ be a Hopf algebra in $\YD{H}$.
The following result is due to \cite{Rad85,Maj94}.

\begin{theorem}
The following structure makes $\B\biprod H:=\B\otimes H$ into a Hopf algebra.
For $b,b'\in \B$ and $h,h'\in H$,
\begin{itemize}
\item (multiplication) $(b\biprod h)(b'\biprod h'):=b(h_{(1)}.b')\biprod h_{(2)}h'$,
\item (unit) $1_{\B\biprod H}:=1_\B\biprod 1_H$,
\item (comultiplication) $\Delta_{\B\biprod H}(b\biprod h):=(b_{(1)}\biprod (b_{(2)})_{(-1)}h_{(1)})\otimes((b_{(2)})_{(0)}\biprod h_{(2)})$,
\item (counit) $\varepsilon_{\B\biprod H}(b\biprod h):=\varepsilon_\B(b)\varepsilon_H(h)$,
\item (antipode) $S_{\B\biprod H}(b\biprod h):=(1_{\B}\biprod  S_H(b_{(-1)}h))(S_\B(b_{(0)})\biprod 1_{H})$.
\end{itemize}
Here, we denote $b\otimes h$ by $b\biprod h$ for $b\in \B$ and $h\in H$.
Moreover, if we set
\[
\pi_{\B\biprod H}(b\biprod h) = \varepsilon_\B(b)h
\quad\text{and}\quad
\iota_{\B\biprod H}(h) = 1_\B\biprod  h
\quad (b\in\B,h\in H),
\]
then $\B \mapsto (\B\biprod H;\iota_{\B\biprod H},\pi_{\B\biprod H})$ gives rise to a functor $\mathcal{G}: \YDH{H} \to \T_H$.
\end{theorem}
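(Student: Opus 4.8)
The plan is to read the stated structure as a smash product glued to a smash coproduct and to verify, in turn, their mutual compatibility, the antipode, and the auxiliary maps $\iota,\pi$. First I would treat the algebra structure. Because $m_\B$ is a morphism in $\YD{H}$, the object $\B$ is in particular a left $H$-module algebra, i.e. $h.(bb')=(h_{(1)}.b)(h_{(2)}.b')$ and $h.1_\B=\varepsilon_H(h)1_\B$; the multiplication $(b\biprod h)(b'\biprod h')=b(h_{(1)}.b')\biprod h_{(2)}h'$ is then precisely the smash product of the $H$-module algebra $\B$ by $H$, whose associativity and unitality are formal consequences of these two identities. Dually, since $\Delta_\B$ is a morphism in $\YD{H}$, $\B$ is a left $H$-comodule coalgebra, and $\Delta_{\B\biprod H}$ is the corresponding smash coproduct; coassociativity and counitality of $\Delta_{\B\biprod H}$, together with $\varepsilon_{\B\biprod H}=\varepsilon_\B\otimes\varepsilon_H$, follow by the argument dual to the smash product. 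That $\varepsilon_{\B\biprod H}$ is an algebra map is immediate.

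The heart of the argument is to show that $\Delta_{\B\biprod H}$ is multiplicative, and this is the single place where the full Yetter--Drinfeld compatibility enters. I would expand both
\[
\Delta_{\B\biprod H}\bigl((b\biprod h)(b'\biprod h')\bigr)
\quad\text{and}\quad
\Delta_{\B\biprod H}(b\biprod h)\,\Delta_{\B\biprod H}(b'\biprod h')
\]
and match them term by term. The left-hand side requires that $\Delta_\B$ be a braided algebra map, namely
\[
\Delta_\B(bb')=b_{(1)}\bigl((b_{(2)})_{(-1)}.b'_{(1)}\bigr)\otimes (b_{(2)})_{(0)}\,b'_{(2)},
\]
in which the braiding $c_{\B,\B}(x\otimes y)=x_{(-1)}.y\otimes x_{(0)}$ of \eqref{eq:YD-braiding} has already appeared; the right-hand side produces the same action/coaction crossings once one pushes the left $H$-coactions past the left $H$-actions coming from the smash product. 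Reconciling the two is governed exactly by the axiom $(h.v)_{(-1)}\otimes(h.v)_{(0)}=h_{(1)}v_{(-1)}S_H(h_{(3)})\otimes h_{(2)}.v_{(0)}$, used together with $H$-colinearity of $\Delta_\B$ and coassociativity in $H$. I expect this bookkeeping to be the main obstacle, since it is where the braiding, the module and comodule structures of $\B$, and the antipode $S_H$ must all cooperate.

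With $\B\biprod H$ established as a bialgebra, I would verify the antipode axiom for $S_{\B\biprod H}$ directly, checking $\id * S_{\B\biprod H}=u_{\B\biprod H}\varepsilon_{\B\biprod H}=S_{\B\biprod H}*\id$ in the convolution algebra. Here the computation collapses onto the antipode identity for $S_\B$ in $\YD{H}$ and that for $S_H$, after the same kind of manipulation of coactions and $H$-actions as above.

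Finally I would check the triple structure. The maps $\iota_{\B\biprod H}(h)=1_\B\biprod h$ and $\pi_{\B\biprod H}(b\biprod h)=\varepsilon_\B(b)h$ are Hopf algebra maps: multiplicativity of $\iota_{\B\biprod H}$ uses $h.1_\B=\varepsilon_H(h)1_\B$, multiplicativity of $\pi_{\B\biprod H}$ uses that $\varepsilon_\B$ is an $H$-linear algebra map, and comultiplicativity of $\iota_{\B\biprod H}$ uses that $1_\B$ is $H$-coinvariant; the relation $\pi_{\B\biprod H}\circ\iota_{\B\biprod H}=\id_H$ is read off directly. Hence $(\B\biprod H;\iota_{\B\biprod H},\pi_{\B\biprod H})$ lies in $\T_H$. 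Functoriality is then routine: a morphism $f\colon\B\to\B'$ in $\YDH{H}$ is $H$-linear, $H$-colinear and a bialgebra map, so $f\otimes\id_H$ is a Hopf algebra map commuting with the respective $\iota$'s and $\pi$'s, whence $\mathcal{G}(f)=f\otimes\id_H$ defines the desired functor $\mathcal{G}\colon\YDH{H}\to\T_H$.
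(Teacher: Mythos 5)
Your proposal is correct and follows the standard direct verification: smash product for the algebra structure, smash coproduct for the coalgebra structure, braided multiplicativity of $\Delta_\B$ together with the Yetter--Drinfeld compatibility axiom for the bialgebra check, a convolution-algebra computation for the antipode, and routine checks for $\iota_{\B\biprod H}$, $\pi_{\B\biprod H}$ and functoriality. The paper itself states this theorem without proof, attributing it to Radford and Majid, and your outline is precisely the argument of those cited sources, correctly locating the one genuinely nontrivial step --- multiplicativity of $\Delta_{\B\biprod H}$ --- at the Yetter--Drinfeld axiom and the colinearity of $\Delta_\B$.
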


The Hopf algebra $\B\biprod H$ constructed above is called the {\it bosonization} (or {\it biproduct}) of $\B$ by $H$.
Since $(\B\biprod H)^{\co(H)}\cong \B$, one easily sees that $\mathcal{F}\circ\mathcal{G}\simeq \id$.

\medskip

Let $(A;\iota,\pi)$ be an object in $\T_H$.
As we have seen in Section~\ref{subsec:YD}, $\B:=A^{\co(\iota,\pi)}$ has a structure of left $H$-module coalgebra,
and hence the isomorphism given in \eqref{eq:one} is now an isomorphism
\[
\B \ratimes H \overset{\cong}\longrightarrow A;\quad b\otimes h \longmapsto b\iota(h)
\]
of right $H$-comodule algebras,
where $\B \ratimes H$ is the smash product of $\B$ and $H$.
Similarly, the isomorphism given in \eqref{eq:two} is now an isomorphism
\[
A \overset{\cong}\longrightarrow \overline{A}^H \rctimes H;\quad a \longmapsto \overline{a_{(1)}} \otimes \pi(a_{(2)})
\]
of right $H$-module coalgebras,
where $\overline{A}^H \rctimes H$ is the smash coproduct of $\overline{A}^H$ and $H$.
As a result, we get a Hopf algebra isomorphism
\begin{equation} \label{eq:four}
\B \biprod  H \overset{\cong}\longrightarrow A;\quad b\otimes h \longmapsto b\iota(h)
\end{equation}
through the identification $\B=\overline{A}^H$ given in \eqref{eq:three}.
This proves $\mathcal{G}\circ\mathcal{F}\simeq \id$.

We have thus proved the following theorem,
which is a categorical reformulation of Radford's famous results~\cite[Theorem~1--3]{Rad85}.
In fact, the reformulation is due to C\u{a}linescu, D\u{a}sc\u{a}lescu, Masuoka, and Menini~\cite[Proposition~1.1]{CalDasMasMen04}
who, however, did not give a proof.
\begin{theorem}
$\mathcal{G}:\YDH{H}\to\T_H$ is an equivalence with quasi-inverse $\mathcal{F}$.
\end{theorem}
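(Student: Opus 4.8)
The plan is to deduce the statement formally from the two componentwise isomorphisms already exhibited in this section; the only genuine work left is to promote them to natural isomorphisms respecting the categorical structures of $\T_H$ and $\YDH{H}$, after which the conclusion is definitional.

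First I would record the natural isomorphism $\id_{\YDH{H}}\Rightarrow\mathcal{F}\circ\mathcal{G}$. For $\B\in\YDH{H}$, the identification $(\B\biprod H)^{\co(H)}\cong\B$ furnishes an isomorphism $\eta_\B\colon\B\overset{\cong}\longrightarrow\mathcal{F}(\mathcal{G}(\B))$ in $\YDH{H}$, given by the coinvariant inclusion $b\mapsto b\biprod1_H$. Naturality amounts to checking, for a morphism $f\colon\B\to\B'$ in $\YDH{H}$, that $\mathcal{F}(\mathcal{G}(f))\circ\eta_\B=\eta_{\B'}\circ f$; since $\mathcal{G}(f)=f\otimes\id_H$ and $\mathcal{F}$ acts by restriction to coinvariants, both sides send $b$ to $f(b)\biprod1_H$, so the square commutes on the nose.

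Next I would treat $\mathcal{G}\circ\mathcal{F}\Rightarrow\id_{\T_H}$. For $(A;\iota,\pi)\in\T_H$ the Hopf algebra isomorphism \eqref{eq:four}, namely $\theta_A\colon A^{\co(\iota,\pi)}\biprod H\overset{\cong}\longrightarrow A,\; b\otimes h\mapsto b\iota(h)$, must be shown to be a morphism in $\T_H$. Compatibility with the inclusions is immediate from $1_\B\biprod h\mapsto\iota(h)$, and compatibility with the projections follows from $\pi(b\iota(h))=\pi(b)\,h=\varepsilon_A(b)\,h=\pi_{\B\biprod H}(b\otimes h)$, using that $\pi(b)=\varepsilon_A(b)1_H$ for every $b$ in the coinvariant subalgebra. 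Naturality in $(A;\iota,\pi)$ again reduces to the explicit descriptions of $\mathcal{F}$ and $\mathcal{G}$ on morphisms, both of which act by restriction, so the relevant square commutes by inspection.

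Having produced natural isomorphisms in both directions, the assertion that $\mathcal{G}$ is an equivalence with quasi-inverse $\mathcal{F}$ is then immediate from the definition of an equivalence of categories. I expect the main obstacle to be bookkeeping rather than anything conceptual: one must verify that \eqref{eq:one}, \eqref{eq:two} and \eqref{eq:three} are applied consistently under the identification $\B=\overline{A}^H$, so that the separate isomorphisms $\B\ratimes H\cong A$ of right $H$-comodule algebras and $A\cong\overline{A}^H\rctimes H$ of right $H$-module coalgebras genuinely glue into the single Hopf algebra map \eqref{eq:four}, and that this same identification is what makes $\eta_\B$ and $\theta_{\mathcal{G}(\B)}$ mutually inverse on objects in the image of $\mathcal{G}$.
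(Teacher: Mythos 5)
Your proposal is correct and takes essentially the same route as the paper: the unit of the adjunction comes from the identification $(\B\biprod H)^{\co(H)}\cong\B$, and the counit is exactly the Hopf algebra isomorphism \eqref{eq:four} obtained by gluing the smash-product isomorphism \eqref{eq:one} and the smash-coproduct isomorphism \eqref{eq:two} through the identification \eqref{eq:three}. The only difference is that you make explicit the naturality squares and the verification that \eqref{eq:four} is a morphism in $\T_H$ (using $\pi(b)=\varepsilon_A(b)1_H$ for coinvariant $b$), routine checks the paper leaves implicit, and your verifications are accurate.
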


For $(A;\iota,\pi),(A;\iota',\pi')\in \T_H$,
we denote by $(\iota,\pi)\approx(\iota',\pi')$ if there exists a Hopf algebra automorphism $\varphi:A\to A$
such that $\iota'=\varphi\circ\iota$ and $\pi=\pi'\circ\varphi$.
Then $\approx$ is an equivalence relation and we immediately get the following result.
\begin{proposition} \label{prp:YD-one-to-one}
Let $A$ be a fixed Hopf algebra.
There exists a one-to-one correspondence between the following two sets.
\begin{itemize}
\item The isomorphism classes of Hopf algebras $\HH$ in $\YD{H}$
such that $\B\biprod H$ is isomorphic to $A$ as a Hopf algebra.
\item The equivalence classes of pairs $(\iota,\pi)$ such that $(A;\iota,\pi) \in \T_H$.
\end{itemize}
\end{proposition}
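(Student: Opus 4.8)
The plan is to deduce the stated bijection as a formal consequence of the equivalence $\mathcal{G}\colon\YDH{H}\to\T_H$ (with quasi-inverse $\mathcal{F}$) proved just above. Any equivalence of categories induces a bijection between the sets of isomorphism classes of objects on each side, so the first step is simply to record that $\mathcal{G}$ induces a bijection between the isomorphism classes of objects $\B\in\YDH{H}$ and the isomorphism classes of objects $(B;\iota,\pi)\in\T_H$, sending the class of $\B$ to the class of $(\B\biprod H;\iota_{\B\biprod H},\pi_{\B\biprod H})$.

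Next I would cut this bijection down to the objects relevant to the statement. On the $\YDH{H}$-side we restrict to those $\B$ with $\B\biprod H\cong A$. The underlying Hopf algebra of $\mathcal{G}(\B)$ is $\B\biprod H$; conversely, for $(B;\iota,\pi)\in\T_H$ the object $\mathcal{F}(B;\iota,\pi)=B^{\co(\iota,\pi)}$ has bosonization isomorphic to $B$ by \eqref{eq:four}, since $\mathcal{G}\circ\mathcal{F}\simeq\id$. Hence the condition ``$\B\biprod H\cong A$'' corresponds exactly to the condition ``the underlying Hopf algebra $B$ is isomorphic to $A$'' on the $\T_H$-side, and $\mathcal{G}$ restricts to a bijection between the isomorphism classes of $\B$ with $\B\biprod H\cong A$ and the isomorphism classes of objects of $\T_H$ whose underlying Hopf algebra is isomorphic to $A$.

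The remaining step, and the only one requiring any care, is to identify this latter set with the set of $\approx$-classes of pairs $(\iota,\pi)$ such that $(A;\iota,\pi)\in\T_H$. First, every such isomorphism class has a representative whose underlying Hopf algebra is literally $A$: given $(B;\iota,\pi)$ and a Hopf algebra isomorphism $f\colon B\to A$, the triple $(A;\,f\circ\iota,\,\pi\circ f^{-1})$ lies in $\T_H$, and $f$ is an isomorphism $(B;\iota,\pi)\to(A;\,f\circ\iota,\,\pi\circ f^{-1})$ in $\T_H$. Second, for two triples $(A;\iota,\pi)$ and $(A;\iota',\pi')$ with the same underlying $A$, I claim they are isomorphic in $\T_H$ precisely when $(\iota,\pi)\approx(\iota',\pi')$. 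Indeed, a morphism $\varphi\colon(A;\iota,\pi)\to(A;\iota',\pi')$ that is a Hopf algebra isomorphism is automatically an isomorphism in $\T_H$: the defining relations $\varphi\circ\iota=\iota'$ and $\pi'\circ\varphi=\pi$ give $\varphi^{-1}\circ\iota'=\iota$ and $\pi\circ\varphi^{-1}=\pi'$, so $\varphi^{-1}$ is again a $\T_H$-morphism; and conversely any $\T_H$-isomorphism is in particular a Hopf algebra automorphism of $A$ satisfying these relations. This is exactly the definition of $\approx$.

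Combining the three steps yields the asserted one-to-one correspondence: the isomorphism class of $\B$ (with $\B\biprod H\cong A$) is sent to the $\approx$-class of the pair obtained by transporting $(\iota_{\B\biprod H},\pi_{\B\biprod H})$ along any chosen Hopf algebra isomorphism $\B\biprod H\cong A$. I expect the only (mild) obstacle to be the well-definedness bookkeeping, namely checking that this $\approx$-class is independent of the chosen isomorphism and of the representative $\B$. This is guaranteed by the identification in the third step: transporting along two different isomorphisms $\B\biprod H\cong A$ produces triples that differ by a Hopf algebra automorphism of $A$ respecting $(\iota,\pi)$, which is precisely a witness for the relation $\approx$.
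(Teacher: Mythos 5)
Your proof is correct and follows exactly the route the paper intends: the paper states this proposition as an immediate consequence of the equivalence $\mathcal{G}\colon\YDH{H}\to\T_H$ without spelling out the details, and your argument is precisely the standard unpacking of that equivalence (bijection on isomorphism classes, restriction to objects with bosonization isomorphic to $A$, and identification of isomorphism classes of triples over $A$ with $\approx$-classes via transport of structure). The well-definedness check in your last paragraph is also right, so nothing is missing.
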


\subsection{Duals of coinvariant subalgebras} \label{subsec:dual-bos}
In this section, we suppose that $H$ is finite-dimensional.
Let $\T_H^\fd$ (resp.~$\YDH{H}^\fd$) denote the full subcategory of $\T_H$ (resp.~$\YDH{H}$) consisting of all finite-dimensional objects.
The assignment $(A;\iota,\pi) \mapsto (A^*;\pi^*,\iota^*)$ gives a category anti-equivalence between $\T_H^\fd$ and $\T_{H^*}^\fd$.

In the following, we fix $\B\in \YDH{H}^\fd$ and put $(A;\iota,\pi):=\mathcal{G}(\B)\;\in \T_H^\fd$.
Note that $\B=A^{\co(H)}=\mathcal{F}(A;\iota,\pi)$.
Since $A^*$ is naturally a right $H^*$-module (resp.~right $H^*$-comodule) by
$\Delta_A^*\circ(\id\otimes \pi^*)$ (resp.~$(\id\otimes \iota^*)\circ m_A^*$),
we may consider $A^{*\co(H^*)}$ and $\overline{A^*}^{H^*}$, see Section~\ref{subsec:coinv} for the notations.
One easily sees that $(\overline{A}^H)^* \cong A^{*\co(H^*)}$ and $\B^*\cong \overline{A^*}^{H^*}$.

Recall that the left $H$-module coalgebra structure on $\overline{A}^H$ and the left $H$-comodule algebra structure on $\B$
are given as
\[
H\otimes\overline{A}^H\longrightarrow \overline{A}^H;\;\; h\otimes \overline{a}\longmapsto \overline{\iota(h)a}
\quad\text{and}\quad
\B\longrightarrow H\otimes \B;\;\; b\longmapsto \pi(b_{(1)})\otimes b_{(2)},
\]
respectively.
By taking the linear dual, we see that $\overline{A^*}^{H^*}$ and $(A^*)^{\co(H^*)}$ become a left $H^*$-comodule algebra and a left $H^*$-module coalgebra,
respectively.

By the above structures and the identification \eqref{eq:three},
we see that
$A^{*\co(H^*)}\;(=\overline{A^*}^{H^*})$ coincides with $\mathcal{F}(A^*;\pi^*,\iota^*)=A^{*\co(\pi^*,\iota^*)}$ in $\YDH{H^*}$.
Therefore,
under the identification \eqref{eq:three},
the duals of the left $H$-Yetter-Drinfeld structure maps $H\otimes \B\to \B$ and $\B\to H\otimes \B$ of $\B$
make $\B^* \cong (\overline{A}^{H})^* \cong A^{*\co(H^*)}$ into a Hopf algebra in $\YD{H^*}$,
and it is naturally isomorphic to $A^{*\co(\pi^*,\iota^*)}$.

\begin{center}
\begin{tikzcd}
\T_{H}^\fd \ar[r, "\approx"] \arrow[d, "\approx"'] & \T_{H^*}^\fd \ar[d, "\approx"] \\
\YDH{H}^\fd \ar[r, "\approx"] & \YDH{H^*}^\fd,
\end{tikzcd}
\hspace{0.7cm}
\begin{tikzcd}
(A;\iota,\pi) \ar[r, mapsto] \arrow[d, mapsto] & (A^*;\pi^*,\iota^*) \ar[d, mapsto] \\
\B=A^{\co(\iota,\pi)} \ar[r, mapsto] & \B^* \cong A^{*\co(\pi^*,\iota^*)}.
\end{tikzcd}
\end{center}

For $\B\in\YDH{H}^\fd$ with $(A;\iota,\pi):=\mathcal{G}(\B)$,
we get the following commutative diagram.
\begin{center}
\begin{tikzcd}
(\B\biprod H) \times (\B^*\biprod H^*) \ar[dr] \arrow[d, "\cong"'] \\
A\times A^* \ar[r, "\langle\text{ , }\rangle"'] & \kk,
\end{tikzcd}
\hspace{0.7cm}
\begin{tikzcd}
(b\biprod h,\; f \biprod g) \ar[dr, mapsto] \arrow[d, mapsto] \\
(b \iota(h),\; f *\pi^*(g)) \ar[r, mapsto] & f(b)g(h).
\end{tikzcd}
\end{center}
Here, $\langle\;,\;\rangle$ is the natural evaluation (the canonical pairing) of $A$
and $f*\pi^*(g)$ is the convolution product of $f$ and $\pi^*(g)\;(=g\circ\pi)$.

\section{Hopf superalgebras} \label{sec:Hopf-super}
In this section, we suppose that the characteristic of $\kk$ is not $2$.
Let $\Z_2=\Z/2\Z=\{\bar0,\bar1\}$ denote the additive group of order two.
We sometimes identify $\Z_2$ with the multiplicative group $\{\eb,\sigmab\}$ of order two,
where $\eb$ is the identity element and $\sigmab^2=\eb$.

\subsection{Superspaces} \label{subsec:super-symmet}
Let $\kk\Z_2$ be the group algebra of $\Z_2$ over $\kk$.
An object of the category $\lcomod{\kk\Z_2}$ of left $\kk\Z_2$-comodules is just a $\Z_2$-graded vector space $V=V_{\bar0}\oplus V_{\bar1}$.
The $\bar0$-component $V_{\bar0}$ (resp.~$\bar1$-component $V_{\bar1}$) of $V$ is called an {\it even part} (resp.~{\it odd part}) of $V$.
For a homogeneous element $0\neq v\in V_{\bar0}\cup V_{\bar1}$, we denote its degree by $|v|\;(\in\{0,1\})$.
For simplicity, when we write $|v|$, $v$ is always supposed to be homogeneous.

Since $\kk\Z_2$ is a finite-dimensional Hopf algebra, $\lcomod{\kk\Z_2}$ naturally becomes a tensor category.
Note that our base field $\kk$ is supposed to be a $\Z_2$-graded vector space with $\kk_{\bar0}=\kk$ and $\kk_{\bar1}=0$.
Moreover, the following {\it supersymmetry} makes $\lcomod{\kk\Z_2}$ into a symmetric tensor category.
\begin{equation} \label{eq:super-symmet}
\stw_{V,W} : V\otimes W \longrightarrow W\otimes V;
\quad
v\otimes w \longmapsto (-1)^{|v||w|} w\otimes v
\quad (V,W\in \lcomod{\kk\Z_2}).
\end{equation}
We denote this non-trivial symmetric tensor category $\lcomod{\kk\Z_2}$ by $\sVect$.
An object of $\sVect$ is called a {\it superspace}.
A superspace $V=V_{\bar0}\oplus V_{\bar1}$ is said to be {\it purely even} if the odd part $V_{\bar1}$ of $V$ is zero.
Obviously, $\kk^{1|0}:=\kk\oplus0$ and $\kk^{0|1}:=0\oplus\kk$ are simple objects in $\sVect$.
Conversely, one sees that all simple objects are exhausted by $\kk^{1|0}$ and $\kk^{0|1}$.

In the following, we let $\eb^*,\sigmab^*\in(\kk\Z_2)^*$ denote the dual bases of $\eb,\sigmab\in\kk\Z_2$, respectively.
The following is obvious.
\begin{lemma} \label{prp:kZ_2}
The linear map $\kk\Z_2\to (\kk\Z_2)^*; \sigmab^i\mapsto \eb^*+(-1)^i\sigmab^*$ is a Hopf algebra isomorphism.
In particular, $\kk\Z_2$ is self-dual.
\end{lemma}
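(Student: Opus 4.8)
The plan is to verify directly that the stated linear map, which I shall call $\phi$, respects all the Hopf structure, after first making the Hopf algebra structure of $(\kk\Z_2)^*$ explicit in the dual basis $\{\eb^*,\sigmab^*\}$. Since each $\sigmab^i$ is group-like in $\kk\Z_2$, the product on $(\kk\Z_2)^*$ is determined by $(f\cdot g)(\sigmab^i)=f(\sigmab^i)g(\sigmab^i)$, so $\eb^*,\sigmab^*$ are orthogonal idempotents and the unit is $\varepsilon=\eb^*+\sigmab^*$. Dualizing the multiplication $\sigmab^i\sigmab^j=\sigmab^{i+j}$ gives $\Delta(\eb^*)=\eb^*\otimes\eb^*+\sigmab^*\otimes\sigmab^*$ and $\Delta(\sigmab^*)=\eb^*\otimes\sigmab^*+\sigmab^*\otimes\eb^*$, while the counit is evaluation at $\eb$ and the antipode is the identity on the dual basis (because $S_{\kk\Z_2}=\id$).

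First I would note that $\phi$ is invertible: in the bases $\{\eb,\sigmab\}$ and $\{\eb^*,\sigmab^*\}$ its matrix is $\left(\begin{smallmatrix}1&1\\1&-1\end{smallmatrix}\right)$, whose determinant $-2$ is nonzero since $\mathrm{char}\,\kk\neq2$. Next I would check the algebra axioms: $\phi(\eb)=\eb^*+\sigmab^*$ is exactly the unit, and $\phi(\sigmab)^2=(\eb^*-\sigmab^*)^2=\eb^*+\sigmab^*=\phi(\sigmab^2)$ using the idempotent and orthogonality relations, and all remaining products reduce to these since $\eb$ is the unit. Then I would check the coalgebra axioms: $\varepsilon(\phi(\sigmab^i))=1=\varepsilon(\sigmab^i)$, and a short computation shows $\Delta(\eb^*\pm\sigmab^*)=(\eb^*\pm\sigmab^*)\otimes(\eb^*\pm\sigmab^*)$, i.e.\ the elements $\eb^*\pm\sigmab^*$ are group-like, which is precisely $\Delta\circ\phi=(\phi\otimes\phi)\circ\Delta$. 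Having established that $\phi$ is a bialgebra isomorphism, the antipode requires no separate check, since a bialgebra map automatically commutes with the (convolution-inverse) antipode.

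A more conceptual route, which I would at least mention, identifies $\G((\kk\Z_2)^*)$ with the character group $\widehat{\Z_2}$: the two algebra homomorphisms $\kk\Z_2\to\kk$ are the trivial and the sign characters, which in the dual basis are exactly $\eb^*+\sigmab^*$ and $\eb^*-\sigmab^*$. These are group-like, they span $(\kk\Z_2)^*$, and $\phi$ is then the linearization of the group isomorphism $\sigmab^i\mapsto(\text{$i$-th character})$; self-duality of $\kk\Z_2$ follows from self-duality of $\Z_2$.

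There is no genuine obstacle here, as the statement is flagged as obvious, so the only point requiring care is the hypothesis $\mathrm{char}\,\kk\neq2$, which is used both to invert $\phi$ (determinant $-2$) and to guarantee that the two characters $\eb^*\pm\sigmab^*$ are distinct; in characteristic $2$ the map degenerates and the claim fails.
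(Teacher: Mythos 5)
Your proof is correct: the paper offers no argument at all for this lemma, simply declaring it obvious, and your direct verification (the explicit dual Hopf structure, invertibility of $\phi$ via the determinant $-2$, group-likeness of $\eb^*\pm\sigmab^*$, and the standard fact that a bialgebra map between Hopf algebras automatically preserves the antipode) is exactly the routine computation the authors left to the reader. Your closing remark correctly isolates the role of $\mathrm{char}\,\kk\neq 2$, where the statement indeed fails, and your character-group interpretation of $\eb^*\pm\sigmab^*$ as the trivial and sign characters of $\Z_2$ matches how the lemma is used later in the paper (e.g.\ in the description of the split surjection $\pi$ in~\eqref{eq:pi}).
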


Let $V=V_{\bar0}\oplus V_{\bar1}$ be an object in $\sVect$.
By Lemma~\ref{prp:kZ_2}, $V$ naturally becomes a left $\kk\Z_2$-module with structure
\[
\kk\Z_2\otimes V \longrightarrow V; \quad
\sigmab^i \otimes v \longmapsto \sigmab^i.v := v_{\bar0} + (-1)^{i} v_{\bar1},
\]
where $v=v_{\bar0}+v_{\bar1}$ with $v_{\bar0}\in V_{\bar0}$ and $v_{\bar1}\in V_{\bar1}$.
If $v\in V$ is homogeneous, then we can simply write as $\sigmab^i.v=(-1)^{i|v|}v$ for $i\in\{0,1\}$.

The following result tells us that a Hopf algebraic approach
can be employed in the study of the theory of super-mathematics (see Masuoka~\cite[Section~2]{Mas05} for example).
\begin{lemma} \label{prp:braided-mono-subcat}
The category $\sVect$ is a monoidal full subcategory of $\YD{\kk\Z_2}$, and the braiding of $\YD{\kk\Z_2}$ restricts to the supersymmetry.
\end{lemma}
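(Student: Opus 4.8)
The plan is to make the two structure maps explicit and then reduce every assertion to a one-line computation exploiting that $\kk\Z_2$ is commutative and cocommutative with antipode equal to the identity on group-likes.

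First I would record the structures that turn a superspace $V=V_{\bar0}\oplus V_{\bar1}$ into an object of $\YD{\kk\Z_2}$. Viewed in $\lcomod{\kk\Z_2}$, the $\Z_2$-grading is nothing but the left $\kk\Z_2$-coaction $v\mapsto\sigmab^{|v|}\otimes v$ on homogeneous $v$; the left $\kk\Z_2$-action is the one transported through the self-duality isomorphism of Lemma~\ref{prp:kZ_2}, namely $\sigmab^i.v=(-1)^{i|v|}v$ as in the display preceding the statement. I would then verify the Yetter-Drinfeld compatibility condition for these two structures. Since $\Delta(\sigmab^i)=\sigmab^i\otimes\sigmab^i$ and $S(\sigmab^i)=\sigmab^i$, the right-hand side $h_{(1)}v_{(-1)}S(h_{(3)})\otimes h_{(2)}.v_{(0)}$ collapses, by commutativity, to $v_{(-1)}\otimes\sigmab^i.v_{(0)}$; evaluating both sides on a homogeneous $v$ of degree $j$ yields $\sigmab^j\otimes(-1)^{ij}v$ in each case. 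This establishes $V\in\YD{\kk\Z_2}$ and produces the inclusion on objects.

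Next I would check fullness. A morphism in $\sVect$ is by definition a grade-preserving linear map, i.e.\ a $\kk\Z_2$-comodule map; conversely any $\YD{\kk\Z_2}$-morphism is in particular a comodule map and hence grade-preserving. Because the action $\sigmab^i.v=(-1)^{i|v|}v$ is completely determined by the grading, a grade-preserving map automatically commutes with the action, so the two Hom-sets coincide and the inclusion is full. For monoidality I would compare the two tensor products: the unit object $\kk=\kk^{1|0}$ agrees, and on $V\otimes W$ the diagonal $\kk\Z_2$-action $\sigmab^i.(v\otimes w)=(-1)^{i|v|}(-1)^{i|w|}\,v\otimes w=(-1)^{i(|v|+|w|)}v\otimes w$ coincides with the action attached by Lemma~\ref{prp:kZ_2} to the total degree $|v|+|w|$ of $v\otimes w$; since the coactions agree by construction, the inclusion is (strictly) monoidal.

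Finally I would restrict the braiding. Evaluating the Yetter-Drinfeld braiding \eqref{eq:YD-braiding} on homogeneous elements gives $c_{V,W}(v\otimes w)=\sigmab^{|v|}.w\otimes v=(-1)^{|v||w|}\,w\otimes v$, which is exactly the supersymmetry $\stw_{V,W}$ of \eqref{eq:super-symmet}. There is no serious obstacle here, as every assertion reduces to a short computation; the only point demanding care is the bookkeeping ensuring that the action supplied through self-duality (Lemma~\ref{prp:kZ_2}) coincides with the diagonal action on tensor products, which is precisely what makes the embedding \emph{monoidal} rather than a mere inclusion of braided categories.
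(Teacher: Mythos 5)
Your proof is correct and follows essentially the same route as the paper: both verify by direct computation on homogeneous elements that the grading-induced coaction and the sign action satisfy the Yetter--Drinfeld compatibility over the commutative, cocommutative $\kk\Z_2$ (where the condition collapses to $\delta(\sigmab^i.v)=v_{(-1)}\otimes\sigmab^i.v_{(0)}$), and then evaluate the braiding \eqref{eq:YD-braiding} to recover $(-1)^{|v||w|}w\otimes v$. The only difference is that you additionally spell out the fullness and monoidality checks (Hom-sets coincide because the action is determined by the grading; the diagonal action matches the total-degree action), which the paper asserts without explicit proof --- a welcome but routine addition.
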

\begin{proof}
Let $V=V_{\bar0}\oplus V_{\bar1}\in \sVect$.
First, note that the left $\kk\Z_2$-comodule structure $\delta:V\to \kk\Z_2\otimes V$ is explicitly given by
$\delta(v) = \sigmab^i \otimes v$ for $v\in V_{\bar i}$.
For $i\in\{0,1\}$ and $v=v_{\bar0}+v_{\bar1}$ with $v_{\bar0}\in V_{\bar0}$ and $v_{\bar1}\in V_{\bar1}$, we have
\[
\delta(\sigmab^i. v) = \delta(v_{\bar0}) + (-1)^i \delta(v_{\bar1}) = \eb\otimes v_{\bar0} + (-1)^i\sigmab\otimes v_{\bar1}.
\]
This shows that $V$ is an object of $\YD{\kk\Z_2}$.
Take $V,W\in\sVect$ and $v\in V,w\in W$ to be homogeneous.
Since $\delta(v)=\sigmab^{|v|}\otimes v$ and $\sigmab^{|v|}.w = (-1)^{|v||w|}w$,
the braiding given in \eqref{eq:YD-braiding} is calculated as
\[
c_{V,W}(v\otimes w) = \sigmab^{|v|}.w\otimes v = (-1)^{|v||w|} w \otimes v = \stw_{V,W}(v\otimes w).
\]
The proof is done.
\end{proof}

A {\it superalgebra} (resp.~{\it supercoalgebra}) is an algebra (resp.~coalgebra) in $\sVect$ ($\subset \YD{\kk\Z_2}$).
Let $\A=(\A;m_\A,u_\A)$ be a superalgebra.
We say that $\A$ is {\it super-commutative} if it satisfies $ab=(-1)^{|a||b|}ba$ for all $a,b\in\A$.
A superspace $V\in\sVect$ is called a {\it left $\A$-supermodule}
if it is equipped with a morphism $\rho:\A\otimes V\to V$ in $\sVect$ such that $\rho\circ(m_A\otimes \id_V) = \rho\circ(\id_A\otimes\rho)$ and $\rho\circ(u_A\otimes\id_V)=\id_V$.
The category of all left $\A$-supermodules is denoted by $\lsmod{\A}$.
We say that a finite-dimensional superalgebra $\A$ is {\it semisimple} if the category $\lsmod{\A}$ is semisimple, that is, every object of $\lsmod{\A}$ is direct sum of simple objects.

Let $\C=(\C;\Delta_\C,\varepsilon_\C)$ be a supercoalgebra.
We say that $\C$ is {\it super-cocommutative} if it satisfies $\Delta_\C(c) = (-1)^{|c_{(1)}||c_{(2)}|}c_{(2)}\otimes c_{(1)}$ for all $c\in \C$.
The notion of right $\C$-supercomodule is defined in a similar manner.
The category of all right $\C$-supercomodules is denoted by $\rscomod{\C}$.
We say that $\C$ is {\it pointed} if any simple right $\C$-supercomodule is one-dimensional.

\subsection{Hopf superalgebras} \label{subsec:Hopf-super}
A {\it Hopf superalgebra} is a Hopf algebra in $\sVect$ ($\subset \YD{\kk\Z_2}$).
One sees that the antipode $S_\HH$ of a Hopf superalgebra $\HH$ is a {\it super-anti-algebra map}, that is, it satisfies
\begin{equation}\label{eq:antipode}
S_\HH(hh') = (-1)^{|h||h'|} S_\HH(h') S_\HH(h)
\end{equation}
for $h,h'\in\HH$.
Also, one can show that $S_\HH$ is a {\it super-anti-coalgebra map} (we omit the definition here since we do not use it in this paper).

Let $\HH=(\HH;m_{\HH},u_{\HH},\Delta_{\HH},\varepsilon_{\HH},S_{\HH})$ be a Hopf superalgebra.
Note that the comultiplication $\Delta_{\HH}$ of $\HH$ satisfies
$\Delta_\HH(\HH_{\bar\epsilon})\subset \sum_{\bar\eta+\bar\eta'=\bar\epsilon}\HH_{\bar\eta}\otimes \HH_{\bar\eta'}$ for $\epsilon\in\{0,1\}$.
As in the non-super setting, the set
\[
\G(\HH) := \{g\in \HH_{\bar0} \mid \varepsilon_\HH(g)=1,\,\,\Delta_{\HH}(g)=g\otimes g\}
\]
naturally becomes a group with $g^{-1}=S_{\HH}(g)$ for $g\in \G(\HH)$.
An element of $\G(\HH)$ is called a {\it group-like} element of $\HH$ (cf.~Remark~\ref{rem:A_4}).

For a fixed $g\in \G(\HH)$,
an element $z\in \HH$ is said to be {\it $g$-skew primitive} if it satisfies
\[
\Delta_\HH(z) = g\otimes z + z \otimes 1_\HH.
\]
In this case, we see that $\varepsilon_{\HH}(z)=0$ and $S_{\HH}(z)=-g^{-1}z$.
If a $g$-skew primitive element $z$ belongs to $\HH_{\bar1}$, then we shall call $z$ an {\it odd $g$-skew primitive}.
An (resp.~odd) $1_\HH$-skew primitive element is simply called an (resp.~{\it odd}) {\it primitive} element.
The set of all primitive elements in $\HH$ is denoted by $\PP(\HH)$.
One easily sees that $\PP(\HH)$ becomes a {\it Lie superalgebra} (i.e., a Lie algebra in $\sVect$) with {\it superbracket} $[x,y]:=xy-(-1)^{|x||y|}yx$ ($x,y\in\PP(\HH)$).

\begin{example}
Let $V$ be a finite-dimensional vector space with basis $\{z_1,\dots,z_\theta\}$.
The exterior algebra
\[
\HH:= \bigwedge(V)=\bigwedge(z_1,\dots,z_\theta)
\]
over $V$ naturally becomes a super-commutative superalgebra, called the {\it exterior superalgebra}.
Moreover, $\HH$ becomes a super-commutative and super-cocommutative Hopf superalgebra by letting each $z_1,\dots,z_\theta$ be odd primitive.
One sees that $\G(\HH)=\{1\}$, $\PP(\HH)=V$ and $\HH$ is pointed.
\qedheree
\end{example}

We assume our base field $\kk$ is an algebraically closed field of characteristic zero throughout this Section~\ref{subsec:Hopf-super}.

Let $\HH$ be a super-cocommutative Hopf superalgebra.
Then the group $\Gamma:=\G(\HH)$ acts on $\mathfrak{g}:=\PP(\HH)$ by the adjoint action as usual
and the universal enveloping superalgebra $\mathcal{U}(\mathfrak{g})$ of the Lie superalgebra $\mathfrak{g}$
forms a left $\kk\Gamma$-supermodule.
The following result was first shown by Kostant~\cite[Theorem~3.3]{Kos77}
(see also Andruskiewitsch, Etingof and Gelaki~\cite[Corollary~2.3.5]{AndEtiGel01} and Masuoka~\cite[Theorems~3.6 and 4.5]{Mas05}).
\begin{theorem}
$\HH$ is isomorphic to the smash product $\kk\Gamma\ltimes\mathcal{U}(\mathfrak{g})$ of $\kk\Gamma$ and $\mathcal{U}(\mathfrak{g})$.
In particular, if $\HH$ is finite-dimensional, then $\HH\cong \kk\Gamma\ltimes\bigwedge(\mathfrak{g})$.
\end{theorem}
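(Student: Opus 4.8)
The plan is to follow the classical Cartier--Kostant--Milnor--Moore strategy, transplanted into the symmetric tensor category $\sVect$, with the coradical as starting point. Since $\kk$ is algebraically closed of characteristic zero and $\HH$ is super-cocommutative, every finite-dimensional simple sub-supercoalgebra of $\HH$ dualizes to a simple super-commutative superalgebra; in characteristic $\ne 2$ any odd element $\theta$ of such an algebra satisfies $\theta^2=0$ and generates a proper ideal, so the odd part vanishes and what remains is an ordinary simple commutative algebra, which over $\kk$ is $\kk$ itself. Hence every simple sub-supercoalgebra is one-dimensional and spanned by a group-like element, so $\HH$ is pointed with coradical $\HH_0=\kk\Gamma$, where $\Gamma=\G(\HH)$. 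First I would record that $\kk\Gamma$ is a (purely even) Hopf sub-superalgebra and that the coradical filtration yields a coalgebra decomposition $\HH=\bigoplus_{g\in\Gamma}\HH^{(g)}$ into irreducible components, where $\HH^{(g)}$ is the component containing $g$.

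Next I would analyze the connected component $\HH^{(1)}$, the irreducible component of $1$, which is a connected super-cocommutative Hopf sub-superalgebra of $\HH$. The heart of the argument is a super-analogue of the Milnor--Moore theorem: in characteristic zero such a Hopf superalgebra is generated by its primitives and is isomorphic, as a Hopf superalgebra, to the universal enveloping superalgebra $\mathcal{U}(\mathfrak{g})$ of $\mathfrak{g}:=\PP(\HH)$. I would prove this along the standard route---show that the coradical filtration of $\HH^{(1)}$ coincides with the filtration by powers of the augmentation, that the associated graded object is the symmetric superalgebra on $\mathfrak{g}$ (super-PBW), and then that the canonical map $\mathcal{U}(\mathfrak{g})\to\HH^{(1)}$ is an isomorphism because it is a filtered map inducing an isomorphism on associated graded objects. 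This is the step I expect to be the main obstacle, since the super-PBW theorem and the characteristic-zero rigidity forcing primitive generation must be set up carefully in $\sVect$; alternatively, since the whole paper is built on bosonization, one may note that $\widehat{\HH}$ is cocommutative and descend the classical Cartier--Kostant result through the equivalence of Section~\ref{subsec:boson}, or simply quote the statement from \cite{Kos77, AndEtiGel01, Mas05}.

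To assemble the global structure I would observe that left translation $x\mapsto gx$ carries $\HH^{(1)}$ isomorphically onto $\HH^{(g)}$, so that $\HH=\kk\Gamma\cdot\mathcal{U}(\mathfrak{g})$ and the multiplication map $\kk\Gamma\otimes\mathcal{U}(\mathfrak{g})\to\HH$, $g\otimes x\mapsto gx$, is a linear isomorphism by the coradical decomposition. Since each group-like $g$ is even and conjugation $g\triangleright x=gxg^{-1}$ preserves both primitivity and parity, $\Gamma$ normalizes $\mathcal{U}(\mathfrak{g})$ via the adjoint action on $\mathfrak{g}$; consequently this multiplication realizes exactly the smash product structure, giving the Hopf superalgebra isomorphism $\HH\cong\kk\Gamma\ltimes\mathcal{U}(\mathfrak{g})$.

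Finally, for the finite-dimensional refinement I would argue that $\dim_\kk\HH<\infty$ forces $\dim_\kk\mathcal{U}(\mathfrak{g})<\infty$, whence $\mathfrak{g}_{\bar0}=0$: otherwise $\mathcal{U}(\mathfrak{g})$ would contain the ordinary enveloping algebra $\mathcal{U}(\mathfrak{g}_{\bar0})$ of a nonzero Lie algebra, which in characteristic zero is infinite-dimensional. Thus $\mathfrak{g}=\mathfrak{g}_{\bar1}$ is purely odd, its superbracket lands in $\mathfrak{g}_{\bar0}=0$ and so vanishes, and therefore $\mathcal{U}(\mathfrak{g})=\bigwedge(\mathfrak{g})$ is the exterior superalgebra, yielding $\HH\cong\kk\Gamma\ltimes\bigwedge(\mathfrak{g})$.
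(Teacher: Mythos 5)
The paper offers no proof of this theorem to compare against: it is quoted as known, from Kostant \cite[Theorem~3.3]{Kos77} (see also \cite[Corollary~2.3.5]{AndEtiGel01} and \cite[Theorems~3.6 and 4.5]{Mas05}), so your closing option of ``simply quote the statement'' is exactly what the authors do. Your main sketch is the standard Cartier--Kostant--Milnor--Moore argument transplanted into $\sVect$, and it is sound in outline: the dual of a simple sub-supercoalgebra of a super-cocommutative $\HH$ is a finite-dimensional graded-simple super-commutative superalgebra whose odd part dies exactly as you say (a nonzero odd $\theta$ satisfies $\theta^2=0$, and $A\theta$ is a nonzero proper homogeneous ideal), giving pointedness with coradical $\kk\Gamma$; the decomposition into irreducible components, the super Milnor--Moore/PBW step for $\HH^{(1)}$ (which you rightly identify as the real work---it is precisely the content of the cited references), the reassembly via translation by group-likes and the parity-preserving conjugation action, and the reduction $\mathfrak{g}_{\bar0}=0$ with $[\mathfrak{g}_{\bar1},\mathfrak{g}_{\bar1}]\subseteq\mathfrak{g}_{\bar0}=0$ in finite dimension are all correct.

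One of your proposed shortcuts, however, is genuinely wrong and would fail: the bosonization $\widehat{\HH}=\HH\biprod\kk\Z_2$ of a super-cocommutative Hopf superalgebra is \emph{not} cocommutative, so you cannot ``descend the classical Cartier--Kostant result through the equivalence'' as suggested. Super-cocommutativity is cocommutativity with respect to the nontrivial symmetry of $\sVect$; after bosonization that braiding is re-encoded in the $\kk\Z_2$-(co)action, and ordinary cocommutativity is destroyed. The paper itself contradicts your claim: by Lemma~\ref{prp:boson=non-triv}, $\widehat{\HH}$ is neither commutative nor cocommutative whenever $\HH_{\bar1}\neq0$, and concretely $\bigwedge(z)$ is super-cocommutative while $\widehat{\bigwedge(z)}\cong H_4$ is Sweedler's Hopf algebra \eqref{eq:Sweedler}. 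Since you present this only as an alternative to your main route (and to citing the literature), the proposal as a whole stands, but that remark must be deleted. A smaller imprecision: the coradical filtration of $\HH^{(1)}$ is increasing and does not ``coincide with the filtration by powers of the augmentation'' (augmentation-ideal powers are decreasing); what your filtered-map argument actually needs is the standard statement that the associated graded of the coradical filtration is the super-symmetric bialgebra on $\PP(\HH)$.
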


Therefore, as in the non-super situation,
in the classification theory of finite-dimensional Hopf superalgebras,
the case of super-commutative or the case of super-cocommutative can be excluded as they are trivial.

\begin{examples} \label{ex:AM}
Fix a primitive fourth root of unity $\zeta_4\in\kk$.
According to Aissaoui and Makhlouf~\cite{AisMak14}, the following is a complete list of pairwise non-isomorphic Hopf superalgebras of dimension $4$ whose odd parts are non-zero.
\begin{itemize}
\item $\HH_4^{(1)}:=\bigwedge(z_1,z_2)$.
\item $\HH_4^{(2)}:=\kk\langle g,z \mid g^2=1,z^2=0,gz=zg \rangle$,
where $g$ is group-like and $z$ is odd primitive.
Note that $\HH_4^{(2)}\cong\kk \Z_2\otimes \bigwedge(z)$.
\item $\HH_4^{(3)}:=\kk\langle g,z \mid g^2=1,z^2=0,gz=zg \rangle$,
where $g$ is group-like and $z$ is odd $g$-skew primitive.
\item $\HH_4^{(4)}:=\kk\langle g,z \mid g^2=1,z^2=0,gz=-zg \rangle$,
where $g$ is group-like and $z$ is odd primitive.
\item $\A_4(\pm\zeta_4):=\kk\langle x,z \mid x^2 + z^2=1,xz=zx=0 \rangle$,
where $x$ is even, $z$ is odd and
\[
\begin{gathered}
\Delta(x)=x\otimes x \pm \zeta_4 z\otimes z,\quad \varepsilon(x)=1,\quad S(x)=x, \\
\Delta(z)=x\otimes z + z\otimes x,\quad \varepsilon(z)=0,\quad S(z)=\mp\zeta_4 z.
\end{gathered}
\]
\end{itemize}
For the antipodes of $\A_4(\zeta_4)$ and $\A_4(-\zeta_4)$, one should note the formula \eqref{eq:antipode}.
Also, one should note $\A_4(\zeta_4)\not\cong \A_4(-\zeta_4)$ since the eigenvalues of their antipodes are different.
As we have mentioned, the exterior superalgebra $\HH_4^{(1)}=\bigwedge(z_1, z_2)$ is pointed.
We will see that $\HH_4^{(2)}$, $\HH_4^{(3)}$ and $\HH_4^{(4)}$ are pointed, and $\A_4(\zeta_4)$ and $\A_4(-\zeta_4)$ are semisimple, see Theorem~\ref{thm:ShiShiWak2} and Example~\ref{ex:A_4}, respectively.
\qedheree
\end{examples}

\subsection{A construction of Hopf superalgebras} \label{subsub:AEG}
We recall from \cite{AndEtiGel01} a construction of Hopf superalgebras.
Let $H=(H;m_H,u_H,\Delta_H,\varepsilon_H,S_H)$ be a Hopf algebra, and let $c\in\G(H)$ be a group-like element such that $c^2=1_H$.
From the pair $(H, c)$, we can construct a Hopf superalgebra $\HH$ as follows:
As an algebra, $\HH = H$.
We make it into a $\Z_2$-graded vector space by letting
\[
\HH_{\bar\epsilon}:=\{ h\in \HH \mid chc=(-1)^\epsilon h\} \qquad (\epsilon \in\{0,1\}).
\]
For each $h\in \HH$, we define $\Delta_{\HH,\bar0}(h)\in \HH\otimes \HH_{\bar0}$ and $\Delta_{\HH,\bar1}(h)\in \HH\otimes \HH_{\bar1}$
satisfying $\Delta_H(h)=\Delta_{\HH,\bar0}(h)+\Delta_{\HH,\bar1}(h)$.

\begin{theorem}[\text{\cite[Theorem~3.1.1]{AndEtiGel01}}] \label{thm:AEG}
The superalgebra $\HH$ becomes a Hopf superalgebra together with the comultiplication, the counit and the antipode given by
\begin{equation} \label{eq:AEG-Delta-S}
\Delta_{\HH}(h) := \Delta_{\HH,\bar0}(h) - (-1)^{|h|}(c\otimes1_H)\Delta_{\HH,\bar1}(h),
\end{equation}
$\varepsilon_{\HH}:=\varepsilon_{H}$ and $S_{\HH}(h) := c^{|h|} S_H(h)$ for $h\in \HH$, respectively.
This construction establishes a bijective correspondence between:
\begin{itemize}
\item Pairs $(H,c)$ consisting of a Hopf algebra $H$ and an element $c\in\G(H)$ satisfying $c^2=1_H$.
\item Pairs $(\HH,g)$ consisting of a Hopf superalgebra $\HH$ and en element $g\in\G(\HH)$
satisfying $g^2=1_{\HH}$ and $gzg=(-1)^{|z|}z$ for all $z\in\HH$.
\end{itemize}
\end{theorem}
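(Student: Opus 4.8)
The plan is to realize the whole construction as a single ``untwisting'' isomorphism and then to read off each Hopf-superalgebra axiom of $\HH$ from the corresponding axiom of $H$. Since $\mathrm{char}\,\kk\neq2$ and $c^2=1_H$, the map $h\mapsto chc$ is an involutive algebra automorphism of $H$, so $H=\HH_{\bar0}\oplus\HH_{\bar1}$ is a genuine $\Z_2$-grading of the algebra; moreover this conjugation coincides with the $\kk\Z_2$-action $\sigmab.h=(-1)^{|h|}h$ of the supersymmetry (Lemma~\ref{prp:braided-mono-subcat}), so $\HH$ is an algebra in $\sVect$. Because $c$ is group-like, $\Delta_H$ preserves the total grading (one checks $\Delta_H(chc)=(c\otimes c)\Delta_H(h)(c\otimes c)$), and $\varepsilon_H$ vanishes on $\HH_{\bar1}$ (apply $\varepsilon_H$ to $czc=-z$).

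First I would introduce the linear map
\[
\Phi:\HH\otimes\HH\longrightarrow H\otimes H,\qquad a\otimes b\longmapsto a\,c^{|b|}\otimes b,
\]
and verify that it is an algebra isomorphism from the super tensor product, whose product is $(a\otimes b)(a'\otimes b')=(-1)^{|b||a'|}aa'\otimes bb'$, onto the ordinary tensor product $H\otimes H$, with inverse given by the same formula (as $c^2=1_H$). The only input is $c^{|b|}a'=(-1)^{|b||a'|}a'c^{|b|}$, which is exactly the grading condition. A short computation, using that $\Delta_H$ is graded, then gives
\[
\Phi\circ\Delta_\HH=\Delta_H,
\]
the sign $-(-1)^{|h|}$ in \eqref{eq:AEG-Delta-S} being precisely what is needed for $\Phi$ to absorb the conjugation by $c$ on the odd part. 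Consequently $\Delta_\HH=\Phi^{-1}\circ\Delta_H$ is a composite of algebra maps, hence an algebra map $\HH\to\HH\otimes\HH$ in $\sVect$; this is the bialgebra compatibility, the delicate point where the Koszul signs and the $c$-conjugation must balance, and it is the step I expect to be the crux. The counit axioms transport immediately, since $\varepsilon_H(c)=1$ and $\varepsilon_H$ kills $\HH_{\bar1}$.

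For coassociativity I would introduce the iterated untwist $\HH\otimes\HH\otimes\HH\to H\otimes H\otimes H$, $a\otimes b\otimes d\mapsto a\,c^{|b|+|d|}\otimes b\,c^{|d|}\otimes d$, check that it is again an algebra isomorphism, and prove the coherence identities expressing both $(\Delta_\HH\otimes\id)\Delta_\HH$ and $(\id\otimes\Delta_\HH)\Delta_\HH$ as its inverse applied to $(\Delta_H\otimes\id)\Delta_H$ and $(\id\otimes\Delta_H)\Delta_H$; coassociativity of $\Delta_\HH$ then follows from that of $\Delta_H$. The antipode is handled by the same device: writing $\Delta_\HH(h)=\sum h_{(1)}c^{|h_{(2)}|}\otimes h_{(2)}$ and using $S_H(c)=c$ together with $|h_{(1)}|+|h_{(2)}|=|h|$, one finds $m_\HH(S_\HH\otimes\id)\Delta_\HH(h)=c^{|h|}\sum S_H(h_{(1)})h_{(2)}=\varepsilon_H(h)\,c^{|h|}=\varepsilon_\HH(h)1_\HH$ (the last equality because $\varepsilon_H$ vanishes on $\HH_{\bar1}$), and symmetrically on the other side; the same formula shows $S_\HH$ is grade-preserving.

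Finally, for the bijective correspondence I would check that the forward assignment yields $g:=c$, which is group-like in $\HH$ (as $\Delta_\HH(c)=c\otimes c$) and satisfies $g^2=1_\HH$ and $gzg=(-1)^{|z|}z$ by the very definition of the grading. For the inverse, given $(\HH,g)$ I would set $H:=\HH$ as an algebra and $\Delta_H:=\Phi_g\circ\Delta_\HH$ with $\Phi_g(a\otimes b)=a\,g^{|b|}\otimes b$ and $S_H(h):=g^{|h|}S_\HH(h)$; here the hypothesis $gzg=(-1)^{|z|}z$ is exactly what makes $\Phi_g$ an algebra isomorphism, whence $\Delta_H$ is an ordinary algebra map and $H$ an ordinary Hopf algebra with $g$ a group-like involution. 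That the two constructions are mutually inverse is then immediate, because $\Phi_c$ is an involution of the underlying space (as $c^2=1$), so untwisting undoes twisting and the two gradings coincide. Beyond the compatibility crux, I expect the only real labour to be the coassociativity coherence, which is routine but notationally heavy.
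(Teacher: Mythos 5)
Your argument is correct, but there is nothing in the paper to measure it against: the paper imports Theorem~\ref{thm:AEG} from \cite[Theorem~3.1.1]{AndEtiGel01} as a black box and gives no proof. Judged on its own, your untwisting proof is sound, and the points you flag as delicate do check out. The map $\Phi(a\otimes b)=a\,c^{|b|}\otimes b$ is an involutive algebra isomorphism from the super tensor product onto the ordinary one, since $c^{|b|}a'=(-1)^{|b||a'|}a'c^{|b|}$ and $c^2=1_H$; and $\Phi\circ\Delta_\HH=\Delta_H$ holds because on the summand of $\Delta_H(h)$ with odd second leg the first leg has parity $|h|+1$, so that $-(-1)^{|h|}\,c\,h_{(1)}=h_{(1)}c$ — exactly how the sign in \eqref{eq:AEG-Delta-S} is absorbed. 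Your three-fold untwist $a\otimes b\otimes d\mapsto a\,c^{|b|+|d|}\otimes b\,c^{|d|}\otimes d$ does satisfy the two coherence identities (both iterated coproducts of $\HH$ equal it applied to the common iterated coproduct of $H$, using $\Delta_\HH(c)=c\otimes c$ and that $c$ is even), so coassociativity transports; the antipode computation $S_\HH(h_{(1)}c^{|h_{(2)}|})\,h_{(2)}=c^{|h|}S_H(h_{(1)})h_{(2)}=\varepsilon_\HH(h)1_\HH$ is right, with $\varepsilon_H(\HH_{\bar1})=0$ closing the last step; and in the inverse direction the hypothesis $gzg=(-1)^{|z|}z$ enters exactly where you say, involutivity of the untwist giving mutual inverseness. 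One remark connecting your device to this paper's own machinery in Section~\ref{sec:bos}: the hypothesis on $g$ says precisely that $g\biprod\sigmab$ is a \emph{central} group-like of order two in the bosonization $\widehat{\HH}$ (indeed $(g\biprod\sigmab)(h\biprod\sigmab^j)=(-1)^{|h|}gh\biprod\sigmab^{1+j}$ equals $(h\biprod\sigmab^j)(g\biprod\sigmab)=hg\biprod\sigmab^{j+1}$ if and only if $ghg=(-1)^{|h|}h$), and the quotient Hopf algebra $\widehat{\HH}/(g\biprod\sigmab-1)$, identified with $\HH$ via $h\mapsto \overline{h\biprod\eb}$, carries exactly the coproduct $h\mapsto h_{(1)}g^{|h_{(2)}|}\otimes h_{(2)}$ and antipode $g^{|h|}S_\HH(h)$ of the theorem; your $\Phi_g$ is this identification made explicit. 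That route would derive the theorem conceptually from the biproduct formalism at the cost of doubling the dimension and then quotienting, whereas your direct argument is self-contained and works on the nose; both are legitimate, and yours is arguably closer in spirit to the original verification in \cite{AndEtiGel01}.
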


\begin{remark} \label{rem:A_4}
For this theorem to hold, a group-like element of $\HH$ needs to be homogeneous and this is one of reasons why we adopt our definition of $\G(\HH)$.
We also note that there happen to exist a non-homogeneous element $g\in \HH$ such that $\varepsilon(g)=1$ and $\Delta(g)=g\otimes g$.
For example, we shall consider the Hopf superalgebra $\HH=\A_4(-\zeta_4)$ given in Example~\ref{ex:AM}.
By definition, we have $\G(\HH)=\{1_{\HH},x^2-z^2\}$.
Let $\zeta_8$ be an element in $\kk$ satisfying $\zeta_8^2=\zeta_4$.
Then $g := x + \zeta_8^3 z$ satisfies $\varepsilon(g)=1$ and $\Delta(g)=g\otimes g$.
However, $g$ is non-homogeneous, and hence $g\notin\HH_{\bar0}$.
\qedheree
\end{remark}

We will see that a Hopf superalgebra obtained from the group algebra $\kk \sym$ by Theorem \ref{thm:AEG} is, up to isomorphisms, a unique semisimple Hopf superalgebra of dimension $6$ with non-zero odd part (see Section~\ref{subsec:Fuk}).
One of four-dimensional Hopf superalgebra mentioned in Example~\ref{ex:AM} is also obtained by the construction of Theorem~\ref{thm:AEG}.

\begin{example}
Let $H_4$ denote {\it Sweedler's four-dimensional Hopf algebra}
\begin{equation} \label{eq:Sweedler}
H_4 = \kk\langle c,x \mid c^2=1,x^2=0,cx=-xc \rangle,
\end{equation}
where $c$ is group-like and $x$ is $c$-skew primitive.
One sees that the corresponding Hopf superalgebra of $(H_4,c)$ is $(\HH_4^{(4)},g)$,
where $\HH_4^{(4)}$ is given in Example~\ref{ex:AM}.
\qedheree
\end{example}

\subsection{Duals of Hopf superalgebras} \label{subsec:duals}
For superspaces $V,V'\;(\in\sVect)$, a bilinear map $\langle\;,\;\rangle:V'\times V\to \kk$ is said to be a {\it pairing on $V'$ and $V$} if
it satisfies $\langle V'_{\bar\epsilon}, V_{\bar\eta}\rangle=0$ if $\epsilon\neq \eta$ ($\epsilon,\eta\in\{0,1\}$).
In other words, the induced map $\langle\;,\;\rangle:V'\otimes V\to \kk^{1|0}$ is a morphism in $\sVect$.
Let $\langle\;,\;\rangle':W'\times W\to \kk$ be a paring on $W'$ and $W$.
Then we define a pairing
\[
(V'\otimes W')\times (V\otimes W) \longrightarrow \kk;
\quad
(f\otimes g, v\otimes w) \longmapsto \langle f,v \rangle \langle g,w\rangle'.
\]
on $V'\otimes W'$ and $V\otimes W$.
If $V'=W', V=W$ and $\langle\;,\;\rangle'=\langle\;,\;\rangle$,
then we simply denote the pairing on $V'\otimes V'$ and $V\otimes V$ by the same symbol
\[
\langle f\otimes g, v\otimes w\rangle = \langle f,v \rangle \langle g,w\rangle
\quad(f,g\in V', v,w\in V)
\]
as usual.

\begin{definition}
For Hopf superalgebras $\KK=(\KK;m_\KK,u_\KK,\Delta_\KK,\varepsilon_\KK,S_\KK)$ and $\HH=(\HH;m_\HH,u_\HH,\Delta_\HH,\varepsilon_\HH,S_\HH)$,
a pairing $\langle\;,\;\rangle:\KK\times\HH\to\kk$ on $\KK$ and $\HH$ is called a {\it Hopf pairing} if it satisfies the following conditions.
\[
\begin{gathered}
\langle k, hh' \rangle = \langle \Delta_\KK(k), h\otimes h' \rangle \;(= \langle k_{(1)},h\rangle \langle k_{(2)},h\rangle), \quad
\langle k, 1_\HH \rangle = \varepsilon_\KK(k),\\
\langle kk',h \rangle = \langle k\otimes k', \Delta_\HH(h) \rangle \;(=\langle k,h_{(1)}\rangle \langle k',h_{(2)}\rangle), \quad
\langle 1_\KK, h\rangle = \varepsilon_\HH(h)
\end{gathered}
\]
for $k,k'\in\KK$ and $h,h'\in\HH$.
\qedheree
\end{definition}
In this case,
we have $\langle k, S_{\HH}(h) \rangle = \langle S_\KK(k), h\rangle$ for $k\in\KK, h\in\HH$,
as in the non-super setting.

Let $V\in\sVect$, and let $V^*$ denote the linear dual space of $V$ (over $\kk$) .
By letting $(V^*)_{\bar\epsilon} := (V_{\bar\epsilon})^*$ ($\epsilon\in\{0,1\}$),
we make $V^*$ into an object of $\sVect$.
Then the evaluation map
\[
V^*\times V\longrightarrow\kk;
\quad (f,v) \longmapsto f(v)
\]
is a pairing on $V^*$ and $V$.

For a finite-dimensional Hopf superalgebra $\HH$,
one can make $\HH^*$ into a Hopf superalgebra, called the {\it dual Hopf superalgebra} of $\HH$,
so that the evaluation map $\langle \;,\;\rangle :\HH^*\times \HH \to \kk$ is a (non-degenerate) Hopf pairing.
Since $\HH\in\YDH{\kk\Z_2}^\fd$ and $(\kk\Z_2)^*\cong \kk\Z_2$,
the linear dual $\HH^*$ of $\HH$ may be regarded as an object in $\YDH{\kk\Z_2}^\fd$, see Section~\ref{subsec:dual-bos}.
One easily sees that $\HH^*$ actually is an object in $\sVect$ and its Hopf superalgebra structure coincides with the one defined above.

\begin{remark}
Some literature uses another definition of the dual Hopf superalgebras which we shall explain below.
For pairings $\langle\;,\;\rangle:V'\times V\to \kk$ and $\langle\;,\;\rangle':W'\times W\to \kk$,
we note that there is another way to define a pairing on $V'\otimes W'$ and $V\otimes W$ as follows.
\[
(V'\otimes W')\times (V\otimes W) \longrightarrow \kk;
\quad
(f\otimes g, v\otimes w) \longmapsto (-1)^{|g||v|}\langle f,v \rangle \langle g,w\rangle'.
\]
Let $\HH$ be a finite-dimensional Hopf superalgebra.
If we use this pairing,
then we can also make the linear dual $\HH^*$ of $\HH$ into a Hopf superalgebra, which we denote by $\HH^{\bar*}$ (just here), satisfying
\[
\begin{gathered}
\langle f, hh' \rangle = (-1)^{|f_{(2)}||h|}\langle f_{(1)},h\rangle \langle f_{(2)},h\rangle, \quad
\langle f, 1_\HH \rangle = f(1_\HH),\\
\langle fg,h \rangle = (-1)^{|h_{(1)}||g|}\langle f,h_{(1)}\rangle \langle g,h_{(2)}\rangle, \quad
\langle 1_{\HH^{\bar*}}, h\rangle = \varepsilon_\HH(h)
\end{gathered}
\]
for $f,g\in\HH^*$ and $h,h'\in\HH$.
Suppose that our base field $\kk$ contains a primitive fourth root of unity $\zeta_4$.
Then one sees that the map $\HH^*\to \HH^{\bar*}; f\mapsto \zeta_4^{|f|}f$ is a Hopf superalgebra isomorphism.
See \cite[Sections~3.1 and 3.2]{MasShi18} for the detail (see also \cite[Section~1]{Mas08}).
\qedheree
\end{remark}

\begin{example}
Let $V$ be a finite-dimensional vector space with basis $\{z_1,\dots,z_\theta\}$.
Then the evaluation map $V^*\times V\to\kk$ extends to a non-degenerate Hopf pairing $\langle\;,\;\rangle:\bigwedge(V^*)\times\bigwedge(V)\to\kk$ defined by
\[
\langle f_1\wedge \cdots \wedge f_n, v_1\wedge \cdots \wedge v_m \rangle = \delta_{n,m}\; \det\big( f_j(v_i) \big)_{i,j}
\qquad (n,m\in\mathbb{N}),
\]
where $\delta_{n,m}$ is the Kronecker symbol.
In particular, $\bigwedge(z_1,\dots,z_\theta)$ is a self-dual Hopf superalgebra.
\qedheree
\end{example}

\begin{example} \label{ex:AM-duals}
Recall the Hopf superalgebras $\HH_4^{(2)},\HH_4^{(3)},\HH_4^{(4)}$ given in Example~\ref{ex:AM}.
One sees that the pairings
\[
\langle\;,\;\rangle : \HH_4^{(2)}\times \HH_4^{(2)} \to \kk;\quad
\langle g,g \rangle =-1,\;
\langle z,z \rangle =1,\;
\langle g,z \rangle =\langle z,g \rangle =0
\]
and
\[
\langle\;,\;\rangle : \HH_4^{(3)}\times \HH_4^{(4)} \to \kk;\quad
\langle g,g \rangle =-1,\;
\langle z,z \rangle =1,\;
\langle g,z \rangle =\langle z,g \rangle =0
\]
are non-degenerate Hopf pairings.
In particular, $\HH_4^{(2)}$ is self-dual and the dual of $\HH_4^{(3)}$ is isomorphic to $\HH_4^{(4)}$.
\qedheree
\end{example}

\section{Bosonizations and super-forms} \label{sec:bos}
In this section, we also suppose that the characteristic of $\kk$ is not $2$.

\subsection{Bosonization of Hopf superalgebras} \label{subsec:boson-super}
In the following, we fix a Hopf superalgebra $\HH=(\HH;m_{\HH},u_{\HH},\Delta_{\HH},\varepsilon_{\HH},S_{\HH})$.
Since $\HH$ ($\in\sVect$) can be regarded as an object in
the category of left $\kk\Z_2$-Yetter-Drinfeld modules $\YD{\kk\Z_2}$ (see Lemma~\ref{prp:braided-mono-subcat}),
we may consider the bosonization
\[
\widehat{\HH} := \HH\biprod \kk\Z_2
\]
of $\HH$ by $\kk\Z_2$.
The Hopf algebra structure of $\widehat{\HH}$ is explicitly given as follows.
\begin{eqnarray*}
(h\biprod \sigmab^i)(h'\biprod \sigmab^j) &=& (-1)^{i|h'|} h h' \biprod  \sigmab^{i+j}, \\
1_{\widehat{\HH}} &=& 1_\HH\biprod  \eb, \\
\Delta_{\widehat{\HH}}(h\biprod \sigmab^i) &=& h_{(1)}\biprod \sigmab^{i+|h_{(2)}|}\otimes h_{(2)} \biprod \sigmab^i, \\
\varepsilon_{\widehat{\HH}}(h\biprod \sigmab^i) &=& \varepsilon_\HH(h), \\
S_{\widehat{\HH}}(h\biprod \sigmab^i) &=& (-1)^{i+|h|}S_\HH(h)\biprod \sigmab^{i+|h|}
\end{eqnarray*}
for $h,h'\in\HH$ and $i,j\in\{0,1\}$.
One easily sees that the map
\[
\G(\HH)\times \Z_2 \longrightarrow \G(\widehat{\HH}); \quad
(g,\sigmab^i) \longmapsto g\biprod \sigmab^i
\]
is a group isomorphism.

\begin{example}
The bosonization of the exterior superalgebra $\bigwedge(z_1,\dots,z_\theta)$
is isomorphic to the following Hopf algebra.
\[
\kk\langle c,x_i \mid c^2=1, x_ix_j=-x_jx_i, cx_i=-x_ic \; (i,j\in\{1,\dots,n\}) \rangle,
\]
where $c$ is group-like and $x_i$ is $c$-skew primitive $(i\in\{1,\dots,n\})$.
In particular, the bosonization of $\bigwedge(z)$ is isomorphic to Sweedler's four-dimensional Hopf algebra $H_4$
(cf.~Theorem~\ref{prp:2-dim}).
\qedheree
\end{example}

Any Hopf algebra $H$ can be naturally regarded as a Hopf superalgebra by letting $H_{\bar0}:=H$ and $H_{\bar1}:=0$.
Thus, in the classification theory of Hopf superalgebras, we are interested in {\it non}-purely even Hopf superalgebras,
that is, a Hopf superalgebra whose odd part is non-zero.
The following is easy to see, however, it is remarkable in the classification theory of Hopf superalgebras (see Theorem~\ref{thm:main1}).
\begin{lemma} \label{prp:boson=non-triv}
If $\HH_{\bar1}\neq0$, then $\widehat{\HH}$ is neither commutative nor cocommutative.
\end{lemma}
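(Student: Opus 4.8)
The plan is to fix once and for all a nonzero homogeneous odd element $z\in\HH_{\bar1}$ and to refute commutativity and cocommutativity of $\widehat{\HH}$ separately, by direct computation with the explicit formulas for the structure of $\widehat{\HH}$ recorded above, played off against the group-like element $1_\HH\biprod\sigmab$. Throughout I use that the unit and counit of $\HH$ are morphisms in $\sVect$, so that $1_\HH\in\HH_{\bar0}$ (hence $|1_\HH|=0$) and $\varepsilon_\HH$ vanishes on $\HH_{\bar1}$.

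First I treat commutativity. From the multiplication formula, $(z\biprod\eb)(1_\HH\biprod\sigmab)=(-1)^{0\cdot|1_\HH|}\,z\biprod\sigmab=z\biprod\sigmab$, whereas $(1_\HH\biprod\sigmab)(z\biprod\eb)=(-1)^{1\cdot|z|}\,z\biprod\sigmab=-z\biprod\sigmab$. As the characteristic of $\kk$ is not $2$ and $z\neq0$, these two elements are distinct, so $\widehat{\HH}$ is not commutative.

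For cocommutativity the main point—and the only step I expect to require care—is to exhibit a nonzero graded piece of $\Delta_\HH(z)$. By the grading condition $\Delta_\HH(\HH_{\bar1})\subset\HH_{\bar0}\otimes\HH_{\bar1}+\HH_{\bar1}\otimes\HH_{\bar0}$ I may write $\Delta_\HH(z)=u+v$ with $u\in\HH_{\bar0}\otimes\HH_{\bar1}$ and $v\in\HH_{\bar1}\otimes\HH_{\bar0}$, using homogeneous Sweedler representatives. Applying $\varepsilon_\HH\otimes\id$ and using $\varepsilon_\HH(\HH_{\bar1})=0$ annihilates the contribution of $v$, so the counit axiom forces $z=(\varepsilon_\HH\otimes\id)(u)$; since $z\neq0$, the component $u$ is nonzero. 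Now taking $i=0$ in the coproduct formula gives $\Delta_{\widehat{\HH}}(z\biprod\eb)=z_{(1)}\biprod\sigmab^{|z_{(2)}|}\otimes z_{(2)}\biprod\eb$. Let $P$ be the linear projection of $\widehat{\HH}\otimes\widehat{\HH}\cong\HH\otimes\kk\Z_2\otimes\HH\otimes\kk\Z_2$ onto the summand whose first $\kk\Z_2$-factor is $\kk\sigmab$. Then $P\bigl(\Delta_{\widehat{\HH}}(z\biprod\eb)\bigr)$ retains exactly the terms with $|z_{(2)}|=1$, that is, it is the image of $u$ under the map sending $a\otimes b\mapsto(a\biprod\sigmab)\otimes(b\biprod\eb)$, and this is nonzero by the previous step. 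On the other hand $\tau\circ\Delta_{\widehat{\HH}}(z\biprod\eb)=\sum z_{(2)}\biprod\eb\otimes z_{(1)}\biprod\sigmab^{|z_{(2)}|}$ has first $\kk\Z_2$-factor equal to $\kk\eb$ in every term, so $P$ annihilates it. Hence $\Delta_{\widehat{\HH}}\neq\tau\circ\Delta_{\widehat{\HH}}$ and $\widehat{\HH}$ is not cocommutative. (When $\HH$ is finite-dimensional one could instead deduce non-cocommutativity of $\widehat{\HH}$ from the non-commutativity of $\widehat{\HH^{*}}\cong\widehat{\HH}^{*}$, since $(\HH^{*})_{\bar1}=(\HH_{\bar1})^{*}\neq0$ and $\kk\Z_2$ is self-dual; but the projection argument avoids any finiteness hypothesis.)
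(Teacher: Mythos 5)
Your proof is correct and takes essentially the same route as the paper: the identical computation against $1_\HH\biprod\sigmab$ refutes commutativity, and non-cocommutativity is obtained, as in the paper, by using the counit axiom to force a nonzero graded component of $\Delta_\HH(z)$ whose image under $\Delta_{\widehat\HH}$ carries $\sigmab$ in a tensor slot where the flipped coproduct carries only $\eb$. The sole (immaterial) difference is that you isolate the $\HH_{\bar0}\otimes\HH_{\bar1}$ component via $\varepsilon_\HH\otimes\id$, whereas the paper isolates the $\HH_{\bar1}\otimes\HH_{\bar0}$ component via $\id\otimes\varepsilon_\HH$ --- mirror images of the same step, and your bookkeeping of which terms acquire $\sigmab$ is in fact the one consistent with the paper's displayed formula for $\Delta_{\widehat\HH}$.
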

\begin{proof}
By the assumption, we can take $0\neq x\in \HH_{\bar1}$.
The we have
\[
(1_\HH\biprod \sigmab)(x\biprod \eb) = -x\biprod \sigmab \neq x \biprod  \sigmab = (x\biprod \eb)(1_\HH\biprod \sigmab),
\]
and hence $\widehat{\HH}$ is not commutative.
We write $\Delta_{\HH}(x) = \sum_{i = 1}^m a_i\otimes b_i + \sum_{j = 1}^n c_j\otimes d_j$, where $a_i, d_j\in \HH_{\bar0}$, $b_i, c_j\in\HH_{\bar1}$ and $\{ c_j \}_{j=1}^n$ is linearly independent.
Then
\[
\Delta_{\widehat{\HH}}(x\biprod \eb) = \sum_{i=1}^m a_i\biprod \eb \otimes b_i\biprod  \eb + \sum_{j=1}^n c_j\biprod \sigmab\otimes d_j\biprod  \eb.
\]
If $d_j=0$ for all $j$, then we have $x = \sum_{i = 1}^m a_i\varepsilon_{\HH}(b_i)=0$, a contradiction.
Thus $d_{j'}\neq0$ for some $j'$. This implies that $x\biprod \eb$ is not cocommutative.
\end{proof}

For a left $\HH$-supermodule $V$, the action
\[
\widehat{\HH}\otimes V\longrightarrow V;
\quad
(h\biprod \sigmab^i)\otimes v\longmapsto (h\biprod \sigmab^i).v := (-1)^{i|v|} h.v
\]
makes $V$ into a left $\widehat{\HH}$-module.
This correspondence gives a category equivalence between the category $\lsmod{\HH}$ of left $\HH$-supermodules and
the category $\lmod{\widehat{\HH}}$ of left $\widehat{\HH}$-modules.
Dually, for a right $\HH$-supercomodule $V$, the coaction
\[
V\longrightarrow V\otimes \widehat{\HH};
\quad
v\longmapsto v_{(0)} \otimes (v_{(1)}\biprod  \sigmab^{|v_{(0)}|})
\]
makes $V$ into a right $\widehat{\HH}$-comodule,
where the original right $\HH$-comodule structure on $V$ is written as $V\to V\otimes \HH; v\mapsto v_{(0)}\otimes v_{(1)}$.
This gives a category equivalence between the category $\rscomod{\HH}$ of right $\HH$-supercomodules and
the category $\rcomod{\widehat{\HH}}$ of right $\widehat{\HH}$-comodules.

Therefore, we have the following observation
(cf.~Andruskiewitsch, Angiono and Yamane~\cite[Section~1.8]{AndAngYam11}, see also Masuoka~\cite[Lemma~4]{Mas12}).
\begin{lemma} \label{prp:pt/ss}
$\HH$ is semisimple (resp.~pointed) if and only if $\widehat{\HH}$ is semisimple (resp.~pointed).
\end{lemma}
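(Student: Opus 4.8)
The plan is to deduce both assertions directly from the two category equivalences established immediately before the statement, namely $\lsmod{\HH}\approx\lmod{\widehat{\HH}}$ on the module side and $\rscomod{\HH}\approx\rcomod{\widehat{\HH}}$ on the comodule side, together with the fact that semisimplicity and pointedness are each defined purely in terms of these (super)module and (super)comodule categories. The only thing to verify is that the two functors preserve the categorical features entering these definitions; since both functors leave the underlying vector space untouched and only twist the action or coaction by a sign governed by $\sigmab$, this preservation should be essentially automatic.

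For the semisimple assertion, I would argue as follows. By definition $\HH$ is semisimple precisely when $\lsmod{\HH}$ is a semisimple category, that is, every object is a direct sum of simple ones. Any category equivalence carries simple objects to simple objects and preserves direct-sum decompositions, so $\lsmod{\HH}$ is semisimple if and only if $\lmod{\widehat{\HH}}$ is semisimple. Finally, for an ordinary finite-dimensional Hopf algebra, semisimplicity is equivalent to semisimplicity of its category of left modules, so $\lmod{\widehat{\HH}}$ being semisimple is equivalent to $\widehat{\HH}$ being semisimple. Chaining these equivalences yields the claim.

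For the pointed assertion, I would use the comodule equivalence $\rscomod{\HH}\approx\rcomod{\widehat{\HH}}$. By definition $\HH$ is pointed if and only if every simple right $\HH$-supercomodule is one-dimensional, and $\widehat{\HH}$ is pointed if and only if every simple right $\widehat{\HH}$-comodule is one-dimensional. The crucial observation is that the equivalence is implemented by the functor sending a supercomodule $V$ to the same vector space $V$ equipped with the coaction $v\mapsto v_{(0)}\otimes(v_{(1)}\biprod\sigmab^{|v_{(0)}|})$; in particular this functor preserves the underlying dimension. Hence it restricts to a bijection between the one-dimensional simple objects on the two sides, and therefore $\HH$ is pointed if and only if $\widehat{\HH}$ is pointed.

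The only step requiring a little care — which I expect to be the sole, minor obstacle — is confirming that the comodule functor genuinely preserves both simplicity and dimension, so that one-dimensionality transfers in each direction. This is immediate from the explicit description of the functor recalled just before the statement, since it alters only the coaction and not the underlying space. No further computation is needed, which is why the result is ``easy to see'' and in accordance with the cited observations of Andruskiewitsch--Angiono--Yamane and Masuoka.
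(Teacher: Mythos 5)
Your proposal is correct and follows essentially the same route as the paper, which states the lemma as an immediate consequence of the two category equivalences $\lsmod{\HH}\approx\lmod{\widehat{\HH}}$ and $\rscomod{\HH}\approx\rcomod{\widehat{\HH}}$ recorded just beforehand, combined with the paper's definitions of semisimplicity and pointedness in terms of these (super)module and (super)comodule categories. The one point you spell out that the paper leaves implicit — that both functors fix the underlying vector space, hence preserve simplicity, direct sums, and dimension — is exactly the verification needed, so your write-up is a faithful elaboration of the paper's argument.
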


\begin{definition} \label{def:super-form}
Let $A$ be a Hopf algebra.
We say that $\HH$ is a {\it super-form} of $A$
if $A$ is isomorphic to $\widehat{\HH}$ as a Hopf algebra.
If $\HH$ is purely even, then $\HH$ is called a {\it trivial super-form} of $A$.
\qedheree
\end{definition}

By definition, all super-forms of a semisimple (resp.~pointed) Hopf algebra is also semisimple (resp.~pointed).

\begin{example}\label{ex:A_4}
Suppose that $\kk$ is an algebraically closed field of characteristic zero,
and let $\zeta_4\in\kk$ be a fixed primitive fourth root of unity.
According to Masuoka~\cite{Mas95b} (see also Kac and Paljutkin~\cite{KacPal66}), there is a unique (up to isomorphism) semisimple Hopf algebra of dimension $8$ that is neither commutative nor cocommutative.
We denote it by $\AM_8$. As noted in \cite[Remark~2.14]{Mas95b}, the dual of $\AM_8^*$ (which is actually isomorphic to $\AM_8$) has the following presentation:
\[
\AM_8^* = \kk \langle c,s,h \mid c^2-s^2=1,sc=cs=0,h^2=1,ch=hc,sh=-hs \rangle,
\]
where
\[
\begin{gathered}
\Delta(c)=c\otimes c-s\otimes s,\quad \varepsilon(c)=1,\quad S(c)=c, \\
\Delta(s)=c\otimes s+s\otimes c,\quad \varepsilon(s)=0,\quad S(s)=s,\\
\Delta(h)=h\otimes h+hs^2\otimes h(1-c-s),\quad \varepsilon(h)=1,\quad S(h)=h(s^2+s+1).
\end{gathered}
\]
See also Section~\ref{subsubsec:Mas}.
Note that $\G(\AM_8^*)=\{1,c+\zeta_4 s,c-\zeta_4 s,c^2+s^2\}$ ($\cong \Z_2\times\Z_2$).
One sees that the algebra map
\[
\AM_8^* \longrightarrow \widehat{\A_4(\zeta_4)};
\quad
c\mapsto x^2\biprod \sigmab,\;\;
s\mapsto -\zeta_4(x^2-z^2)\biprod \sigmab,\;\;
h\mapsto x\biprod \eb - \zeta_4 z\biprod \sigmab
\]
is a Hopf algebra isomorphism,
where $\A_4(\zeta_4)$ is given in Examples~\ref{ex:AM}.
Thus, we conclude that $\A_4(\zeta_4)$ and $\A_4(-\zeta_4)$ are non-trivial super-forms of $\AM_8^*$ and are semisimple by Lemma~\ref{prp:pt/ss}.
\qedheree
\end{example}

Suppose that $\HH$ is finite-dimensional.
Then for the dual Hopf superalgebra $\HH^*$ of $\HH$ (see Section~\ref{subsec:duals}),
we also consider its bosonization $\widehat{\HH^*}:=\widehat{(\HH^*)}$ of $\HH^*$ by $\kk\Z_2$.
From the discussion in Section~\ref{subsec:dual-bos}, we obtain the following.

\begin{lemma} \label{prp:dual}
The bilinear map
\[
\widehat{\HH^*} \times \widehat{\HH} \longrightarrow \kk; \quad
(f\biprod \sigmab^i, h\biprod \sigmab^j) \longmapsto (-1)^{ij} f(h)
\]
is a non-degenerate Hopf pairing and $\widehat{\HH^*}$ is isomorphic to the dual $(\widehat{\HH})^*$ of $\widehat{\HH}$ as a Hopf algebra.
\end{lemma}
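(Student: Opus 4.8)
The plan is to verify the three Hopf-pairing axioms for the bilinear map
\[
\langle\;,\;\rangle:\widehat{\HH^*}\times\widehat{\HH}\to\kk;\quad
(f\biprod\sigmab^i,\,h\biprod\sigmab^j)\longmapsto(-1)^{ij}f(h),
\]
and then deduce non-degeneracy and the asserted isomorphism $\widehat{\HH^*}\cong(\widehat{\HH})^*$ from general nonsense. The starting point is the evaluation Hopf pairing $\langle\;,\;\rangle:\HH^*\times\HH\to\kk$ on the dual Hopf superalgebra (Section~\ref{subsec:duals}), which is non-degenerate. The sign $(-1)^{ij}$ is exactly the bookkeeping that makes the $\Z_2$-factor pair with itself via the self-dual pairing of $\kk\Z_2$ from Lemma~\ref{prp:kZ_2}: under $\sigmab^i\mapsto\eb^*+(-1)^i\sigmab^*$ the induced pairing of $\kk\Z_2$ with itself sends $(\sigmab^i,\sigmab^j)\mapsto(-1)^{ij}$. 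So conceptually the claimed pairing is the tensor product of the superspace evaluation pairing on $\HH^*,\HH$ with the self-pairing of $\kk\Z_2$, and the content is to check that this tensor-product pairing is compatible with the bosonized multiplications and comultiplications written out explicitly in Section~\ref{subsec:boson-super}.

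\emph{First} I would check the two multiplicative axioms in $\widehat{\HH}$, i.e. that $\langle f\biprod\sigmab^i,\;(h\biprod\sigmab^j)(h'\biprod\sigmab^k)\rangle$ equals the pairing of $\Delta_{\widehat{\HH^*}}(f\biprod\sigmab^i)$ against $(h\biprod\sigmab^j)\otimes(h'\biprod\sigmab^k)$, and the symmetric statement pairing a product in $\widehat{\HH^*}$ against a coproduct in $\widehat{\HH}$. Using the explicit formula $(h\biprod\sigmab^j)(h'\biprod\sigmab^k)=(-1)^{j|h'|}hh'\biprod\sigmab^{j+k}$ and the coproduct $\Delta_{\widehat{\HH^*}}(f\biprod\sigmab^i)=f_{(1)}\biprod\sigmab^{i+|f_{(2)}|}\otimes f_{(2)}\biprod\sigmab^{i}$, each side reduces to a product of $f_{(1)}(h)f_{(2)}(h')$ times a power of $(-1)$; the whole verification is a matter of confronting the three accumulated signs $(-1)^{j|h'|}$ (from the twisted product), $(-1)^{ij},(-1)^{ik}$ (from the two $\Z_2$-pairings), and $(-1)^{|f_{(2)}|j}$ (from the shifted grading in $\Delta_{\widehat{\HH^*}}$), using that the superspace pairing on $\HH^*,\HH$ is already a Hopf pairing and that $f_{(2)}$ pairs nontrivially with $h'$ only when $|f_{(2)}|=|h'|$. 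The counit/unit axioms $\langle f\biprod\sigmab^i,1_{\widehat{\HH}}\rangle=\varepsilon_{\widehat{\HH^*}}(f\biprod\sigmab^i)$ and its mirror are immediate from $1_{\widehat{\HH}}=1_\HH\biprod\eb$ and the formulas for $\varepsilon_{\widehat{\HH^*}}$.

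\emph{Then} non-degeneracy follows because the pairing factors as a tensor product of two non-degenerate pairings (the superspace evaluation on $\HH^*,\HH$ and the perfect self-pairing of $\kk\Z_2$), so its Gram matrix is a Kronecker product of invertible matrices. Finally, a non-degenerate Hopf pairing between two finite-dimensional Hopf algebras induces a Hopf algebra map $\widehat{\HH^*}\to(\widehat{\HH})^*$, $u\mapsto\langle u,-\rangle$, which is injective by non-degeneracy and hence an isomorphism by equality of dimensions ($\dim\widehat{\HH^*}=2\dim\HH=\dim(\widehat{\HH})^*$). \textbf{The main obstacle} I expect is purely the sign bookkeeping in the multiplicative axioms: one must track the interplay of the Koszul sign in the twisted product of $\widehat{\HH}$, the grading-shift in the coproduct of $\widehat{\HH^*}$, and the two factors of $(-1)^{ij}$ coming from the $\Z_2$-pairing, and confirm that they cancel precisely against the sign already present in the super Hopf pairing on $\HH^*,\HH$. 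Packaging the computation as the tensor product of the $\HH$-pairing with the $\kk\Z_2$-self-pairing, as indicated in Section~\ref{subsec:dual-bos}, is what makes these cancellations transparent rather than miraculous.
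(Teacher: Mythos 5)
Your proof is correct, but it takes a more computational route than the paper, whose entire proof of this lemma is the sentence preceding it: the statement is obtained as an immediate specialization of the general duality framework of Section~\ref{subsec:dual-bos}. There, for finite-dimensional $H$, the assignment $(A;\iota,\pi)\mapsto(A^*;\pi^*,\iota^*)$ gives an anti-equivalence $\T_H^\fd\to\T_{H^*}^\fd$, and the commutative diagram at the end of that section pairs $\B\biprod H$ with $\B^*\biprod H^*$ by $(b\biprod h,\,f\biprod g)\mapsto f(b)g(h)$; taking $H=\kk\Z_2$ and transporting $H^*$ back to $\kk\Z_2$ along Lemma~\ref{prp:kZ_2} (under $\sigmab^i\mapsto\eb^*+(-1)^i\sigmab^*$ one has $(\sigmab^i,\sigmab^j)\mapsto(-1)^{ij}$) produces exactly the sign in the statement --- a mechanism your opening paragraph correctly identifies. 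What you do differently is to re-derive the Hopf-pairing property by hand from the explicit structure formulas of Section~\ref{subsec:boson-super}, and your sign bookkeeping is sound: in the first multiplicative axiom the apparent discrepancy between $(-1)^{|f_{(2)}|j}$ and $(-1)^{|h'|j}$ disappears because the super evaluation pairing annihilates terms with $|f_{(2)}|\neq|h'|$, and symmetrically $(-1)^{i|g|}$ matches $(-1)^{i|h_{(2)}|}$ in the second; your non-degeneracy argument (Gram matrix a Kronecker product of the evaluation matrix with the invertible matrix of the $\kk\Z_2$-self-pairing, using characteristic $\neq2$) and the dimension count $\dim\widehat{\HH^*}=2\dim\HH=\dim(\widehat{\HH})^*$ are both standard and correct, with antipode compatibility automatic for a bialgebra pairing between Hopf algebras, as the paper itself records after its definition of Hopf pairing. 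The trade-off: your verification is self-contained and makes the cancellations explicit, whereas the paper's route costs nothing locally but leans on the identifications asserted without proof in Sections~\ref{subsec:dual-bos} and~\ref{subsec:duals} (notably that the $\YD{\kk\Z_2}$-dual of $\HH$, read through $(\kk\Z_2)^*\cong\kk\Z_2$, coincides with the dual Hopf superalgebra); in that sense your argument can be viewed as supplying the computation the paper leaves implicit.
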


We summarize the above discussion and obtain the following result which is useful in the classification theory of finite-dimensional Hopf superalgebras.
\begin{proposition} \label{prp:summary}
Let $A$ be a Hopf algebra.
If $A$ has a non-trivial super-form $\HH$, then we have the following.
\begin{enumerate}
\item\label{prp:summary:(1)} The dimension of $A$ is an even number.
\item\label{prp:summary:(2)} $A$ is neither commutative nor cocommutative.
\item\label{prp:summary:(3)} $\HH$ is semisimple (resp.~pointed) if and only if $A$ is semisimple (resp.~pointed).
\item\label{prp:summary:(4)} The group $\G(A)$ is decomposed into a direct product with $\Z_2$,
that is, there exists a group $\Gamma$ such that $\G(A)\cong \Gamma\times \Z_2$ as groups.
\item\label{prp:summary:(5)} If $A$ is finite-dimensional, then the group $\G(A^*)$ is decomposed into a direct product with $\Z_2$.
\end{enumerate}
\end{proposition}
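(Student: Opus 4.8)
The plan is to obtain every item by transporting the lemmas already established across a fixed Hopf algebra isomorphism $A\cong\widehat{\HH}$, where $\HH$ is a non-trivial super-form of $A$, so that $\HH_{\bar1}\neq0$.

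For \eqref{prp:summary:(1)} I would use only that $\widehat{\HH}=\HH\biprod\kk\Z_2$ equals $\HH\otimes\kk\Z_2$ as a vector space, whence $\dim A=\dim\widehat{\HH}=2\dim\HH$ is even. Item \eqref{prp:summary:(2)} is immediate from Lemma~\ref{prp:boson=non-triv}: since $\HH_{\bar1}\neq0$, the bosonization $\widehat{\HH}$, and therefore $A$, is neither commutative nor cocommutative. Item \eqref{prp:summary:(3)} is likewise just Lemma~\ref{prp:pt/ss} read through the isomorphism $A\cong\widehat{\HH}$. For \eqref{prp:summary:(4)} I would invoke the group isomorphism $\G(\HH)\times\Z_2\cong\G(\widehat{\HH})$ recorded right after the structure formulas for $\widehat{\HH}$; setting $\Gamma:=\G(\HH)$ then gives $\G(A)\cong\G(\widehat{\HH})\cong\Gamma\times\Z_2$.

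The only item requiring a preliminary observation is \eqref{prp:summary:(5)}. Here I would first check that $\HH^*$ is again a \emph{non-trivial} super-form, this time of $A^*$: by the dual superspace structure of Section~\ref{subsec:duals} one has $(\HH^*)_{\bar1}=(\HH_{\bar1})^*\neq0$, and by Lemma~\ref{prp:dual} (together with the identifications of Section~\ref{subsec:dual-bos}) the bosonization $\widehat{\HH^*}$ is isomorphic to $(\widehat{\HH})^*\cong A^*$. Item \eqref{prp:summary:(5)} then follows by applying the already-proved \eqref{prp:summary:(4)} to $A^*$ equipped with the super-form $\HH^*$.

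Thus I expect no genuine obstacle: the content is entirely assembled from prior results along the isomorphism $A\cong\widehat{\HH}$. The single point deserving care is the passage to duals in \eqref{prp:summary:(5)}, namely verifying that dualizing preserves non-triviality of the super-form and sends the bosonization to the dual Hopf algebra; both are guaranteed by the duality discussion of Sections~\ref{subsec:duals}--\ref{subsec:dual-bos} and by Lemma~\ref{prp:dual}.
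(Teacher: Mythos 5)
Your proposal is correct and matches the paper exactly: the paper offers no separate argument for Proposition~\ref{prp:summary}, stating only that it summarizes the preceding discussion, which is precisely the assembly you carry out — $\dim\widehat{\HH}=2\dim\HH$ for \eqref{prp:summary:(1)}, Lemma~\ref{prp:boson=non-triv} for \eqref{prp:summary:(2)}, Lemma~\ref{prp:pt/ss} for \eqref{prp:summary:(3)}, the group isomorphism $\G(\HH)\times\Z_2\cong\G(\widehat{\HH})$ for \eqref{prp:summary:(4)}, and Lemma~\ref{prp:dual} together with the duality discussion for \eqref{prp:summary:(5)}. Your extra care in verifying that $\HH^*$ is again a non-trivial super-form of $A^*$ (via $(\HH^*)_{\bar1}=(\HH_{\bar1})^*\neq0$) is a sound and welcome explicit check of a point the paper leaves implicit.
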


\subsection{Admissible data for Hopf algebras} \label{subsec:YD-data}
We fix a Hopf algebra $A=(A;m_A,u_A,\Delta_A,\varepsilon_A,S_A)$.
Let $\HopfAuto(A)$ denote the group of Hopf algebra automorphisms on $A$, and let $A^\circ$ denote the {\it finite dual} Hopf algebra of $A$.
Recall that $A^\circ$ is given as $\bigcup_I (A/I)^*$, where $I$ runs through the cofinite ideals of $A$.
We regard $\alpha\in\G(A^\circ)$ as an algebra map $\alpha:A\to\kk$ as usual.

It is easy to see that the map
\[
\begin{array}{ccc}
\{\kk\Z_2\hookrightarrow A^\circ \text{: Hopf inclusion}\} &\to &\{\alpha\in\G(A^\circ)\mid \ord(\alpha)=2\};\\
\iota &\mapsto& \iota(\sigmab)
\end{array}
\]
is bijective,
where ``Hopf inclusion'' means an injective Hopf algebra map.
Since the assignment $I\mapsto (A/I)^*$ gives a one-to-one correspondence between
the the set of all cofinite Hopf ideals of $A$
and set of all finite-dimensional Hopf subalgebras of $A^\circ$,
we see that the map
\[
\begin{array}{ccc}
\{A\twoheadrightarrow (\kk\Z_2)^* \text{: Hopf surjection}\} &\to &\{\kk\Z_2\hookrightarrow A^\circ \text{: Hopf inclusion}\};\\
\pi &\mapsto& {\kk\Z_2\cong (\kk\Z_2)^{**} \cong (A/\Ker(\pi))^* \hookrightarrow A^\circ}
\end{array}
\]
is bijective,
where ``Hopf surjection'' means a surjective Hopf algebra map.

Under the identification $\kk\Z_2\cong (\kk\Z_2)^*$ given in Lemma~\ref{prp:kZ_2},
for a given order-two element $\alpha\in\G(A^\circ)$,
the corresponding surjective Hopf algebra map is
\begin{equation} \label{eq:pi}
\pi : A\longrightarrow \kk\Z_2;
\quad
a\longmapsto \frac{\varepsilon_A(a)}{2}(\eb+\sigmab) + \frac{\alpha(a)}{2}(\eb-\sigmab).
\end{equation}
On the other hand, for a given order-two element $g\in \G(A)$,
the corresponding injective Hopf algebra map is
\begin{equation} \label{eq:iota}
\iota: \kk\Z_2 \longrightarrow A;\quad
\sigmab^i \longmapsto g^i.
\end{equation}
One easily sees that
$\alpha(g)=-1$ if and only if
$\pi\circ\iota = \id_{\kk\Z_2}$ (i.e., $\pi$ is a Hopf algebra split epimorphism with section $\iota$).
With this discussion in mind, we introduce the following notation.
\begin{definition} \label{def:yddata}
A pair $(g,\alpha)\in\G(A)\times \G(A^\circ)$ is called an {\it admissible datum} for $A$
if it satisfies $\ord(g)=2$, $\ord(\alpha)=2$ and $\alpha(g)=-1$.
The set of all admissible data for $A$ is denoted by $\AD{A}$.
\qedheree
\end{definition}

Let $\CC(A)$ denote the set of all pairs $(\iota,\pi)$ of a Hopf algebra split epimorphism $\pi:A\to \kk\Z_2$ with section $\iota:\kk\Z_2\to A$.
In other words, $(A;\iota,\pi)\in\T_{\kk\Z_2}$, see Section~\ref{subsec:YD}.
The above argument shows that there is a one-to-one correspondence between $\CC(A)$ and $\AD{A}$.
\begin{center}
\begin{tikzcd}
\{\kk\Z_2\hookrightarrow A:\text{Hopf inclusion}\} \times \{A\twoheadrightarrow \kk\Z_2:\text{Hopf surjection}\}
\ar[d, leftrightarrow] \ar[r,phantom, "\supset"] & \CC(A) \ar[d, leftrightarrow, dashed] \\
\{g\in \G(A)\mid \ord(g)=2\} \times \{\alpha\in\G(A^\circ)\mid\ord(\alpha)=2\} \ar[r,phantom, "\quad\;\;\supset"] & \AD{A}
\end{tikzcd}
\end{center}

For latter use, for a given $(g,\alpha)\in\AD{A}$,
we define $\pi_{(g,\alpha)}:A\to \kk\Z_2$ by \eqref{eq:pi} and $\iota_{(g,\alpha)}:\kk\Z_2\to A$ by \eqref{eq:iota}.

\begin{remark}
We note that if $A$ is finite-dimensional,
then the bijection $\CC(A)\to \CC(A^*); (\iota,\pi)\mapsto(\pi^*,\iota^*)$ induces a bijection $\AD{A}\to\AD{A^*}; (g,\alpha)\mapsto (\alpha,g)$ under the canonical identification $A^{**}=A$.
\qedheree
\end{remark}

For $(g,\alpha),(h,\beta)\in \AD{A}$,
if there exists $\varphi\in\HopfAuto(A)$ such that $\varphi(g)=h$ and $\alpha=\beta\circ\varphi$,
then we write $(g,\alpha)\sim(h,\beta)$.
It is obvious that the relation $\sim$ forms an equivalence relation on $\AD{A}$.

\begin{lemma}
For $(g,\alpha),(h,\beta)\in \AD{A}$,
we have $(g,\alpha)\sim(h,\beta)$ if and only if $(\iota_{(g,\alpha)},\pi_{(g,\alpha)})\approx(\iota_{(h,\beta)},\pi_{(h,\beta)})$.
\end{lemma}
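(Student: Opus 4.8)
The plan is to unwind both equivalence relations and observe that they impose \emph{identical} conditions on a candidate automorphism $\varphi\in\HopfAuto(A)$. The crucial structural point is that, by their very definitions \eqref{eq:pi} and \eqref{eq:iota}, the section $\iota_{(g,\alpha)}$ depends only on $g$ while the projection $\pi_{(g,\alpha)}$ depends only on $\alpha$; hence the pair $(\iota_{(g,\alpha)},\pi_{(g,\alpha)})$ determines, and is determined by, the datum $(g,\alpha)$. First I would record the two recovery formulas. From \eqref{eq:iota} we have $\iota_{(g,\alpha)}(\sigmab)=g$, so one evaluation recovers $g$. To recover $\alpha$, I introduce the character $\chi\colon\kk\Z_2\to\kk$ with $\chi(\eb)=1$ and $\chi(\sigmab)=-1$; composing with \eqref{eq:pi} the $\eb$-terms cancel and the $\sigmab$-terms add, giving $\chi\circ\pi_{(g,\alpha)}=\alpha$, and likewise $\chi\circ\pi_{(h,\beta)}=\beta$.

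For the forward implication, suppose $(g,\alpha)\sim(h,\beta)$ via $\varphi$, so $\varphi(g)=h$ and $\alpha=\beta\circ\varphi$. Since $\varphi$ is a Hopf algebra map it fixes the unit, so $\varphi\circ\iota_{(g,\alpha)}$ and $\iota_{(h,\beta)}$ agree on $\eb$ and (by $\varphi(g)=h$) on $\sigmab$; hence $\iota_{(h,\beta)}=\varphi\circ\iota_{(g,\alpha)}$. For the projection condition I would use that $\varphi$ preserves the counit, $\varepsilon_A\circ\varphi=\varepsilon_A$; comparing the $\eb$- and $\sigmab$-coefficients in \eqref{eq:pi} of $\pi_{(h,\beta)}(\varphi(a))$ and $\pi_{(g,\alpha)}(a)$ then reduces exactly to $\beta(\varphi(a))=\alpha(a)$, which holds by hypothesis. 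Thus $\pi_{(g,\alpha)}=\pi_{(h,\beta)}\circ\varphi$, giving $(\iota_{(g,\alpha)},\pi_{(g,\alpha)})\approx(\iota_{(h,\beta)},\pi_{(h,\beta)})$.

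For the converse, let $\varphi\in\HopfAuto(A)$ witness $\approx$, i.e. $\iota_{(h,\beta)}=\varphi\circ\iota_{(g,\alpha)}$ and $\pi_{(g,\alpha)}=\pi_{(h,\beta)}\circ\varphi$. Evaluating the first identity at $\sigmab$ yields $h=\varphi(g)$. Composing the second with $\chi$ and applying the recovery formulas gives $\alpha=\chi\circ\pi_{(g,\alpha)}=\chi\circ\pi_{(h,\beta)}\circ\varphi=\beta\circ\varphi$. Hence $(g,\alpha)\sim(h,\beta)$ via the same $\varphi$, and the two relations coincide.

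The argument is purely a matter of bookkeeping; there is no genuine obstacle. The only point demanding a little care is that the $\eb$-component of \eqref{eq:pi} not interfere with the recovery of $\alpha$, which is ensured either by the identity $\varepsilon_A\circ\varphi=\varepsilon_A$ in the coefficient comparison or, more transparently, by composing with the sign character $\chi$ so that the $\varepsilon_A$-terms cancel outright.
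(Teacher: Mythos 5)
Your proof is correct and takes essentially the same route as the paper's: both simply unwind the two equivalence relations against the formulas \eqref{eq:pi} and \eqref{eq:iota}, using that $\varphi\in\HopfAuto(A)$ preserves the unit and the counit. The paper dispatches the converse with ``follows along the same argument,'' which your explicit recovery formulas $\iota_{(g,\alpha)}(\sigmab)=g$ and $\chi\circ\pi_{(g,\alpha)}=\alpha$ (for the sign character $\chi$ on $\kk\Z_2$) merely spell out.
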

\begin{proof}
Suppose that $(g,\alpha)\sim(h,\beta)$.
Then there exists $\varphi\in\HopfAuto(A)$ such that $\varphi(g)=h$ and $\alpha=\beta\circ\varphi$.
Since $\varphi$ is an algebra (resp.~coalgebra) map,
one sees that $\pi_{(h,\beta)}\circ\varphi=\pi_{(g,\alpha)}$ (resp.~$\varphi\circ\iota_{(g,\alpha)}=\iota_{(h,\beta)}$).
Thus, this $\varphi$ gives $(\iota_{(g,\alpha)},\pi_{(g,\alpha)})\approx(\iota_{(h,\beta)},\pi_{(h,\beta)})$.
The converse follows along the same argument.
\end{proof}

For simplicity, we put
\[
A^{\coo(g,\alpha)} := A^{\co(\iota_{(g,\alpha)},\pi_{(g,\alpha)})}
\]
for an admissible datum $(g,\alpha)\in\AD{A}$ for $A$.
Then by Proposition~\ref{prp:YD-one-to-one}, we have the following result.
\begin{proposition} \label{prp:one-to-one}
The assignment
$\AD{A}\to \YDH{\kk\Z_2};$
$(g,\alpha) \mapsto A^{\coo(g,\alpha)}$
gives a one-to-one correspondence between
$\AD{A}/\!\sim$ and
\[
\{\B \in \YDH{\kk\Z_2} \mid \B\biprod \kk\Z_2 \text{ is isomorphic to } A \text{ as a Hopf algebra} \}/\!\cong.
\]
\end{proposition}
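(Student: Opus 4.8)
The plan is to exhibit the stated map as the composite of two bijections already assembled in the text, so that the proposition follows by formal bookkeeping. Write $\CC(A)$ for the set of pairs $(\iota,\pi)$ with $(A;\iota,\pi)\in\T_{\kk\Z_2}$. The bijection between $\CC(A)$ and $\AD{A}$ recorded above carries $(g,\alpha)$ to $(\iota_{(g,\alpha)},\pi_{(g,\alpha)})$, with $\iota_{(g,\alpha)}$ and $\pi_{(g,\alpha)}$ given by \eqref{eq:iota} and \eqref{eq:pi}. First I would invoke the preceding Lemma, which says precisely that this bijection intertwines the relation $\sim$ on $\AD{A}$ with the relation $\approx$ on $\CC(A)$; hence it descends to a bijection $\AD{A}/\!\sim\ \cong\ \CC(A)/\!\approx$.

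Next I would specialize Proposition~\ref{prp:YD-one-to-one} to $H=\kk\Z_2$. Since the antipode of $\kk\Z_2$ is involutive, hence bijective, the standing hypotheses of Sections~\ref{subsec:YD} and \ref{subsec:boson} hold, and that proposition yields a one-to-one correspondence between the equivalence classes of pairs $(\iota,\pi)$ with $(A;\iota,\pi)\in\T_{\kk\Z_2}$---that is, $\CC(A)/\!\approx$---and the isomorphism classes of Hopf algebras $\B$ in $\YD{\kk\Z_2}$ with $\B\biprod\kk\Z_2\cong A$. By construction this correspondence is induced by the equivalence $\mathcal{F}$, so the class of $(\iota,\pi)$ is sent to the class of $\mathcal{F}(A;\iota,\pi)=A^{\co(\iota,\pi)}$. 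Composing the two bijections and unwinding the abbreviation $A^{\coo(g,\alpha)}:=A^{\co(\iota_{(g,\alpha)},\pi_{(g,\alpha)})}$, I find that $(g,\alpha)\mapsto A^{\coo(g,\alpha)}$ is exactly this composite, which is therefore the desired bijection.

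The remaining points are routine rather than genuine obstacles. I would check that the assignment is well defined on $\sim$-classes, which is immediate from the preceding Lemma together with the fact that $\mathcal{F}$ sends $\approx$-equivalent pairs to isomorphic objects of $\YDH{\kk\Z_2}$, and I would confirm that the target set in the statement is verbatim the target set of Proposition~\ref{prp:YD-one-to-one} when $H=\kk\Z_2$. The most delicate step is ensuring that the bijection underlying Proposition~\ref{prp:YD-one-to-one} is genuinely realized by $\B=A^{\co(\iota,\pi)}$, and not merely by some abstract quasi-inverse; this is what the explicit equivalences $\mathcal{F}\circ\mathcal{G}\simeq\id$ and $\mathcal{G}\circ\mathcal{F}\simeq\id$---the latter via the Hopf algebra isomorphism \eqref{eq:four}---guarantee, and I would cite them to close the argument.
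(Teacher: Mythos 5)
Your proof is correct and follows essentially the same route as the paper, which derives the proposition immediately from the preceding lemma (identifying $\sim$ on $\AD{A}$ with $\approx$ on pairs $(\iota,\pi)$) together with Proposition~\ref{prp:YD-one-to-one} specialized to $H=\kk\Z_2$. Your additional care in noting that the correspondence is realized concretely by $\mathcal{F}(A;\iota,\pi)=A^{\co(\iota,\pi)}$ via the natural isomorphisms $\mathcal{F}\circ\mathcal{G}\simeq\id$ and $\mathcal{G}\circ\mathcal{F}\simeq\id$ is a legitimate filling-in of detail the paper leaves implicit, not a different argument.
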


\subsection{Super-data for Hopf algebras} \label{subsec:coinv-sub}
In the following, we also fix a Hopf algebra $A=(A;m_A,u_A,\Delta_A,\varepsilon_A,S_A)$
and take an admissible datum $(g,\alpha)\in \AD{A}$ for $A$.
Then by Proposition~\ref{prp:one-to-one},
the coinvariant subalgebra
\[
\B:=A^{\coo(g,\alpha)}
\]
of $A$ is a Hopf algebra in $\YD{\kk\Z_2}$.
In this section, we give a criterion for $\B$ to be an object of $\sVect$.
For $a\in A$, we use the following usual notations.
\[
\alpha\hits a := a_{(1)} \alpha(a_{(2)}),
\quad
a\hitted \alpha:= \alpha(a_{(1)}) a_{(2)}.
\]
Note that $\alpha\hits(-),(-)\hitted \alpha$ are algebra maps and
$\alpha \hits a \hitted \alpha := (\alpha\hits a) \hitted \alpha = \alpha\hits(a\hitted \alpha)$ for all $a\in A$.

\begin{proposition} \label{prp:B=hits}
We have $\B=\{ b\in A \mid b = \alpha \hits b \}$.
\end{proposition}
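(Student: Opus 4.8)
The plan is to unwind the definition of the coinvariant subspace and reduce it to a direct computation with the explicit formula \eqref{eq:pi} for $\pi_{(g,\alpha)}$. Recall from Section~\ref{subsec:YD} that the object $(A;\iota_{(g,\alpha)},\pi_{(g,\alpha)})\in\T_{\kk\Z_2}$ makes $A$ into a right $\kk\Z_2$-Hopf module, whose right $\kk\Z_2$-coaction is $a\mapsto a_{(1)}\otimes \pi_{(g,\alpha)}(a_{(2)})$. By definition $\B=A^{\coo(g,\alpha)}=A^{\co(\kk\Z_2)}$ consists exactly of those $a\in A$ satisfying $a_{(1)}\otimes\pi_{(g,\alpha)}(a_{(2)})=a\otimes\eb$, so everything comes down to rewriting this coinvariance equation.

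The heart of the matter is to expand $\pi_{(g,\alpha)}(a_{(2)})$ in the basis $\{\eb,\sigmab\}$ of $\kk\Z_2$. First I would rewrite \eqref{eq:pi} as $\pi_{(g,\alpha)}(a)=\tfrac{1}{2}(\varepsilon_A(a)+\alpha(a))\,\eb+\tfrac{1}{2}(\varepsilon_A(a)-\alpha(a))\,\sigmab$, so that the coaction becomes
\[
a_{(1)}\otimes\pi_{(g,\alpha)}(a_{(2)})=\tfrac{1}{2}\big(a_{(1)}\varepsilon_A(a_{(2)})+a_{(1)}\alpha(a_{(2)})\big)\otimes\eb+\tfrac{1}{2}\big(a_{(1)}\varepsilon_A(a_{(2)})-a_{(1)}\alpha(a_{(2)})\big)\otimes\sigmab.
\]
Using the counit axiom $a_{(1)}\varepsilon_A(a_{(2)})=a$ and the definition $\alpha\hits a=a_{(1)}\alpha(a_{(2)})$, the $\eb$-component is $\tfrac{1}{2}(a+\alpha\hits a)$ and the $\sigmab$-component is $\tfrac{1}{2}(a-\alpha\hits a)$.

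Finally I would compare with $a\otimes\eb$: since $\{\eb,\sigmab\}$ is a basis of $\kk\Z_2$, the coinvariance condition is equivalent to the pair of equations $\tfrac{1}{2}(a+\alpha\hits a)=a$ and $\tfrac{1}{2}(a-\alpha\hits a)=0$, each of which in turn is equivalent to the single condition $\alpha\hits a=a$. This yields $\B=\{b\in A\mid b=\alpha\hits b\}$, as claimed. I do not anticipate a serious obstacle here; the only points requiring care are correctly identifying the coaction induced by $(\iota_{(g,\alpha)},\pi_{(g,\alpha)})$ rather than some variant, and tracking the two basis components, which I expect to collapse to the same equation.
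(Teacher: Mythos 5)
Your proposal is correct and matches the paper's own proof essentially verbatim: the paper likewise computes $a_{(1)}\otimes \pi_{(g,\alpha)}(a_{(2)}) = \frac12\big(a+(\alpha\hits a)\big)\otimes\eb + \frac12\big(a-(\alpha\hits a)\big)\otimes\sigmab$ and reads off that $a\in\B$ if and only if $a=\alpha\hits a$. Your extra care in identifying the coaction as $(\id_A\otimes\pi_{(g,\alpha)})\circ\Delta_A$ and comparing components in the basis $\{\eb,\sigmab\}$ is exactly the ``direct computation'' the paper leaves implicit.
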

\begin{proof}
For $a\in A$, a direct computation shows that
\[
a_{(1)}\otimes \pi_{(g,\alpha)}(a_{(2)})
=\frac12 (a+(\alpha\hits a)) \otimes \eb + \frac12 (a-(\alpha\hits a))\otimes \sigmab.
\]
Thus, $a\in \B$ if and only if $a= \alpha\hits a$.
\end{proof}

The left $\kk\Z_2$-action on $\B$ is explicitly given by
\[
\kk\Z_2\otimes \B\longrightarrow \B;
\quad
\sigmab^i\otimes b \longmapsto \sigmab^i\triangleright b = g^ibg^i,
\]
see Proposition~\ref{prp:coinv-sub}.
The left $\kk\Z_2$-coaction on $\B$ can be rephrased as follows.
\begin{lemma}
For $a\in A$ and $i\in\{0,1\}$,
we have $(\pi_{(g,\alpha)}\otimes \id_A)\Delta_A(a)=\sigmab^i\otimes a$
if and only if $a\hitted \alpha = (-1)^i a$.
\end{lemma}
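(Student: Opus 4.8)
The plan is to compute the left $\kk\Z_2$-coaction $(\pi_{(g,\alpha)}\otimes\id_A)\Delta_A(a)$ explicitly and then read off the two equivalences by comparing graded components; this is precisely the dual of the computation carried out in the proof of Proposition~\ref{prp:B=hits} for the right coaction. Writing $\Delta_A(a)=a_{(1)}\otimes a_{(2)}$ and substituting the definition \eqref{eq:pi} of $\pi_{(g,\alpha)}$ into the first tensorand, I would first obtain
\[
(\pi_{(g,\alpha)}\otimes\id_A)\Delta_A(a)
= \tfrac12(\eb+\sigmab)\otimes \varepsilon_A(a_{(1)})a_{(2)}
+ \tfrac12(\eb-\sigmab)\otimes \alpha(a_{(1)})a_{(2)},
\]
moving the scalars $\varepsilon_A(a_{(1)})$ and $\alpha(a_{(1)})$ past the fixed element $\tfrac12(\eb\pm\sigmab)$.

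Next I would simplify the two tensorands using elementary Hopf-algebraic identities: the counit axiom gives $\varepsilon_A(a_{(1)})a_{(2)}=a$, while the very definition of the right hit action gives $\alpha(a_{(1)})a_{(2)}=a\hitted\alpha$. Collecting the $\eb$- and $\sigmab$-parts then yields
\[
(\pi_{(g,\alpha)}\otimes\id_A)\Delta_A(a)
= \tfrac12\,\eb\otimes\big(a+a\hitted\alpha\big)
+ \tfrac12\,\sigmab\otimes\big(a-a\hitted\alpha\big).
\]

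Finally, since $\eb$ and $\sigmab$ form a basis of $\kk\Z_2$, an element of $\kk\Z_2\otimes A$ is uniquely determined by its two components, so I would conclude by matching coefficients against $\sigmab^i\otimes a$. For $i=0$ the $\sigmab$-component must vanish, which is exactly $a\hitted\alpha=a$, and this in turn forces the $\eb$-component to equal $\tfrac12(a+a)=a$; for $i=1$ the $\eb$-component must vanish, which is exactly $a\hitted\alpha=-a$, forcing the $\sigmab$-component to equal $\tfrac12(a+a)=a$. Both directions then follow simultaneously, giving the stated equivalence $a\hitted\alpha=(-1)^i a$. I do not expect any genuine obstacle here: the entire content is the single displayed identity, and the only point requiring care is the sign bookkeeping that pairs $(-1)^i$ with the correct graded component.
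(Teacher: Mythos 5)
Your proof is correct and is essentially the argument the paper intends: the paper states this lemma without proof, as it is the left-coaction mirror of the direct computation in the proof of Proposition~\ref{prp:B=hits}, which is exactly the identity $(\pi_{(g,\alpha)}\otimes\id_A)\Delta_A(a)=\tfrac12\,\eb\otimes(a+a\hitted\alpha)+\tfrac12\,\sigmab\otimes(a-a\hitted\alpha)$ that you derive. Your component-matching conclusion (valid since $\mathrm{char}\,\kk\neq2$) matches the paper's implicit reasoning, so there is nothing to add.
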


Thus, if we let
\[
\B_{i,j}:=\{b\in \B \mid b\hitted\alpha=(-1)^i b \text{ and } gbg=(-1)^jb \}
\quad(i,j\in\{0,1\}),
\]
then $\B$ decomposes as $\B=\bigoplus_{i,j\in\{0,1\}} \B_{i,j}$.
The following is a criterion for $\B$ to be a Hopf superalgebra such that $\widehat{\B} \cong A$.
\begin{proposition} \label{prp:super-criteria1}
For the coinvariant subalgebra $\B=A^{\coo(g,\alpha)}$ of $A$,
the following assertions are equivalent.
\begin{enumerate}
\item\label{prp:super-criteria1:(1)} $\B$ is a super-form of $A$.
\item\label{prp:super-criteria1:(2)} $b\hitted \alpha=gbg$ for all $b\in\B$.
\item\label{prp:super-criteria1:(3)} $\alpha\hits a \hitted\alpha = gag$ for all $a\in A$.
\end{enumerate}
\end{proposition}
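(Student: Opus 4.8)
The plan is to establish the two equivalences $(1)\Leftrightarrow(2)$ and $(2)\Leftrightarrow(3)$ separately; the first is a translation of the definition of $\sVect$, and the second is the only computational point.

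For $(1)\Leftrightarrow(2)$, I would first recall from Proposition~\ref{prp:one-to-one} that the bosonization $\widehat{\B}=\B\biprod\kk\Z_2$ is already isomorphic to $A$ as a Hopf algebra; hence $\B$ is a super-form of $A$ precisely when the Hopf algebra $\B$ in $\YD{\kk\Z_2}$ lies in the full monoidal subcategory $\sVect$ of Lemma~\ref{prp:braided-mono-subcat}, in which case all its structure maps are automatically morphisms in $\sVect$ and $\B$ becomes a Hopf superalgebra. By the description of $\sVect$ inside $\YD{\kk\Z_2}$, an object belongs to $\sVect$ exactly when its $\kk\Z_2$-action agrees with the involution determined by its $\kk\Z_2$-grading, i.e.\ when $\sigmab\triangleright b=(-1)^{|b|}b$ with $|b|$ the comodule degree. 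Here $\sigmab\triangleright b=gbg$, and by the lemma describing the $\kk\Z_2$-coaction the comodule degree $i$ of $b$ is characterised by $b\hitted\alpha=(-1)^i b$; thus $\B\in\sVect$ if and only if the involutions $b\mapsto gbg$ and $b\mapsto b\hitted\alpha$ of $\B$ coincide, which is exactly condition~(2).

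For $(2)\Leftrightarrow(3)$, I would package the operations $\tau_L:=\alpha\hits(-)$, $\tau_R:=(-)\hitted\alpha$ and $\kappa:=g(-)g$ as commuting algebra maps of $A$; since $\ord(\alpha)=\ord(g)=2$ each is an involution, so (using $\mathrm{char}\,\kk\neq2$) $A$ decomposes as $A=A^+\oplus A^-$ into the $\pm1$-eigenspaces of $\tau_L$, with $A^+=\B$ by Proposition~\ref{prp:B=hits}. In this language $(3)$ reads $\tau_L\tau_R=\kappa$ on all of $A$ and $(2)$ reads $\tau_R=\kappa$ on $\B$, so $(3)\Rightarrow(2)$ is immediate: on $\B=A^+$ one has $\tau_L=\id$, whence $\tau_L\tau_R=\kappa$ becomes $\tau_R=\kappa$.

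The substance is $(2)\Rightarrow(3)$, i.e.\ promoting an identity on the eigenspace $\B$ to all of $A$, and this is the step I expect to be the main (though short) obstacle. The decisive observation is that $\alpha(g)=-1$ forces $g\in A^-$: indeed $\tau_L(g)=g\,\alpha(g)=-g$ and likewise $\tau_R(g)=-g$, while $\kappa(g)=g$. Consequently left multiplication by $g$ swaps $A^+$ and $A^-$, giving $A=\B\oplus g\B$, and it suffices to verify $\tau_L\tau_R=\kappa$ on each summand. On $\B$, using (2) together with $\tau_L(b)=b$ and $\tau_L(g)=-g$, one gets $\tau_L\tau_R(b)=\tau_L(gbg)=gbg=\kappa(b)$; on $g\B$, writing $a=gb$ with $b\in\B$, one computes $\tau_R(gb)=(-g)(gbg)=-bg$ and then $\tau_L(-bg)=bg=\kappa(gb)$, again from (2) and the values of $\tau_L,\tau_R,\kappa$ on $g$. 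By linearity this yields (3) on all of $A$, closing the cycle of implications.
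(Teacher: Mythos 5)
Your proof is correct and takes essentially the same route as the paper: (1)$\Leftrightarrow$(2) by observing that $\B\in\sVect$ exactly when the coaction-involution $(-)\hitted\alpha$ agrees on $\B$ with the conjugation $g(-)g$ (the paper phrases this via the decomposition $\B=\bigoplus_{i,j}\B_{i,j}$), and (2)$\Leftrightarrow$(3) by splitting $A$ into $\B$ and its $g$-translate and computing with the algebra maps $\alpha\hits(-)$ and $(-)\hitted\alpha$, whose value on $g$ is $-g$. The only cosmetic difference is that the paper obtains the decomposition $A=\B\oplus\B g$ from the bosonization isomorphism $\B\biprod\kk\Z_2\cong A$ of \eqref{eq:four}, whereas you derive $A=\B\oplus g\B$ directly as the eigenspace decomposition of the involution $\alpha\hits(-)$ (using $\mathrm{char}\,\kk\neq2$ and $\alpha(g)=-1$), a self-contained substitute for the same fact.
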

\begin{proof}
The decomposition of $\B$ shows that $\B\in\sVect$ if and only if $\B_{0,1}=\B_{1,0}=0$,
and hence the conditions \eqref{prp:super-criteria1:(1)} and \eqref{prp:super-criteria1:(2)} are equivalent.
By Proposition~\ref{prp:B=hits}, we see that
\[
\alpha\hits(bg^i)\hitted\alpha = (b\hitted \alpha)g^i
\quad\text{and}\quad
g(bg^i)g = (gbg)g^i
\]
for all $b\in\B,i\in\{0,1\}$.
Since $\B\biprod \kk\Z_2\to A; b\biprod  \sigmab^i\mapsto bg^i$ is an isomorphism (see \eqref{eq:four}),
the conditions \eqref{prp:super-criteria1:(2)} and \eqref{prp:super-criteria1:(3)} are equivalent.
The proof is done.
\end{proof}

If $\B=A^{\coo(g,\alpha)}$ is a super-form of $A$,
then $\B=\B_{\bar0}\oplus\B_{\bar1}$ with
\begin{equation} \label{eq:B-parity}
\B_{\bar\epsilon} = \B_{\epsilon,\epsilon} = \{ b\in \B \mid gbg=(-1)^\epsilon b \}
\end{equation}
for $\epsilon\in\{0,1\}$.
This implies the following criterion.
\begin{proposition} \label{prp:super-criteria2}
If $\B=A^{\coo(g,\alpha)}$ satisfies one of the conditions given in Proposition~\ref{prp:super-criteria1},
then the following assertions are equivalent.
\begin{enumerate}
\item\label{prp:super-criteria2:(1)} The super-form $\B$ of $A$ is non-trivial.
\item\label{prp:super-criteria2:(2)} There exists $b\in\B$ such that $b\neq0$ and $b\hitted \alpha =-b$.
\item\label{prp:super-criteria2:(3)} $g\notin Z(A)$, where $Z(A)$ is the center of $A$.
\end{enumerate}
\end{proposition}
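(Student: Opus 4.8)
The plan is to read off all three conditions from the $\Z_2$-grading $\B=\B_{\bar0}\oplus\B_{\bar1}$ of \eqref{eq:B-parity}, combined with the vector-space isomorphism $\B\biprod\kk\Z_2\cong A$ of \eqref{eq:four}, which identifies $A$ with $\B\oplus\B g$ (the map being $b\biprod\sigmab^i\mapsto bg^i$). Throughout I would keep in mind that $\B$ already satisfies the conditions of Proposition~\ref{prp:super-criteria1}; in particular $b\hitted\alpha=gbg$ for every $b\in\B$.

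First I would dispose of \eqref{prp:super-criteria2:(1)}$\Leftrightarrow$\eqref{prp:super-criteria2:(2)}. By definition, non-triviality of the super-form $\B$ means exactly $\B_{\bar1}\neq0$. Since $b\hitted\alpha=gbg$ on $\B$, the condition $b\hitted\alpha=-b$ is identical to $gbg=-b$, that is, $b\in\B_{\bar1}$ by \eqref{eq:B-parity}. Hence the existence of a nonzero $b\in\B$ with $b\hitted\alpha=-b$ is literally the assertion $\B_{\bar1}\neq0$, and the equivalence is immediate.

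The real content is \eqref{prp:super-criteria2:(1)}$\Leftrightarrow$\eqref{prp:super-criteria2:(3)}. Here I would study the conjugation map $\gamma\colon A\to A$, $a\mapsto gag$. Since $g^2=1_A$ it is an involution, so in characteristic $\neq2$ the space $A$ splits into $\gamma$-eigenspaces for $\pm1$; moreover $g\in Z(A)$ holds if and only if $gag=a$ for all $a$, i.e. if and only if $\gamma=\id_A$, i.e. if and only if the $(-1)$-eigenspace of $\gamma$ vanishes. The key point is that $\gamma$ respects the decomposition $A=\B\oplus\B g$: for $b\in\B$ one has $\gamma(b)=gbg=\sigmab\triangleright b\in\B$, while for $c\in\B$ one computes $\gamma(cg)=g(cg)g=gc=(gcg)g\in\B g$, using $g^2=1_A$. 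Reading off eigenvalues via \eqref{eq:B-parity}, $\gamma$ acts as $(-1)^\epsilon$ on $\B_{\bar\epsilon}$, and since $gcg=(-1)^\epsilon c$ for $c\in\B_{\bar\epsilon}$ it also acts as $(-1)^\epsilon$ on $\B_{\bar\epsilon}g$. Therefore the $(-1)$-eigenspace of $\gamma$ equals $\B_{\bar1}\oplus\B_{\bar1}g$, which vanishes if and only if $\B_{\bar1}=0$ (the map $c\mapsto cg$ being a bijection of $\B_{\bar1}$ onto $\B_{\bar1}g$). Combining, $g\in Z(A)\Leftrightarrow\B_{\bar1}=0$, which is precisely the contrapositive of \eqref{prp:super-criteria2:(3)}$\Leftrightarrow$\eqref{prp:super-criteria2:(1)}.

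I expect the only delicate step to be the bookkeeping on the summand $\B g$: checking that $\gamma$ preserves it and that its eigenvalue there is again governed by the $\Z_2$-parity of the $\B$-component, through the identity $\gamma(cg)=(gcg)g$. Everything else is immediate from \eqref{eq:B-parity} and Proposition~\ref{prp:super-criteria1}. In particular the easy implication, that $g\in Z(A)$ forces $\B_{\bar1}=0$, can be seen directly without the eigenspace analysis, since centrality gives $gbg=g^2b=b$ for all $b$, hence $gbg=-b$ forces $2b=0$ and so $b=0$ in characteristic $\neq2$.
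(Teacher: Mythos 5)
Your proof is correct and takes essentially the same route the paper intends: the paper states the proposition as an immediate consequence of the parity description \eqref{eq:B-parity}, and your argument simply fills in that reading, with \eqref{prp:super-criteria2:(1)}$\Leftrightarrow$\eqref{prp:super-criteria2:(2)} coming from $b\hitted\alpha=gbg$ on $\B$. Your eigenspace analysis of $\gamma(a)=gag$ on $A=\B\oplus\B g$ is sound but slightly longer than needed; since $A$ is generated by $\B$ and $g$, one has $g\in Z(A)$ iff $gbg=b$ for all $b\in\B$, which by \eqref{eq:B-parity} is exactly $\B_{\bar1}=0$.
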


\begin{definition} \label{def:SD}
An admissible datum $(g,\alpha) \in \AD{A}$ for $A$ is called a {\it super-datum} for $A$ if it satisfies
$g\notin Z(A)$ and $\alpha\hits a \hitted \alpha = gag$ for all $a\in A$.
The set of all super-datum for $A$ is denoted by $\SD{A}$.
\qedheree
\end{definition}

Then by definition,
the assignment $(g,\alpha) \mapsto A^{\coo(g,\alpha)}$ induces a bijection
\[
\SD{A}/\!\sim \,\,\longrightarrow \{\HH\text{: Hopf superalgebra} \mid \HH_{\bar1}\neq0 \text{ and }\widehat{\HH}\cong A \}/\!\cong.
\]
Let $(g,\alpha)\in\SD{A}$.
For $i\in\{0,1\}$ and $a\in A$, we put
\[
\Delta_A^i(a) := \frac12 a_{(1)}(1_A + (-1)^i g) \otimes a_{(2)}.
\]
The comultiplication and the antipode of the Hopf superalgebra structure can write down as follows (cf.~\eqref{eq:AEG-Delta-S}).
\begin{proposition} \label{prp:Delta-S}
Let $\epsilon\in\{0,1\}$, and let $\B:=A^{\coo(g,\alpha)}$.
For $b\in \B_{\bar\epsilon}$, we get $\Delta_\B(b) = \Delta_A^0(b) - (-1)^\epsilon (g\otimes 1_A)\Delta_A^1(b)$
and $S_\B(b)= g^{\epsilon} S_A(b)$.
\end{proposition}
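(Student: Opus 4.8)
The plan is to specialize the general formulas for the coinvariant subalgebra from Proposition~\ref{prp:coinv-sub} to the case $H=\kk\Z_2$ and then compute. Since $\sigmab$ is group-like of order two, the antipode of $\kk\Z_2$ is the identity, so both structure maps of $\B=A^{\coo(g,\alpha)}$ are governed by the single linear endomorphism $\Phi:=\iota_{(g,\alpha)}\circ\pi_{(g,\alpha)}$ of $A$. Using \eqref{eq:iota} and \eqref{eq:pi} I would first record the explicit form $\Phi(a)=\tfrac12\varepsilon_A(a)(1_A+g)+\tfrac12\alpha(a)(1_A-g)$, so that $\Delta_\B(b)=b_{(1)}\Phi(b_{(2)})\otimes b_{(3)}$ and $S_\B(b)=\Phi(b_{(1)})S_A(b_{(2)})$.

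For the antipode I would substitute this $\Phi$ into $S_\B(b)=\Phi(b_{(1)})S_A(b_{(2)})$ and split into the $\varepsilon_A$- and $\alpha$-parts. The first collapses by the counit axiom to $\tfrac12(1_A+g)S_A(b)$, while in the second one recognizes $\alpha(b_{(1)})S_A(b_{(2)})=S_A(b\hitted\alpha)$; since $b\in\B_{\bar\epsilon}$ satisfies $b\hitted\alpha=(-1)^\epsilon b$ by \eqref{eq:B-parity}, this part equals $\tfrac{(-1)^\epsilon}{2}(1_A-g)S_A(b)$. Adding the two and using $g^2=1_A$ gives $S_\B(b)=\tfrac12\big[(1_A+g)+(-1)^\epsilon(1_A-g)\big]S_A(b)=g^\epsilon S_A(b)$, as claimed.

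The comultiplication is treated the same way: substituting $\Phi$ into $\Delta_\B(b)=b_{(1)}\Phi(b_{(2)})\otimes b_{(3)}$, the $\varepsilon_A$-part collapses immediately to $\Delta_A^0(b)$, and it remains to identify the $\alpha$-part $T:=\tfrac12\,b_{(1)}\alpha(b_{(2)})(1_A-g)\otimes b_{(3)}$ with $-(-1)^\epsilon(g\otimes 1_A)\Delta_A^1(b)$. This is the step I expect to be the main obstacle, because here $\alpha$ is contracted against the \emph{middle} Sweedler leg, so neither $b\in\B$ (Proposition~\ref{prp:B=hits}) nor the parity condition \eqref{eq:B-parity} applies directly. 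My plan is first to extract the usable identity $\alpha\hits a=g(a\hitted\alpha)g$ for all $a\in A$ from the super-datum condition $\alpha\hits a\hitted\alpha=gag$ (Definition~\ref{def:SD}): applying $(-)\hitted\alpha$ to both sides and using that $(-)\hitted\alpha$ is an algebra map, that $\alpha$ has order two so that $(-)\hitted\alpha$ applied twice is the identity, and that $g\hitted\alpha=\alpha(g)g=-g$.

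Then, by coassociativity I would rewrite $b_{(1)}\alpha(b_{(2)})\otimes b_{(3)}=(\alpha\hits b_{(1)})\otimes b_{(2)}$, apply the auxiliary identity to $b_{(1)}$, use $g(1_A-g)=-(1_A-g)$ coming from $g^2=1_A$ to pull the sign out, and reposition the remaining contraction as $(b_{(1)}\hitted\alpha)\otimes b_{(2)}=\Delta_A(b\hitted\alpha)$ (again by coassociativity). Finally the parity relation $b\hitted\alpha=(-1)^\epsilon b$ turns $T$ into $-(-1)^\epsilon\,\tfrac12\,g\,b_{(1)}(1_A-g)\otimes b_{(2)}=-(-1)^\epsilon(g\otimes 1_A)\Delta_A^1(b)$, yielding $\Delta_\B(b)=\Delta_A^0(b)-(-1)^\epsilon(g\otimes 1_A)\Delta_A^1(b)$. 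The only real care needed throughout is the Sweedler-index bookkeeping when moving the $\alpha$-contraction and the $g$-conjugation between the tensor legs.
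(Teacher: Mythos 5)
Your proposal is correct and takes essentially the same approach as the paper: expand $\iota_{(g,\alpha)}\circ\pi_{(g,\alpha)}$ into its $\tfrac12(1_A+g)\varepsilon_A$- and $\tfrac12(1_A-g)\alpha$-parts, identify the $\varepsilon_A$-part with $\Delta_A^0(b)$ (resp.\ with $\tfrac12(1_A+g)S_A(b)$), and then use the super-datum condition together with the parity relation \eqref{eq:B-parity} to rewrite the $\alpha$-part. The only difference is bookkeeping: the paper pushes the $\alpha$-contraction onto the last tensor leg and chains two tensor identities obtained from $b=\alpha\hits b$ and $\alpha\hits a\hitted\alpha=gag$, whereas you push it onto the first leg via the equivalent pointwise identity $\alpha\hits a=g(a\hitted\alpha)g$; both arguments rest on exactly the same ingredients.
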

\begin{proof}
Since the antipode of $\kk\Z_2$ is identical, we get
\[
\Delta_\B(b)
= b_{(1)} (\iota_{(g,\alpha)}\circ\pi_{(g,\alpha)})(b_{(2)}) \otimes b_{(3)}
= \Delta_A^0(b) + \frac12 b_{(1)} (1_A-g) \otimes (b_{(2)}\hitted \alpha),
\]
where we write $\Delta_A(b)=b_{(1)}\otimes b_{(2)}$.
By Proposition~\eqref{prp:B=hits}, we have $\Delta_A(b) = \Delta_A(\alpha\hits b) = b_{(1)} \otimes (\alpha\hits b_{(2)})$.
Combining this equation with Proposition~\ref{prp:super-criteria1}\eqref{prp:super-criteria1:(2)},
we get
\begin{equation} \label{eq:Delta-S:(1)}
b_{(1)}\otimes (b_{(2)}\hitted \alpha) = b_{(1)} \otimes gb_{(2)}g.
\end{equation}
On the other hand, by \eqref{eq:B-parity}, we get $\Delta_A(b) = (-1)^\epsilon \Delta_A(gbg)$,
and hence we have
\begin{equation} \label{eq:Delta-S:(2)}
gb_{(1)}g \otimes b_{(2)} = (-1)^\epsilon b_{(1)}\otimes gb_{(2)}g.
\end{equation}
Thus, we obtain
\begin{eqnarray*}
\frac12 b_{(1)} (1_A-g) \otimes (b_{(2)}\hitted \alpha)
&\overset{\eqref{eq:Delta-S:(1)}}=& \frac12 b_{(1)} (1_A-g) \otimes  gb_{(2)}g \\
&\overset{\eqref{eq:Delta-S:(2)}}=& -(-1)^\epsilon \frac12 gb_{(1)} (1_A-g) \otimes b_{(2)}.
\end{eqnarray*}
This proves the first claim.
The second claim follows directly.
\end{proof}

For reader's convince, we give a summary of our results as follows.
\begin{theorem} \label{thm:super-form}
Let $(g,\alpha)\in\SD{A}$, and let $\HH:=A^{\coo(g,\alpha)}$ be the coinvariant subalgebra of $A$ with respect to $(g,\alpha)$.
Then
\[
\HH=\{ b\in A \mid \alpha\hits b = b\},\quad
\HH_{\bar\epsilon} = \{ b\in \HH \mid gbg=(-1)^{\epsilon} b \}
\quad(\epsilon\in\{0,1\})
\]
and the Hopf superalgebra structure of $\HH$ is given as follows.
\begin{itemize}
\item(comultiplication) $\Delta_\HH(b) = \Delta_A^0(b) - (-1)^{|b|} (g\otimes 1_A)\Delta_A^1(b)$ for $b\in\HH$.
\item(counit) $\varepsilon_\HH = \varepsilon_A$.
\item(antipode) $S_\HH(b)= g^{|b|} S_A(b)$ for $b\in\HH$.
\end{itemize}
\end{theorem}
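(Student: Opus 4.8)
The plan is to assemble the statement directly from the results established earlier in this section, since $\HH$ is by definition the coinvariant subalgebra $\B:=A^{\coo(g,\alpha)}$ and each assertion has essentially already been proved. First I would record the underlying set: because $\HH=\B$, Proposition~\ref{prp:B=hits} gives at once $\HH=\{b\in A\mid \alpha\hits b=b\}$, so no further work is needed there.

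Next I would promote $\B$ to a genuine superspace. Since $(g,\alpha)\in\SD{A}$, Definition~\ref{def:SD} supplies the identity $\alpha\hits a\hitted\alpha=gag$ for all $a\in A$, which is exactly condition~\eqref{prp:super-criteria1:(3)} of Proposition~\ref{prp:super-criteria1}. Invoking the equivalence in that proposition, $\B$ is a super-form of $A$; in particular $\B\in\sVect$, and the parity grading is the one recorded in \eqref{eq:B-parity}, namely $\HH_{\bar\epsilon}=\{b\in\HH\mid gbg=(-1)^\epsilon b\}$. (The remaining half of the super-datum hypothesis, $g\notin Z(A)$, is what forces $\HH_{\bar1}\neq0$ via Proposition~\ref{prp:super-criteria2}, but it plays no role in the structure formulas themselves.)

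It then remains only to read off the Hopf superalgebra structure. The counit is inherited verbatim from Proposition~\ref{prp:coinv-sub}, giving $\varepsilon_\HH=\varepsilon_A$. The comultiplication and antipode are precisely the two conclusions of Proposition~\ref{prp:Delta-S}: for a homogeneous $b\in\HH$ of parity $|b|$ we have $\Delta_\HH(b)=\Delta_A^0(b)-(-1)^{|b|}(g\otimes 1_A)\Delta_A^1(b)$ and $S_\HH(b)=g^{|b|}S_A(b)$, and the general (non-homogeneous) case follows by linearity.

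Because every clause is a citation of a prior result, I do not anticipate a genuine obstacle; the theorem is a faithful transcription rather than a new argument. The only point requiring care is verifying that the super-datum hypotheses trigger the correct equivalences, specifically that the defining relation of $\SD{A}$ coincides with clause~\eqref{prp:super-criteria1:(3)} of Proposition~\ref{prp:super-criteria1}, so that Proposition~\ref{prp:Delta-S} becomes applicable. Once that identification is made, the statement is an immediate consequence of Propositions~\ref{prp:coinv-sub}, \ref{prp:super-criteria1} and \ref{prp:Delta-S} together with \eqref{eq:B-parity}.
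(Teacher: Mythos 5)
Your proposal is correct and matches the paper exactly: the paper presents Theorem~\ref{thm:super-form} as a summary of the preceding results with no separate proof, and your assembly---Proposition~\ref{prp:B=hits} for the underlying set, Definition~\ref{def:SD} feeding condition~\eqref{prp:super-criteria1:(3)} of Proposition~\ref{prp:super-criteria1} together with \eqref{eq:B-parity} for the grading, Proposition~\ref{prp:coinv-sub} for the counit, and Proposition~\ref{prp:Delta-S} for the comultiplication and antipode---is precisely the intended justification. Your side remarks (that $g\notin Z(A)$ only governs non-triviality via Proposition~\ref{prp:super-criteria2}, and that the non-homogeneous case follows by linearity) are accurate and appropriately placed.
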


\section{Classification of some classes of Hopf superalgebras} \label{sec:class}
In the rest of this paper, our base field $\kk$ is supposed to be an algebraically closed field of characteristic zero.
In this section,
we classify Hopf superalgebras of dimension up to $5$ (Sections~\ref{subsec:general}, \ref{subsec:pt-4-dim} and \ref{subsec:Mas}).
Also, we determine semisimple Hopf superalgebras of dimension $6$ (Section~\ref{subsec:Fuk})
and give some examples of non-semisimple non-pointed Hopf superalgebras of dimension $8$ (Section~\ref{subsec:CDMM}).

\subsection{Hopf superalgebras of prime dimension} \label{subsec:general}
We determine Hopf superalgebras of prime dimension.
Two-dimensional Hopf superalgebras are given as follows:
\begin{theorem} \label{prp:2-dim}
Up to isomorphism, the exterior superalgebra $\bigwedge(z)$ is the only two-dimensional Hopf superalgebra whose odd part is non-zero.
\end{theorem}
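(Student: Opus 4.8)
The plan is to classify two-dimensional Hopf superalgebras $\HH$ with $\HH_{\bar1}\neq0$ by reducing the problem to the machinery of super-data developed in Section~\ref{subsec:coinv-sub}. By Proposition~\ref{prp:summary}\eqref{prp:summary:(1)}, if $\HH$ is a non-trivial super-form of a Hopf algebra $A$, then $A=\widehat\HH$ has even dimension, and since $\dim\widehat\HH = 2\dim\HH$, we must have $\dim A = 4$. So the strategy is: the bosonization $\widehat\HH$ is a four-dimensional Hopf algebra that, by Proposition~\ref{prp:summary}\eqref{prp:summary:(2)}, is neither commutative nor cocommutative. First I would invoke the classification of four-dimensional Hopf algebras (Masuoka, \c{S}tefan), which tells us that the only four-dimensional Hopf algebra that is neither commutative nor cocommutative is Sweedler's Hopf algebra $H_4$ of \eqref{eq:Sweedler}. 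Hence $\widehat\HH\cong H_4$, and the classification reduces to computing $\SD{H_4}/\!\sim$ via the bijection at the end of Section~\ref{subsec:coinv-sub}.

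Next I would compute the relevant invariants of $H_4 = \kk\langle c,x\mid c^2=1,\,x^2=0,\,cx=-xc\rangle$. The group of group-likes is $\G(H_4)=\{1,c\}$, so the only order-two element is $g=c$. For the character side, I would determine $\G(H_4^\circ)$: since $x$ is $c$-skew primitive, an algebra map $\alpha:H_4\to\kk$ must send $x\mapsto0$ (as $x^2=0$ forces $\alpha(x)^2=0$) and $c\mapsto\pm1$; the condition $\ord(\alpha)=2$ together with $\alpha(g)=\alpha(c)=-1$ (required for an admissible datum) singles out the character $\alpha$ with $\alpha(c)=-1$, $\alpha(x)=0$. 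Thus there is at most one admissible datum $(c,\alpha)\in\AD{H_4}$. I would then check it is genuinely a super-datum by verifying the two conditions in Definition~\ref{def:SD}: that $c\notin Z(H_4)$ (immediate from $cx=-xc\neq xc$) and that $\alpha\hits a\hitted\alpha = cac$ for all $a\in H_4$, which it suffices to check on the algebra generators $c$ and $x$.

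With the unique super-datum in hand, I would read off the resulting Hopf superalgebra $\HH = H_4^{\coo(c,\alpha)}$ using Theorem~\ref{thm:super-form}. By Proposition~\ref{prp:B=hits}, $\HH=\{b\in H_4\mid \alpha\hits b = b\}$; computing $\alpha\hits c = c\,\alpha(c)=-c$ and $\alpha\hits x = x$ (using $\Delta(x)=c\otimes x + x\otimes 1$ and $\alpha(x)=0$, $\alpha(c)=-1$) shows $\HH = \kk 1 \oplus \kk x$, which is two-dimensional as required. The grading from \eqref{eq:B-parity} gives $cxc=-x$, so $x\in\HH_{\bar1}$, and the comultiplication formula of Theorem~\ref{thm:super-form} yields $\Delta_\HH(x)=1\otimes x + x\otimes 1$, i.e.\ $x$ is odd primitive with $x^2=0$. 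This identifies $\HH\cong\bigwedge(z)$, and since $\SD{H_4}$ contains only this one super-datum up to $\sim$, the classification is complete.

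I expect the main obstacle to be the input from the classification of four-dimensional Hopf algebras: everything hinges on knowing that $H_4$ is the unique four-dimensional Hopf algebra that is neither commutative nor cocommutative, so I would make sure to cite Masuoka~\cite{Mas95b} and \c{S}tefan~\cite{Ste99} precisely for this step. The remaining computations — identifying $\G(H_4)$ and the order-two characters, verifying the super-datum conditions, and extracting the structure of $\HH$ — are routine given the general theory, the only mild subtlety being the careful bookkeeping of the $\hitted$ and $\hits$ actions and the parity grading.
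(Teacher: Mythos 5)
Your proposal is correct and follows essentially the same route as the paper's proof: reduce via the bosonization to the classification of four-dimensional Hopf algebras to get $\widehat{\HH}\cong H_4$, show $\AD{H_4}=\SD{H_4}=\{(c,\alpha)\}$, and read off $\HH=H_4^{\coo(c,\alpha)}\cong\bigwedge(z)$ from Proposition~\ref{prp:B=hits} and Theorem~\ref{thm:super-form}. The only difference is that you spell out some steps the paper leaves implicit (the dimension count via Proposition~\ref{prp:summary}, checking the super-datum condition on both generators, and the explicit parity computation), all of which are sound.
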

\begin{proof}
It is known that (up to isomorphism) Sweedler's four-dimensional Hopf algebra $H_4$ (see \eqref{eq:Sweedler} for the definition)
is the only Hopf algebra of dimension $4(=2\times2)$ which is neither commutative nor cocommutative.
Since $x^2=0$, we see that $\alpha\in\G(H_4^*)$ defined by $\alpha(c)=-1,\alpha(x)=0$ is the only element in $\G(H_4^*)$ of order two,
and hence we have $\AD{H_4}=\{(c,\alpha)\}$.
Since $c\not\in Z(H_4)$ and $\alpha\hits x \hitted \alpha= -x = cxc$,
we get $\SD{H_4}=\{(c,\alpha)\}$.
Therefore, the coinvariant subalgebra $\HH:=H_4^{\coo(c,\alpha)}$ of $H_4$ is generated by $x$ and becomes a non-trivial super-form of $H_4$.
The Hopf superalgebra structure of $\HH$ is calculated as
\[
\Delta_{\HH}(x) = 1\otimes x+ x\otimes 1,\;\;
\varepsilon_{\HH}(x) = 0,\;\;
S_{\HH}(x) = -c^2x = -x
\]
by Theorem~\ref{thm:super-form}.
Thus, the assignment $z\mapsto x$ gives a Hopf superalgebra isomorphism $\bigwedge(z)\cong\HH$.
\end{proof}

We next show that any super-form of a Hopf superalgebra of an odd prime dimension is trivial.
\begin{theorem} \label{thm:main1}
All Hopf superalgebras of odd prime dimensions are purely even.
\end{theorem}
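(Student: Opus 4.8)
The plan is to argue by contradiction, using the bosonization to reduce the statement to the known classification of Hopf algebras of dimension $2p$. Suppose $\HH$ is a Hopf superalgebra of odd prime dimension $p$ with $\HH_{\bar1}\neq0$, and set $A:=\widehat{\HH}$. Since $\widehat{\HH}=\HH\biprod\kk\Z_2=\HH\otimes\kk\Z_2$ as a vector space, we have $\dim A=2\dim\HH=2p$, and by construction $\HH$ is a non-trivial super-form of $A$ in the sense of Definition~\ref{def:super-form}.

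First I would extract the structural obstruction forced by $\HH_{\bar1}\neq0$. By Lemma~\ref{prp:boson=non-triv} (equivalently Proposition~\ref{prp:summary}\eqref{prp:summary:(2)}), the existence of a non-trivial super-form makes $A$ neither commutative nor cocommutative. So the whole theorem reduces to the assertion that \emph{no} Hopf algebra of dimension $2p$, with $p$ an odd prime, can be simultaneously non-commutative and non-cocommutative.

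The key step is then to feed in the classification of Hopf algebras of dimension $2p$ due to Masuoka~\cite{Mas95} and Ng~\cite{Ng05}: up to isomorphism, the only Hopf algebras of this dimension are the group algebra $\kk\Z_{2p}$, the group algebra $\kk D$ of the dihedral group $D$ of order $2p$, and its dual $(\kk D)^*$. The first is both commutative and cocommutative, the second is cocommutative, and the third is commutative. Hence every Hopf algebra of dimension $2p$ is commutative or cocommutative, which contradicts the conclusion of the previous paragraph. Therefore $\HH_{\bar1}=0$, i.e.\ $\HH$ is purely even, as claimed. (In the language of Definition~\ref{def:SD}, the same argument reads: $\SD{A}=\emptyset$ for every $A$ of dimension $2p$, since a super-datum would manufacture by Proposition~\ref{prp:summary}\eqref{prp:summary:(2)} precisely a forbidden non-commutative non-cocommutative Hopf algebra.)

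I expect the genuine content to lie not in the deduction, which is purely formal once the bosonization dictionary of Section~\ref{sec:bos} is in place, but in correctly marshalling the external input. The main thing to verify is that the cited $2p$-classification is complete, covering both the semisimple case and the a priori possible non-semisimple one, and that each of the three listed Hopf algebras is indeed commutative or cocommutative as stated. Once that is confirmed, the proof is essentially a one-line application of Lemma~\ref{prp:boson=non-triv}.
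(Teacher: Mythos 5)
Your proof is correct and takes essentially the same route as the paper: bosonize to a Hopf algebra of dimension $2p$, invoke the Masuoka--Ng classification to conclude it is a group algebra or the dual of one (hence commutative or cocommutative), and rule out $\HH_{\bar1}\neq0$ via Lemma~\ref{prp:boson=non-triv}. Your explicit list $\kk\Z_{2p}$, $\kk D$, $(\kk D)^*$ is just a spelled-out version of the paper's statement that $\widehat{\HH}\cong\kk\Gamma$ or $(\kk\Gamma)^*$ for some finite group $\Gamma$.
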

\begin{proof}
Let $p$ be an odd prime number, and let $\HH$ be a $p$-dimensional Hopf superalgebra.
Since the bosonization $\widehat{\HH}$ of $\HH$ is a Hopf algebra of dimension $2p$,
by Masuoka~\cite{Mas95} and Ng~\cite{Ng05}, there exists a finite group $\Gamma$ such that $\widehat{\HH}$ is isomorphic to
the group algebra $\kk \Gamma$ over $\Gamma$ or its dual $(\kk \Gamma)^*$.
Hence by Lemma~\ref{prp:boson=non-triv}, the odd part $\HH_{\bar1}$ of $\HH$ must be zero.
\end{proof}

\subsection{Pointed Hopf superalgebras of dimension $4$} \label{subsec:pt-4-dim}
By \cite{AisMak14} and Theorem~\ref{thm:main1}, we now have a complete classification of Hopf superalgebras of dimension up to $5$.
As a demonstration of our method, we reproduce the classification of four-dimensional Hopf superalgebras by \cite{AisMak14} and explore their properties in more detail.
According to Masuoka~\cite{Mas95} and \c{S}tefan~\cite{Ste99}, Hopf algebras of dimension $8$ are either semisimple or pointed.
Here we address finding super-forms in the pointed case.

By Proposition~\ref{prp:summary} and the classification of non-semisimple pointed Hopf algebras of dimension $8$ given in~\cite{Ste99}, we find that all such Hopf algebras, with the exception listed below, do not admit non-trivial super-forms.
\begin{itemize}
\item $A_{C_2}=\kk\langle c,x,y \mid c^2=1, cx=-xc, cy=-yc, xy=-yx, x^2=y^2=0 \rangle$,
where $c$ is group-like and $x,y$ are $c$-skew primitive.
\item $A_{C_2\times C_2} = \kk \langle c,d,x \mid c^2=d^2=1, cd=dc, cx=-xc, dx=-xd, x^2=0\rangle$,
where $d,c$ are group-like and $x$ is $c$-skew primitive.
\end{itemize}

Super-forms of $A_{C_2}$ are given as follows.
\begin{proposition}
Up to isomorphism, the exterior superalgebra $\HH_4^{(1)}=\bigwedge(z_1,z_2)$ is the only Hopf superalgebra whose bosonization is isomorphic to $A_{C_2}$.
\end{proposition}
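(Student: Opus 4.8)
The plan is to invoke the super-datum machinery of Section~\ref{subsec:coinv-sub}. By Definition~\ref{def:SD} and the bijection following it, classifying Hopf superalgebras $\HH$ with $\HH_{\bar1}\neq0$ and $\widehat{\HH}\cong A_{C_2}$ is the same as determining $\SD{A_{C_2}}/\!\sim$ together with the associated coinvariant subalgebras. Before that, I would dispose of the purely even case: a trivial super-form $\HH$ would give $\widehat{\HH}=\HH\otimes\kk\Z_2$, which contains the central group-like $1_\HH\biprod\sigmab$ of order $2$. Since $\G(A_{C_2})=\{1,c\}$ and $c\notin Z(A_{C_2})$ (because $cx=-xc$), the Hopf algebra $A_{C_2}$ has no central group-like of order two, so no trivial super-form exists and we may restrict to $\HH_{\bar1}\neq0$.

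Next I would compute $\SD{A_{C_2}}$. As $A_{C_2}$ is pointed with coradical $\kk\langle c\rangle\cong\kk\Z_2$, its only group-like elements are $1$ and $c$, so $g=c$ is the unique order-two group-like, and $c\notin Z(A_{C_2})$. Any algebra map $\alpha:A_{C_2}\to\kk$ kills the nilpotent generators, forcing $\alpha(x)=\alpha(y)=0$, and the admissibility requirement $\alpha(c)=-1$ then pins down $\alpha$ uniquely; one checks $\ord(\alpha)=2$. To confirm $(c,\alpha)\in\SD{A_{C_2}}$ it remains to verify $\alpha\hits a\hitted\alpha=cac$ for all $a$. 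Both sides are algebra maps (the right-hand side because $c^2=1$), so it suffices to test the generators, where $\alpha\hits c\hitted\alpha=c=ccc$ and $\alpha\hits x\hitted\alpha=-x=cxc$, and likewise for $y$. Hence $\SD{A_{C_2}}=\{(c,\alpha)\}$ is a single point, so $\SD{A_{C_2}}/\!\sim$ is a singleton and there is exactly one isomorphism class of Hopf superalgebra in question.

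Finally I would identify this unique super-form using Theorem~\ref{thm:super-form}. The operator $\alpha\hits(-)$ is the algebra automorphism sending $c\mapsto -c$, $x\mapsto x$, $y\mapsto y$, so its fixed space is $\HH=A_{C_2}^{\coo(c,\alpha)}=\mathrm{span}\{1,x,y,xy\}$; conjugation by $c$ then splits this as $\HH_{\bar0}=\mathrm{span}\{1,xy\}$ and $\HH_{\bar1}=\mathrm{span}\{x,y\}$. Applying the comultiplication formula of Theorem~\ref{thm:super-form} to the generators (where the twist by $g\otimes1$ on the $\Delta_A^1$-part exactly cancels the $c$-terms coming from $c$-skew primitivity), I would obtain $\Delta_\HH(x)=1\otimes x+x\otimes1$ and likewise for $y$, so $x,y$ are odd primitives with $x^2=y^2=0$ and $xy=-yx$. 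Therefore $z_1\mapsto x,\ z_2\mapsto y$ gives an isomorphism $\bigwedge(z_1,z_2)\xrightarrow{\cong}\HH$, as claimed.

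The computations are all light because $x,y$ are nilpotent, which forces $\alpha$ and hence the entire super-datum. The only step requiring genuine care is the last one: correctly evaluating the twisted comultiplication $\Delta_\HH$ of Theorem~\ref{thm:super-form} and confirming that the $c$-skew primitive generators of $A_{C_2}$ become honest \emph{odd primitives} in $\HH$ rather than merely skew primitives. This is precisely what distinguishes the exterior superalgebra from the other four-dimensional candidates $\HH_4^{(i)}$.
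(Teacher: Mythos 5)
Your proof is correct and takes essentially the same route as the paper's: the unique order-two algebra map $\alpha$ yields $\AD{A_{C_2}}=\SD{A_{C_2}}=\{(c,\alpha)\}$, the coinvariant subalgebra is $\mathrm{span}\{1,x,y,xy\}$ with $x,y$ odd primitive by Theorem~\ref{thm:super-form}, whence the isomorphism with $\bigwedge(z_1,z_2)$. Your two refinements --- explicitly excluding trivial super-forms via the central group-like $1_{\HH}\biprod\sigmab$ (which the paper obtains implicitly from $\AD{A_{C_2}}=\SD{A_{C_2}}$, since a trivial super-form would require an admissible datum with $g\in Z(A_{C_2})$) and the sign-correct verification $\alpha\hits x\hitted\alpha=-x=cxc$ (the paper's proof misprints this as $x$) --- are sound additions rather than a different method.
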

\begin{proof}
It is easy to see that $\alpha\in\G(A_{C_2}^*)$ defined by $\alpha(c)=-1,\alpha(x)=\alpha(y)=0$ is the only algebra map $A_{C_2}\to\kk$ of order two.
Since $\alpha\hits x\hitted \alpha = x$ and $\alpha\hits y\hitted \alpha=y$,
we obtain $\AD{A_{C_2}}=\SD{A_{C_2}}=\{(c,\alpha)\}$.
By Proposition~\ref{prp:B=hits}, one sees that
the coinvariant subalgebra $A_{C_2}^{\coo(c,\alpha)}$ of $A_{C_2}$ is generated by $x,y$.
By Theorem~\ref{thm:super-form}, these $x,y$ are odd primitive.
Thus, the assignment $z_1\mapsto x,z_2\mapsto y$ gives a Hopf superalgebra isomorphism $\bigwedge(z_1,z_2)\cong A_{C_2}^{\coo(c,\alpha)}$.
\end{proof}

We will find super-forms of $A_{C_2\times C_2}$. It is easy to see
\[
\G(A_{C_2 \times C_2}) = \{ 1, c, d, cd \}
\quad \text{and} \quad
\G(A_{C_2 \times C_2}^*) = \{ \varepsilon, \alpha_1, \alpha_2, \alpha_3 := \alpha_1 \alpha_2 \},
\]
where $\alpha_1$ and $\alpha_2$ are algebra maps $A_{C_2 \times C_2} \to \kk$ determined by
$\alpha_1(c)=-1$, $\alpha_1(d)=1$, $\alpha_2(c)=1$, $\alpha_2(d)=-1$ and $\alpha_1(x) = \alpha_2(x)=0$. Hence,
\[ \AD{A_{C_2\times C_2}}=\{ (c,\alpha_1), (cd,\alpha_1), (d,\alpha_2), (cd,\alpha_2), (c,\alpha_3), (d,\alpha_3) \}. \]
Since $cd$ is central, we obtain
\[
\SD{A_{C_2\times C_2}} = \{ (c,\alpha_1), (c,\alpha_3), (d,\alpha_3) \}.
\]
The group $\HopfAuto(A_{C_2 \times C_2})$ of Hopf algebra automorphisms on $A_{C_2 \times C_2}$ is isomorphic to $\kk^{\times}$.
More precisely, we have
\[
\HopfAuto(A_{C_2 \times C_2}) = \{ \varphi_u \mid u \in \kk^{\times} \},
\]
where $\varphi_u$ is the algebra automorphism on $A_{C_2\times C_2}$ determined by $\varphi_u|_{\G(A_{C_2 \times C_2})} = \id$ and $\varphi_u(x)=ux$.
Now we are ready to prove:
\begin{proposition}
Up to isomorphism, Hopf superalgebras $\HH_4^{(2)}$, $\HH_4^{(3)}$ and $\HH_4^{(4)}$ of Example~\ref{ex:AM} are the only ones whose bosonization is isomorphic to $A_{C_2\times C_2}$.
\end{proposition}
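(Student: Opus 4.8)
The plan is to read the classification directly off the bijection
$\SD{A_{C_2\times C_2}}/\!\sim\ \longrightarrow\ \{\HH : \HH_{\bar1}\neq0,\ \widehat{\HH}\cong A_{C_2\times C_2}\}/\!\cong$
recorded just after Definition~\ref{def:SD}. Thus two things must be established: first, that $\SD{A_{C_2\times C_2}}/\!\sim$ has exactly three elements, and second, that the coinvariant subalgebra attached to each super-datum is isomorphic to the advertised $\HH_4^{(i)}$ of Example~\ref{ex:AM}. Since $\SD{A_{C_2\times C_2}}=\{(c,\alpha_1),(c,\alpha_3),(d,\alpha_3)\}$ and $\HopfAuto(A_{C_2\times C_2})=\{\varphi_u\mid u\in\kk^\times\}$ have already been determined above, the first task reduces to understanding how $\sim$ acts.

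First I would show that $\sim$ is trivial on $\SD{A_{C_2\times C_2}}$. Each $\varphi_u$ fixes every group-like element, so $\varphi_u(c)=c$ and $\varphi_u(d)=d$; and since $\alpha_1,\alpha_2,\alpha_3$ all vanish on $x$ while $\varphi_u$ merely rescales $x$ and fixes group-likes, we get $\alpha_i\circ\varphi_u=\alpha_i$ for every $i$. Hence no two of the three super-data are related by $\sim$, so they lie in three distinct classes, and there are exactly three isomorphism classes of non-purely-even Hopf superalgebras with bosonization $A_{C_2\times C_2}$.

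It remains to identify the three coinvariant subalgebras $\B=A_{C_2\times C_2}^{\coo(g,\alpha)}$. For each I would use Proposition~\ref{prp:B=hits} to compute $\B=\{b\mid \alpha\hits b=b\}$ as the span of the $\alpha\hits(-)$-fixed basis vectors, read off the parity from $\B_{\bar\epsilon}=\{b\mid gbg=(-1)^\epsilon b\}$, and obtain the comultiplication and antipode of the odd generator from Theorem~\ref{thm:super-form} (equivalently Proposition~\ref{prp:Delta-S}). I expect the following outcomes: for $(c,\alpha_1)$ one finds $\B$ with basis $\{1,d,x,dx\}$, where $d$ is group-like, $x$ is odd primitive, and $dx=-xd$, giving $\B\cong\HH_4^{(4)}$; for $(c,\alpha_3)$ one finds $\B$ with basis $\{1,cd,x,cdx\}$, where $cd$ is group-like and central and $x$ is odd primitive, giving $\B\cong\HH_4^{(2)}$; and for $(d,\alpha_3)$ one finds the \emph{same} underlying algebra with basis $\{1,cd,x,cdx\}$, but now graded by $d$, so that the twisted comultiplication of Theorem~\ref{thm:super-form} turns $x$ into an odd $cd$-skew primitive, giving $\B\cong\HH_4^{(3)}$. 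Together with the isomorphisms $(cd)^2=1$, $x^2=0$ inherited from the relations of $A_{C_2\times C_2}$, this matches each presentation in Example~\ref{ex:AM}.

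The main obstacle is the pair of super-data $(c,\alpha_3)$ and $(d,\alpha_3)$, which share the same character $\alpha_3$ and hence the same underlying coinvariant algebra spanned by $1,cd,x,cdx$. Distinguishing them requires tracking the parity grading (by $c$ versus by $d$) and, crucially, applying the twisted comultiplication $\Delta_\HH(b)=\Delta_A^0(b)-(-1)^{|b|}(g\otimes1)\Delta_A^1(b)$ carefully with $g=c$ in one case and $g=d$ in the other: this is precisely what makes $x$ primitive for $(c,\alpha_3)$ but $cd$-skew primitive for $(d,\alpha_3)$, thereby separating $\HH_4^{(2)}$ from $\HH_4^{(3)}$. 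Since $\HH_4^{(2)},\HH_4^{(3)},\HH_4^{(4)}$ are pairwise non-isomorphic by Example~\ref{ex:AM}, the three classes are genuinely distinct and the classification is complete.
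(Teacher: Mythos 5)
Your proposal is correct and follows the paper's own strategy: compute $\SD{A_{C_2\times C_2}}=\{(c,\alpha_1),(c,\alpha_3),(d,\alpha_3)\}$, note that every $\varphi_u\in\HopfAuto(A_{C_2\times C_2})$ fixes all group-likes and satisfies $\alpha_i\circ\varphi_u=\alpha_i$ (since $\alpha_i(x)=0$), so the three super-data are pairwise inequivalent, and then identify each coinvariant subalgebra via Proposition~\ref{prp:B=hits} and Theorem~\ref{thm:super-form}. One point deserves emphasis: your case-by-case matching differs from the one printed in the paper, and yours is the internally consistent one. In $A_{C_2\times C_2}^{\coo(c,\alpha_1)}$ the generators satisfy $dx=-xd$ with $d$ group-like and $x$ odd primitive, which is the presentation of $\HH_4^{(4)}$, not $\HH_4^{(2)}$ as the paper's proof asserts; likewise $A_{C_2\times C_2}^{\coo(c,\alpha_3)}$, where $cd$ is central, group-like, and $x$ is odd primitive, is $\HH_4^{(2)}$ (the paper labels it $\HH_4^{(3)}$), and $A_{C_2\times C_2}^{\coo(d,\alpha_3)}$, where the twisted comultiplication $\Delta_\HH(b)=\Delta_A^0(b)-(-1)^{|b|}(g\otimes 1_A)\Delta_A^1(b)$ with $g=d$ makes $x$ odd $cd$-skew primitive and $cd\,x=x\,cd$, is $\HH_4^{(3)}$ (the paper labels it $\HH_4^{(4)}$). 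The paper's own descriptions of the generators (primitive versus $cd$-skew primitive, and the implicit commutation relations inherited from $A_{C_2\times C_2}$) agree with your assignments and contradict its stated labels, which appear to be permuted by a typographical slip. Since the proposition only claims that $\HH_4^{(2)},\HH_4^{(3)},\HH_4^{(4)}$ exhaust the super-forms of $A_{C_2\times C_2}$ up to isomorphism, the statement is unaffected either way, and your proof is complete as written.
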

\begin{proof}
By the above discussion, three elements of $\SD{A_{C_2 \times C_2}}$ are pairwise non-equivalent.
We compute the coinvariant subalgebra for each element.
First, we consider the coinvariant subalgebra $A_{C_2\times C_2}^{\coo(c,\alpha_1)}$ of $A_{C_2\times C_2}$ with respect to $(c,\alpha_1)$.
By Proposition~\ref{prp:B=hits}, we see that
$A_{C_2\times C_2}^{\coo(c,\alpha_1)}$ is generated by $d,x$.
By Theorem~\ref{thm:super-form}, these $d$ is group-like and $x$ is odd primitive.
Thus, the assignment $g\mapsto d,z\mapsto x$ gives a Hopf superalgebra isomorphism $\HH_4^{(2)}\cong A_{C_2\times C_2}^{\coo(c,\alpha_1)}$.

Next, we consider the coinvariant subalgebra $A_{C_2\times C_2}^{\coo(c,\alpha_3)}$ of $A_{C_2\times C_2}$ with respect to $(c,\alpha_3)$.
Also, by Proposition~\ref{prp:B=hits} and Theorem~\ref{thm:super-form},
we see that $A_{C_2\times C_2}^{\coo(c,\alpha_3)}$ is generated by $cd,x$, where $cd$ is group-like and $x$ is odd primitive.
Thus, the assignment $g\mapsto cd,z\mapsto x$ gives a Hopf superalgebra isomorphism $\HH_4^{(3)}\cong A_{C_2\times C_2}^{\coo(c,\alpha_3)}$.

Finally, we consider the coinvariant subalgebra $A_{C_2\times C_2}^{\coo(d,\alpha_3)}$ of $A_{C_2\times C_2}$ with respect to $(d,\alpha_3)$.
We see that $A_{C_2\times C_2}^{\coo(d,\alpha_3)}$ is generated by $cd,x$, where $cd$ is group-like and $x$ is odd $cd$-skew primitive.
Thus, the assignment $g\mapsto cd,z\mapsto x$ gives a Hopf superalgebra isomorphism $\HH_4^{(4)}\cong A_{C_2\times C_2}^{\coo(d,\alpha_3)}$.
\end{proof}

Thus, we obtain the following result.
\begin{theorem} \label{thm:ShiShiWak2}
Let $\HH$ be a non-semisimple pointed Hopf superalgebra of dimension $4$ with $\HH_{\bar1}\neq0$.
Then $\HH$ is isomorphic to one of the Hopf superalgebras $\HH_4^{(1)},\HH_4^{(2)},\HH_4^{(3)},\HH_4^{(4)}$,
which are pairwise non-isomorphic.
Moreover, $\HH_4^{(1)}$ and $\HH_4^{(2)}$ are self-dual and the dual of $\HH_4^{(3)}$ is isomorphic to $\HH_4^{(4)}$.
\end{theorem}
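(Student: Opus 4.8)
The plan is to assemble the theorem from the two preceding propositions together with the duality computations already recorded. Since $\HH$ has dimension $4$ with $\HH_{\bar1}\neq0$, it is a non-trivial super-form of the Hopf algebra $A:=\widehat{\HH}$, which has dimension $2\cdot4=8$. By Proposition~\ref{prp:summary}\eqref{prp:summary:(3)} (equivalently Lemma~\ref{prp:pt/ss}), $A$ is non-semisimple and pointed because $\HH$ is. First I would invoke \c{S}tefan's classification of non-semisimple pointed Hopf algebras of dimension $8$, combined with the reduction carried out just above, which shows that every Hopf algebra on that list other than $A_{C_2}$ and $A_{C_2\times C_2}$ admits no non-trivial super-form. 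Hence $A\cong A_{C_2}$ or $A\cong A_{C_2\times C_2}$.

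Next I would apply the two propositions directly. If $A\cong A_{C_2}$, the proposition on super-forms of $A_{C_2}$ forces $\HH\cong\HH_4^{(1)}=\bigwedge(z_1,z_2)$; if $A\cong A_{C_2\times C_2}$, the proposition on super-forms of $A_{C_2\times C_2}$ gives $\HH\cong\HH_4^{(2)}$, $\HH_4^{(3)}$ or $\HH_4^{(4)}$. This establishes the list.

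To see that the four are pairwise non-isomorphic, I would first note that isomorphic Hopf superalgebras have isomorphic bosonizations; since $\G(A_{C_2})\cong\Z_2$ while $\G(A_{C_2\times C_2})\cong\Z_2\times\Z_2$, we have $A_{C_2}\not\cong A_{C_2\times C_2}$, so $\HH_4^{(1)}$ cannot be isomorphic to any of $\HH_4^{(2)},\HH_4^{(3)},\HH_4^{(4)}$. For the latter three I would use the bijection induced by $(g,\alpha)\mapsto A^{\coo(g,\alpha)}$ between $\SD{A_{C_2\times C_2}}/\!\sim$ and the isomorphism classes of non-purely-even super-forms (Proposition~\ref{prp:one-to-one}, restricted to super-data as in the discussion following Definition~\ref{def:SD}): the three super-data $(c,\alpha_1),(c,\alpha_3),(d,\alpha_3)$ were shown above to be pairwise non-equivalent, hence their coinvariant subalgebras are pairwise non-isomorphic.

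Finally, for the duality assertions I would cite the explicit pairings already exhibited: $\HH_4^{(1)}=\bigwedge(z_1,z_2)$ is self-dual by the exterior-superalgebra pairing, while the self-duality of $\HH_4^{(2)}$ and the isomorphism $(\HH_4^{(3)})^*\cong\HH_4^{(4)}$ follow from the non-degenerate Hopf pairings constructed in Example~\ref{ex:AM-duals}. The only genuinely substantive step is the reduction in the first paragraph---verifying, via the necessary conditions of Proposition~\ref{prp:summary} (in particular that both $\G(A)$ and $\G(A^*)$ must contain $\Z_2$ as a direct factor, and that a super-datum needs a non-central order-two group-like), that every entry of \c{S}tefan's list apart from $A_{C_2}$ and $A_{C_2\times C_2}$ admits no non-trivial super-form; since this case analysis is exactly what was performed before the two propositions, the theorem ultimately is a formal consequence of the results already established.
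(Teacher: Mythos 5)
Your proposal is correct and follows essentially the same route as the paper: the theorem is assembled from the two propositions on super-forms of $A_{C_2}$ and $A_{C_2\times C_2}$, the reduction of \c{S}tefan's list via Proposition~\ref{prp:summary}, and the duality pairings of Example~\ref{ex:AM-duals}. Your explicit argument for pairwise non-isomorphism (invariance of the bosonization plus the pairwise non-equivalence of the three super-data for $A_{C_2\times C_2}$) is exactly what the paper leaves implicit in its summary statement.
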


\subsection{Semisimple Hopf superalgebras of dimension $4$} \label{subsec:Mas}
Next, we address finding super-forms of eight-dimensional semisimple Hopf algebras.
In the following, we fix a primitive fourth root of unity $\zeta_4\in\kk$.

\subsubsection{Super-forms of $\AM_8$} \label{subsubsec:Mas}
The classification of semisimple Hopf algebras of dimension $8$ has been done by Masuoka~\cite{Mas95b}.
By his result, the following is the only {\it non-trivial} (i.e., neither commutative nor cocommutative) semisimple Hopf algebra of dimension $8$:
\[
\AM_8 := \kk\left\langle X,Y,Z \vphantom{\begin{tabular}{c}$X^2=Y^2=1,\;Z^2=\frac12(1+X+Y-XY)$,\\ $XY=YX,\;ZX=YZ,\;YZ=XZ$\end{tabular}}\;\;\right|
\left.\begin{tabular}{l}$X^2=Y^2=1,\;Z^2=\frac12(1+X+Y-XY)$,\\ $XY=YX,\;ZX=YZ,\;XZ=ZY$\end{tabular} \right\rangle,
\]
where $X$ and $Y$ are group-like and
\[
\begin{gathered}
\Delta(Z)=\frac12(Z\otimes Z + Z\otimes XZ + YZ\otimes Z - YZ\otimes XZ),\\
\varepsilon(Z) = 1,\quad S(Z) = Z.
\end{gathered}
\]
Both $\G(\AM_8)$ and $\G(\AM_8^*)$ are isomorphic to $\Z_2 \times \Z_2$ and are given by
\[
\G(\AM_8)=\{1,X,Y,XY\},\qquad
\G(\AM_8^*)=\{\varepsilon,\alpha_+,\alpha_-,\alpha_+ \alpha_-\},
\]
where $\alpha_{\s}$ for $\s \in \{ +, - \}$ is given by
$\alpha_{\s}(X) = \alpha_{\s}(Y) = -1$ and $\alpha_{\s}(Z)=\s\zeta_4$.
Hence the set of all admissible data for $\AM_8$ is given as
\[
\AD{\AM_8}=\{(X,\alpha_+),(X,\alpha_-),(Y,\alpha_+),(Y,\alpha_-)\}.
\]
We also have $\HopfAuto(\AM_8) = \{ \id, \psi, \phi, \psi \phi \}$, where
\[
\begin{gathered}
\phi(X) = X, \quad \phi(Y) = Y, \quad \phi(Z)=XYZ, \\
\psi(X) = Y, \quad \psi(Y) = X, \quad \psi(Z) = \frac{1}{2}(1+X+Y-XY)Z.
\end{gathered}
\]
See Sage and Vega~\cite[Section~4.2]{SagVeg12} for example (see also Shi~\cite{Shi23}).
By a direct computation, we now can determine a set of complete representatives of equivalence classes of admissible data for $\AM_8$ as follows:

\begin{lemma} \label{prp:sim}
We have $(X,\alpha_+)\sim(Y,\alpha_-)$, $(X,\alpha_-)\sim(Y,\alpha_+)$ and $(X,\alpha_+)\not\sim(X,\alpha_-)$.
\end{lemma}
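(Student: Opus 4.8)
The plan is to exploit the explicit presentation of $\HopfAuto(\AM_8)=\{\id,\psi,\phi,\psi\phi\}$ together with the definition of $\sim$: recall that $(g,\alpha)\sim(h,\beta)$ means there exists $\varphi\in\HopfAuto(\AM_8)$ with $\varphi(g)=h$ and $\alpha=\beta\circ\varphi$. The first condition $\varphi(g)=h$ is already very restrictive, since among the four automorphisms only $\psi$ and $\psi\phi$ interchange $X$ and $Y$, whereas $\id$ and $\phi$ fix both $X$ and $Y$. This immediately singles out the candidate witnessing automorphism in each case, and the whole lemma reduces to checking the condition $\alpha=\beta\circ\varphi$ on the single non-group-like generator $Z$ (on $X,Y$ it holds automatically, as both $\alpha_+$ and $\alpha_-$ send $X,Y\mapsto -1$).

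For the two positive assertions I would take $\varphi=\psi$ as the witness. The only genuine computation is the evaluation of $\beta\circ\psi$ at $Z$. Since $\psi(Z)=\frac12(1+X+Y-XY)Z$ and each $\alpha_{\pm}$ is an algebra map sending $X,Y\mapsto -1$, the scalar factor is $\frac12\bigl(1+(-1)+(-1)-(-1)(-1)\bigr)=-1$. Hence $\alpha_{\pm}(\psi(Z))=-\alpha_{\pm}(Z)$, which equals $\mp\zeta_4$; equivalently, precomposition by $\psi$ interchanges $\alpha_+$ and $\alpha_-$, i.e. $\alpha_-\circ\psi=\alpha_+$ and $\alpha_+\circ\psi=\alpha_-$. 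Combined with $\psi(X)=Y$, this yields both $(X,\alpha_+)\sim(Y,\alpha_-)$ and $(X,\alpha_-)\sim(Y,\alpha_+)$ at once, with $\psi$ serving as the witness in each case.

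For the non-equivalence $(X,\alpha_+)\not\sim(X,\alpha_-)$, the requirement $\varphi(X)=X$ forces $\varphi\in\{\id,\phi\}$, so it suffices to rule out both. For $\varphi=\id$ the claim is clear since $\alpha_-(Z)=-\zeta_4\neq\zeta_4=\alpha_+(Z)$. For $\varphi=\phi$ I would use $\phi(Z)=XYZ$ together with $\alpha_-(XY)=(-1)(-1)=1$, so that $\alpha_-(\phi(Z))=\alpha_-(Z)=-\zeta_4\neq\alpha_+(Z)$. Thus neither admissible automorphism sends $\alpha_+$ to $\alpha_-$, which establishes the non-equivalence.

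The argument is entirely elementary; the only point demanding care is the bookkeeping on the non-group-like generator $Z$, where one must use that the $\alpha_{\pm}$ are algebra homomorphisms in order to evaluate them on the idempotent $\frac12(1+X+Y-XY)$ appearing in $\psi(Z)$ and on the product $XYZ$ appearing in $\phi(Z)$. No real obstacle arises, since the explicit descriptions of $\HopfAuto(\AM_8)$ and of $\AD{\AM_8}$ are already in hand.
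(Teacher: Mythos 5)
Your proof is correct and takes essentially the same route as the paper, which likewise obtains the lemma by direct computation from the explicit description $\HopfAuto(\AM_8)=\{\id,\psi,\phi,\psi\phi\}$: precomposition with $\psi$ swaps $\alpha_+\leftrightarrow\alpha_-$ while $\psi(X)=Y$, and the only automorphisms fixing $X$, namely $\id$ and $\phi$, fail to carry $\alpha_+$ to $\alpha_-$. One cosmetic slip: $\tfrac12(1+X+Y-XY)$ is not an idempotent (it squares to $1$, not to itself), though this does not affect your key evaluation $\alpha_\pm\bigl(\tfrac12(1+X+Y-XY)\bigr)=-1$.
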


To determine super-data for $\AM_8$, we first calculate left/right actions of $\alpha_\s$ for each $\s\in\{+,-\}$ as follows.
\begin{equation} \label{eq:H_8hits}
\alpha_\s\hits X = -X,\quad
\alpha_\s\hits Y = -Y,\quad
\alpha_\s\hits Z = \s\zeta_4 YZ,
\end{equation}
\begin{equation} \label{eq:H_8hitted}
X\hitted\alpha_\s = -X,\quad
Y\hitted\alpha_\s = -Y,\quad
Z\hitted\alpha_\s = \s\zeta_4 YZ.
\end{equation}
\begin{lemma} \label{prp:SD(H_8)}
The set $\{ (X,\alpha_+), (X,\alpha_-) \}$ is a set of complete representatives of equivalence classes of $\SD{\AM_8}$.
\end{lemma}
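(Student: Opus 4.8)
The plan is to prove the statement in two moves: first show that all four admissible data are in fact super-data, i.e. $\SD{\AM_8}=\AD{\AM_8}$, and then read off the $\sim$-classes from Lemma~\ref{prp:sim}.

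For the first move I must verify, for each pair $(g,\alpha_\s)$ with $g\in\{X,Y\}$ and $\s\in\{+,-\}$, the two conditions of Definition~\ref{def:SD}. Non-centrality of $X$ and $Y$ is quick: from $Z^2=\tfrac12(1+X+Y-XY)$ one checks $(Z^2)^2=1$, so $Z$ is a unit, and then the relation $ZX=YZ$ gives $ZXZ^{-1}=Y\neq X$ and similarly $ZYZ^{-1}=X\neq Y$; hence $X,Y\notin Z(\AM_8)$, the center of $\AM_8$. For the identity $\alpha_\s\hits a\hitted\alpha_\s=gag$, I would use that both $\alpha_\s\hits(-)\hitted\alpha_\s$ and conjugation by a group-like element are algebra endomorphisms of $\AM_8$, so it is enough to test it on the generators $X,Y,Z$. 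On $X$ and $Y$ this is a one-line check: from the values $\alpha_\s(X)=\alpha_\s(Y)=-1$ one gets $\alpha_\s\hits X\hitted\alpha_\s=X$ and $\alpha_\s\hits Y\hitted\alpha_\s=Y$, which match $gXg=X$ and $gYg=Y$ for both $g=X$ and $g=Y$ (using $X^2=Y^2=1$ and $XY=YX$).

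The crux is the generator $Z$. Here I would compute the one-sided actions directly from the coproduct $\Delta(Z)$ and the character values $\alpha_\s(Z)=\s\zeta_4$ (this is where \eqref{eq:H_8hits} and \eqref{eq:H_8hitted} come from), and then evaluate $\alpha_\s\hits Z\hitted\alpha_\s$. On the other side, the relations $XZ=ZY$ and $ZX=YZ$ force $XZX=XYZ=YZY$, so the two possible conjugates agree; the task is to confirm that $\alpha_\s\hits Z\hitted\alpha_\s$ lands on this common value $XYZ$, the sign bookkeeping being governed by $\s^2\zeta_4^2=-1$. Granting this, the super-datum identity holds on all generators for both $g=X$ and $g=Y$, so every admissible datum is a super-datum and $\SD{\AM_8}=\AD{\AM_8}$.

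For the second move, Lemma~\ref{prp:sim} pairs the four super-data as $(X,\alpha_+)\sim(Y,\alpha_-)$ and $(X,\alpha_-)\sim(Y,\alpha_+)$, so there are at most two $\sim$-classes, represented by $(X,\alpha_+)$ and $(X,\alpha_-)$; since the same lemma gives $(X,\alpha_+)\not\sim(X,\alpha_-)$, there are exactly two, and $\{(X,\alpha_+),(X,\alpha_-)\}$ is a complete, irredundant list of representatives. I expect the only real obstacle to be the computation on $Z$: the non-(co)commutativity of $Z$ makes the evaluation of $\alpha_\s\hits Z\hitted\alpha_\s$ and its reconciliation with $XZX=YZY$ the one place where care is needed, while every other check is immediate on group-like generators.
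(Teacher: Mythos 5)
Your proposal is correct and takes essentially the same route as the paper: the paper's argument likewise consists of computing the one-sided actions \eqref{eq:H_8hits}--\eqref{eq:H_8hitted}, checking $\alpha_\s\hits a\hitted\alpha_\s=gag$ on the algebra generators (where indeed $\alpha_\s\hits Z\hitted\alpha_\s=XYZ=XZX=YZY$, so that $\SD{\AM_8}=\AD{\AM_8}$), and then invoking Lemma~\ref{prp:sim} to extract the two classes. One point your computation surfaces: carrying out the $Z$-calculation from the printed coproduct gives $Z\hitted\alpha_\s=\s\zeta_4\,XZ$ rather than $\s\zeta_4\,YZ$ as displayed in \eqref{eq:H_8hitted} --- with the printed value one would obtain $\alpha_\s\hits Z\hitted\alpha_\s=Z\neq XYZ=gZg$ and no admissible datum would be a super-datum, so your value is the consistent one and the paper's display contains a typo.
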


Therefore, up to isomorphism, there exists exactly two non-trivial super-forms of $\AM_8$.
By Example~\ref{ex:A_4},
we see that such Hopf superalgebras are exhausted by $\A_4(\pm\zeta_4)$ given in Example~\ref{ex:AM}.
Thus, we get the following result.
\begin{theorem} \label{thm:4ss}
Let $\HH$ be a semisimple Hopf superalgebra of dimension $4$ with $\HH_{\bar1}\neq0$.
Then $\HH$ is isomorphic to one of the Hopf superalgebras $\A_4(\zeta_4),\A_4(-\zeta_4)$.
Moreover, these Hopf superalgebras are non-isomorphic.
\end{theorem}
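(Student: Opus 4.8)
The plan is to reduce the problem to the uniqueness of the eight-dimensional non-trivial semisimple Hopf algebra $\AM_8$ recalled in Section~\ref{subsubsec:Mas}, and then to count its non-trivial super-forms through the super-datum correspondence. First I would pass to the bosonization: given such an $\HH$, its bosonization $\widehat{\HH}=\HH\biprod\kk\Z_2$ has dimension $\dim\HH\cdot 2=8$. By Lemma~\ref{prp:pt/ss}, semisimplicity of $\HH$ is equivalent to that of $\widehat{\HH}$, so $\widehat{\HH}$ is an eight-dimensional semisimple Hopf algebra; and since $\HH_{\bar1}\neq0$, Lemma~\ref{prp:boson=non-triv} shows that $\widehat{\HH}$ is neither commutative nor cocommutative. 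By Masuoka's classification, the only such Hopf algebra is $\AM_8$, whence $\widehat{\HH}\cong\AM_8$. In other words, $\HH$ is a non-trivial super-form of $\AM_8$.

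Next I would invoke the bijection between isomorphism classes of non-trivial super-forms and the orbit set $\SD{\AM_8}/\!\sim$ furnished by Proposition~\ref{prp:one-to-one} together with Definition~\ref{def:SD}. By Lemma~\ref{prp:SD(H_8)}, the set $\{(X,\alpha_+),(X,\alpha_-)\}$ is a complete set of representatives of $\SD{\AM_8}/\!\sim$, so up to isomorphism $\AM_8$ admits exactly two non-trivial super-forms. It then remains to match these two abstract orbits with concrete Hopf superalgebras.

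For this, I would appeal to Example~\ref{ex:A_4}, which exhibits $\A_4(\zeta_4)$ and $\A_4(-\zeta_4)$ as non-trivial super-forms of $\AM_8^*$; since $\AM_8^*\cong\AM_8$ (it is self-dual), both are super-forms of $\AM_8$. By Example~\ref{ex:AM} they are non-isomorphic, because the eigenvalues of their antipodes differ. Being two pairwise non-isomorphic non-trivial super-forms of $\AM_8$, they must exhaust the two orbits found above. Therefore $\HH\cong\A_4(\zeta_4)$ or $\HH\cong\A_4(-\zeta_4)$, and these are non-isomorphic, as claimed.

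The real content, and the step I expect to be most delicate, is Lemma~\ref{prp:SD(H_8)}: that $\SD{\AM_8}/\!\sim$ has exactly two elements. This rests on three checks. One must verify the super-datum condition $\alpha_\s\hits a\hitted\alpha_\s=gag$ for all $a\in\AM_8$, for which it suffices to test the generators $X,Y,Z$ using the hit-formulas \eqref{eq:H_8hits} and \eqref{eq:H_8hitted}; one must confirm $X,Y\notin Z(\AM_8)$ so that the resulting super-forms are genuinely non-trivial via Proposition~\ref{prp:super-criteria2}; and one must cut the four admissible data in $\AD{\AM_8}$ down to two equivalence classes using the action of $\HopfAuto(\AM_8)=\{\id,\psi,\phi,\psi\phi\}$, as recorded in Lemma~\ref{prp:sim}. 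Once non-isomorphism of $\A_4(\zeta_4)$ and $\A_4(-\zeta_4)$ is in hand, matching the two orbits to this pair is immediate; the bookkeeping of the $\HopfAuto(\AM_8)$-action on $\AD{\AM_8}$ is where care is genuinely required.
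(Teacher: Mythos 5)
Your proposal is correct and takes essentially the same route as the paper: bosonize to obtain an eight-dimensional semisimple Hopf algebra that is neither commutative nor cocommutative (hence $\AM_8$ by Masuoka's classification), reduce $\AD{\AM_8}$ to the two orbits $\{(X,\alpha_+),(X,\alpha_-)\}$ of $\SD{\AM_8}/\!\sim$ via the $\HopfAuto(\AM_8)$-action (Lemmas~\ref{prp:sim} and \ref{prp:SD(H_8)}), and identify the two resulting super-forms with $\A_4(\pm\zeta_4)$ through Example~\ref{ex:A_4}, with non-isomorphism coming from the antipode eigenvalues as in Example~\ref{ex:AM}. You have also correctly located the computational heart of the argument in Lemma~\ref{prp:SD(H_8)}, which is exactly where the paper's work lies.
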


\subsubsection{Coinvariant subalgebras} \label{subsubsec:Mas2}
In the following, we give an explicit isomorphism between
$\A_4(-\zeta_4)$ and the coinvariant subalgebras $\B:=\AM_8^{\coo(X,\alpha_+)}$ of $\AM_8$.
To accomplish this, we first determine generators and relations of the algebra $\B$ concretely.
Set
\begin{eqnarray*}
g &:=& XY, \\
v &:=& \frac{1-\zeta_4}{4}(Z+\zeta_4XZ+\zeta_4YZ+XYZ), \\
w &:=& \frac{\sqrt{2}}{4}(Z-\zeta_4XZ+\zeta_4YZ-XYZ).
\end{eqnarray*}
By \eqref{eq:H_8hits} and Proposition~\ref{prp:B=hits}, we see that $\B$ has the set $\{ 1, g, v, w \}$ as a basis. Moreover, $v$ and $w$ are homogeneous with $|v|=0$ and $|w|=1$, respectively.
By direct computation, we have
\[
\begin{gathered}
g^2=1,\quad gv=v,\quad gw=-w,\\
wv=0=vw,\quad v^2=\frac12(1+g),\quad w^2=\frac12(1-g).
\end{gathered}
\]
In particular, we have $v^2 - w^2 = g$.
Since $\B$ has dimension 4, it is now easy to see that $\B$ is generated by $v$ and $w$ with the relations $v^2 + w^2 = 1$ and $v w = w v = 0$.

By Theorem~\ref{thm:super-form}, one sees that the comultiplication $\Delta_\B$ of $\B$ is given as
\[
\Delta_\B(v) = v\otimes v - \zeta_4 w\otimes w,\quad \Delta_\B(w) = v\otimes w + w\otimes v.
\]
These results imply the following.
\begin{proposition}
There is an isomorphism of Hopf superalgebras determined by
\[ \A_4(-\zeta_4) \longrightarrow \B = \AM_8^{\coo(X,\alpha_+)};
\quad
x\longmapsto v,\quad z\longmapsto w.
\]
\end{proposition}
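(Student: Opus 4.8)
The plan is to verify directly that the algebra map determined by $x\mapsto v$, $z\mapsto w$ is a bijective morphism of Hopf superalgebras; essentially every computational ingredient has already been assembled in the preceding discussion, so the argument is largely a matter of recognizing the structure of $\A_4(-\zeta_4)$ among the facts already recorded. First I would recall from Example~\ref{ex:AM} that, as an algebra, $\A_4(-\zeta_4)=\kk\langle x,z\mid x^2+z^2=1,\,xz=zx=0\rangle$. Hence, to obtain a well-defined algebra homomorphism $\Phi\colon\A_4(-\zeta_4)\to\B$ with $\Phi(x)=v$ and $\Phi(z)=w$, it suffices to check that $v,w$ satisfy the same relations in $\B$. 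This is exactly what the multiplication table computed above provides: from $v^2=\tfrac12(1+g)$ and $w^2=\tfrac12(1-g)$ we get $v^2+w^2=1$, and we also have $vw=wv=0$.

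Next I would establish bijectivity together with the coalgebra compatibilities. Since $\{1,g,v,w\}$ is a basis of $\B$ (by \eqref{eq:H_8hits} and Proposition~\ref{prp:B=hits}) and the image of $\Phi$ contains $v$, $w$, the unit $v^2+w^2=1$, and $g=v^2-w^2$, the map $\Phi$ is surjective; as both Hopf superalgebras are $4$-dimensional, $\Phi$ is then bijective. The $\Z_2$-grading is preserved because $|v|=0=|x|$ and $|w|=1=|z|$. For the comultiplication, the formulas $\Delta_\B(v)=v\otimes v-\zeta_4\,w\otimes w$ and $\Delta_\B(w)=v\otimes w+w\otimes v$ established above match $\Delta(x)=x\otimes x-\zeta_4\,z\otimes z$ and $\Delta(z)=x\otimes z+z\otimes x$ under $\Phi$. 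For the counit, Theorem~\ref{thm:super-form} tells us that the counit of $\B$ is the restriction of $\varepsilon_{\AM_8}$, so a one-line evaluation on the defining expressions for $v,w$, using that $X,Y$ are group-like, $\varepsilon_{\AM_8}(Z)=1$, and $\zeta_4^2=-1$, yields $\varepsilon_\B(v)=1=\varepsilon(x)$ and $\varepsilon_\B(w)=0=\varepsilon(z)$. Thus $\Phi$ is a bijective morphism of bialgebras in $\sVect$.

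Finally, since a bialgebra morphism between Hopf algebras in a symmetric monoidal category automatically commutes with the (unique) antipodes, $\Phi$ is an isomorphism of Hopf superalgebras, and no separate antipode verification is strictly needed. The one genuinely delicate point, and the step I would treat most carefully, is that the target is $\A_4(-\zeta_4)$ and not its companion $\A_4(\zeta_4)$: these are non-isomorphic (Example~\ref{ex:AM}), and the correct sign is pinned down by the coefficient $-\zeta_4$ in $\Delta_\B(v)$, equivalently by the antipode eigenvalue $S_\B(w)=\zeta_4\,w$ obtained from $S_\B(b)=X^{|b|}S_{\AM_8}(b)$ together with $S_{\AM_8}(Z)=Z$. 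I would double-check this sign explicitly to confirm that the super-datum $(X,\alpha_+)$ really produces $\A_4(-\zeta_4)$ rather than $\A_4(\zeta_4)$, consistently with Example~\ref{ex:A_4} and Lemma~\ref{prp:SD(H_8)}.
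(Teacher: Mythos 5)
Your proposal is correct and follows essentially the same route as the paper, which establishes the proposition by matching the computed presentation of $\B$ (generators $v,w$ with $v^2+w^2=1$, $vw=wv=0$, parities $|v|=0$, $|w|=1$) and the formulas $\Delta_\B(v)=v\otimes v-\zeta_4\,w\otimes w$, $\Delta_\B(w)=v\otimes w+w\otimes v$ against the defining data of $\A_4(-\zeta_4)$. You merely make explicit some details the paper leaves implicit — surjectivity plus equal dimensions for bijectivity, the counit evaluation, the automatic compatibility with antipodes, and the sign check distinguishing $\A_4(-\zeta_4)$ from $\A_4(\zeta_4)$ — all of which are sound.
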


An isomorphism $\A_4(\zeta_4) \cong \AM_8^{\coo(X,\alpha_-)}$ is obtained by the same argument but with $\zeta_4$ replaced with $-\zeta_4$.

\subsubsection{Duals} \label{subsubsec:Mas3}
In this section, we determine the dual of the Hopf superalgebra
\[
\HH := \A_4(-\zeta_4) = \kk\langle x,z \mid x^2+z^2=1, xz=zx=0 \rangle.
\]
Recall that $|x|=0$, $|z|=1$, $\Delta_\HH(x)=x\otimes x - \zeta_4 z\otimes z$ and $\Delta_\HH(z)=x\otimes z + z\otimes x$.

Let $1^*,x^*,(x^2)^*,z^*$ denote the dual bases of $1,x,x^2,z$, respectively.
Recall that the evaluation map $\langle\;,\; \rangle : \HH^*\times \HH\to \kk$ gives a Hopf superalgebra structure of $\HH^*$,
see Section~\ref{subsec:duals}.
The multiplication table of $\HH^*$ is given as follows.
\[
\begin{array}{c||c|c|c|c|}
    & 1^* & x^* & (x^2)^* & z^* \\
\hline\hline
1^* & 1^*+(x^2)^* & 0 & -(x^2)^* & 0 \\ \hline
x^* & 0 & x^* & 0 & z^* \\ \hline
(x^2)^* & -(x^2)^* & 0 & 2(x^2)^* & 0 \\ \hline
z^* & 0 & z^* & 0 & -\zeta_4 x^* \\ \hline
\end{array}
\]
Also, the comultiplication is
\begin{eqnarray*}
\Delta_{\HH^*}(1^*) &=& 1^*\otimes 1^* + z^*\otimes z^*,\\
\Delta_{\HH^*}(x^*) &=& 1^* \otimes x^* + x^*\otimes 1^* + x^*\otimes (x^2)^* + (x^2)^*\otimes x^*,\\
\Delta_{\HH^*}((x^2)^*) &=& 1^* \otimes (x^2)^* + (x^2)^*\otimes 1^* + (x^2)^*\otimes (x^2)^* + z^*\otimes z^*,\\
\Delta_{\HH^*}(z^*) &=& 1^*\otimes z^* + z^*\otimes 1^*
\end{eqnarray*}
and the counit is
\[
\varepsilon_{\HH^*}(1^*)=1,\quad \varepsilon_{\HH^*}(x^*)=0,\quad \varepsilon_{\HH^*}((x^2)^*)=0,\quad \varepsilon_{\HH^*}(z^*)=0.
\]

Then a direct computation shows that the algebra map
\[
\HH\longrightarrow \HH^*;
\quad
x\longmapsto 1^*,\;\;
z\longmapsto \zeta_4 z^*
\]
is an isomorphism of Hopf superalgebras.
In other words, the pairing $\langle\;,\;\rangle :\HH\times \HH\to\kk$ defined by
\[
\langle x,x \rangle = \langle x,z\rangle = \langle z,x\rangle =0,
\quad
\langle z,z\rangle = \zeta_4
\]
is a non-degenerate Hopf pairing.
Thus, $\HH$ is self-dual.
In this way, we get the following result.
\begin{theorem} \label{thm:A_4-self-dual}
Hopf superalgebras $\A_4(\zeta_4)$ and $\A_4(-\zeta_4)$ are self-dual.
\end{theorem}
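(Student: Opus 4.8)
The plan is to prove both self-dualities simultaneously by a conceptual argument based on the bosonization and the classification already in hand, rather than by inverting the multiplication tables. First I would note that $\A_4(\pm\zeta_4)^*$ is again a four-dimensional Hopf superalgebra whose odd part is the linear dual of the one-dimensional odd part of $\A_4(\pm\zeta_4)$, hence nonzero. By Lemma~\ref{prp:dual} its bosonization satisfies $\widehat{\A_4(\pm\zeta_4)^*}\cong(\widehat{\A_4(\pm\zeta_4)})^*$, and by Example~\ref{ex:A_4} this is $(\AM_8^*)^*\cong\AM_8\cong\AM_8^*$, which is semisimple; so by Lemma~\ref{prp:pt/ss} the dual $\A_4(\pm\zeta_4)^*$ is a semisimple super-form of $\AM_8^*$ with nonzero odd part. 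Theorem~\ref{thm:4ss} then forces $\A_4(\pm\zeta_4)^*$ to be isomorphic to $\A_4(\zeta_4)$ or to $\A_4(-\zeta_4)$.

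It remains to decide which, and for this I would compare antipodes. The identity $\langle S_{\HH^*}(f),h\rangle=\langle f,S_\HH(h)\rangle$ from Section~\ref{subsec:duals} shows that the antipode of a dual Hopf superalgebra is the transpose $(S_\HH)^*$ of the original antipode, so the two have the same eigenvalues. Since $S$ acts on the one-dimensional odd part of $\A_4(\pm\zeta_4)$ by the scalar $\mp\zeta_4$ (Example~\ref{ex:AM}), the same scalar governs the odd part of $\A_4(\pm\zeta_4)^*$. Because the scalars $+\zeta_4$ and $-\zeta_4$ are exactly what distinguish $\A_4(-\zeta_4)$ from $\A_4(\zeta_4)$ (again Example~\ref{ex:AM}), the dual of each is forced to be isomorphic to itself, which is the assertion.

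A more computational alternative is the route carried out just above: one determines the algebra and coalgebra structure of $\HH^*$ for $\HH=\A_4(-\zeta_4)$ from the Hopf pairing of Section~\ref{subsec:duals}, exhibits a grading-preserving isomorphism $\HH\to\HH^*$ of the form $x\mapsto1^*$, $z\mapsto\lambda z^*$, and then passes to $\A_4(\zeta_4)$ by the substitution $\zeta_4\leftrightarrow-\zeta_4$. I expect the main obstacle in this route to be the sign-and-scalar bookkeeping: one must use the plain Hopf pairing rather than the Koszul-signed variant $\HH^{\bar*}$, and the scalar $\lambda$ is pinned down by the relation $x^2+z^2=1$ together with $(z^*)^2=-\zeta_4x^*$ to satisfy $\lambda^2=\zeta_4$, so it must be taken to be a square root of $\zeta_4$. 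The conceptual argument is preferable precisely because it never touches this delicate scalar.
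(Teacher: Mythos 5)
Your proposal is correct, and your main argument takes a genuinely different route from the paper. The paper's own proof (Section~\ref{subsubsec:Mas3}) is precisely the computational alternative you sketch at the end: it tabulates the algebra and coalgebra structures of $\HH^*$ for $\HH=\A_4(-\zeta_4)$ and then exhibits an isomorphism of the form $x\mapsto 1^*$, $z\mapsto\lambda z^*$. Your primary argument instead runs a dichotomy-plus-invariant scheme: Lemma~\ref{prp:dual} and Example~\ref{ex:A_4} identify $\widehat{\HH^*}$ with $(\AM_8^*)^*\cong\AM_8$, Lemma~\ref{prp:pt/ss} transfers semisimplicity back to $\HH^*$, Theorem~\ref{thm:4ss} leaves only the two candidates $\A_4(\pm\zeta_4)$, and the eigenvalue of the antipode on the one-dimensional odd part selects the correct one, since $S_{\HH^*}=(S_\HH)^*$ respects the grading (so the scalar $\mp\zeta_4$ is inherited by the dual) and since this scalar separates the two candidates, as the paper itself notes in Example~\ref{ex:AM}. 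All the results you invoke precede the theorem in the paper, so there is no circularity, and each step is sound. What each approach buys: yours is shorter, completely scalar-free, and reusable whenever the possible duals have already been classified and are separated by an antipode invariant; the paper's computation additionally produces an explicit self-duality pairing, which serves as the template for the duality statements about $\mathcal{K}_8(\zeta;\epsilon,\eta)$ in Section~\ref{subsec:CDMM}.

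Your caution about the ``delicate scalar'' is moreover vindicated: it flags an actual slip in the paper's proof. The paper asserts that $x\mapsto 1^*$, $z\mapsto\zeta_4 z^*$ is an isomorphism, equivalently that $\langle z,z\rangle=\zeta_4$ defines a Hopf pairing on $\HH\times\HH$. But the pairing axiom applied to $\langle z\cdot z,x\rangle$, using $z^2=1-x^2$ and $\Delta_\HH(x)=x\otimes x-\zeta_4\,z\otimes z$, forces $1=-\zeta_4\langle z,z\rangle^2$, i.e.\ $\langle z,z\rangle^2=\zeta_4$; this fails for $\langle z,z\rangle=\zeta_4$ because $\zeta_4^2=-1$. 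The correct scalar is a square root $\zeta_8$ of $\zeta_4$, exactly as your relation $\lambda^2=\zeta_4$ predicts, and with that correction the paper's map (and hence the theorem) stands. This is a point in favor of your conceptual route, which never touches the scalar at all.
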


\subsection{Semisimple Hopf superalgebras of dimension $6$} \label{subsec:Fuk}
In this section, we first construct a semisimple Hopf superalgebra of dimension $6$ (whose odd part is not zero) using Theorem~\ref{thm:AEG}.
Then next we show that it is (up to isomorphism) the only semisimple Hopf superalgebra of dimension $6$.

\subsubsection{Existence}
By Masuoka~\cite[Theorem~1.10]{Mas95b},
any semisimple Hopf algebra of dimension $6$ is isomorphic to either  $\kk \Z_6$, $(\kk{\sym})^*$ or $\kk\sym$,
where
\[
\sym=\langle s_1,s_2 \mid s_1^2=s_2^2=e,\; s_1s_2s_1=s_2s_1s_2\rangle
\]
is the symmetric group of degree three ($e\in\sym$ is the identity element).
Note that $\kk\sym$ is the only non-commutative Hopf algebra among them and has a non-central group-like element
\[
c:=s_1\quad\in\G(\kk\sym)
\]
of order two.

Therefore, by Theorem~\ref{thm:AEG}, we make the Hopf algebra $\HH:=\kk\sym$ into a Hopf superalgebra such that $\HH_{\bar1}\neq0$.
Moreover, $\HH$ is semisimple as a Hopf superalgebra, since $\kk\sym$ is semisimple as a Hopf algebra.
In the following, we shall write down the Hopf superalgebra structure of $\HH$.
For simplicity, we set
\[
\begin{gathered}
x:=\frac12(s_1s_2+s_2s_1),\quad
y:=\frac12(s_2+s_1s_2s_1),\\
z:=\frac12(s_1s_2-s_2s_1),\quad
w:=\frac12(s_2-s_1s_2s_1).
\end{gathered}
\]
Since $cxc=x,cyc=y,czc=-z$ and $cwc=-w$,
we have
\[
\HH_{\bar0}=\kk e\oplus \kk c\oplus \kk x \oplus \kk y
\quad\text{and}\quad
\HH_{\bar1}=\kk z\oplus \kk w.
\]
The multiplication table of $\HH$ is given as follows.
\[
\begin{array}{c||c|c|c|c|c|c|}
  & e & c & x & y & z & w \\
\hline\hline
e & e & c & x & y & z & w \\ \hline
c & c & e & y & x & w & z \\ \hline
x & x & y & \frac12(e+x) & \frac12(c+y) & -\frac12z & -\frac12w\\ \hline
y & y & x & \frac12(c+y) & \frac12(e+x) & -\frac12w & -\frac12z\\ \hline
z & z & -w & -\frac12z & \frac12w & \frac12(x-e) & \frac12(c-y) \\ \hline
w & w & -z & -\frac12w & \frac12z & \frac12(y-c) & \frac12(e-x) \\ \hline
\end{array}
\]

Since $\Delta_{\kk\sym}(x)=x\otimes x + z\otimes z$,
we get $\Delta_{\HH,\bar0}(x) = x\otimes x$ and $\Delta_{\HH,\bar1}(x) = z\otimes z$ by definition.
Thus, the comultiplication $\Delta_\HH$ of $x$ in $\HH$ is given as
\[
\Delta_{\HH}(x) = \Delta_{\HH,\bar0}(x)-(-1)^{|x|}(c\otimes1)\Delta_{\HH,\bar1}(x) = x\otimes x -w\otimes z.
\]
The counit and the antipode are given as $\varepsilon_\HH(x) = \varepsilon_{\kk\sym}(x) = 1$
and $S_{\HH}(x) = (-1)^{|x|}S_{\kk\sym}(x) = x$, respectively.
In this way, we get the following result.
\begin{proposition} \label{prp:H_6^5}
Let $\A_6$ be the $6$-dimensional superalgebra generated by homogeneous elements $x,y,z$ and $w$ subject to
\[
\begin{gathered}
|x|=|y| =0,\quad |z|=|w|=1,\\
x^2=\frac12(1+x)=y^2,\quad z^2=\frac12(x-1)=-w^2,\\
xy=yx,\quad xz=-\frac12z=zx,\quad xw=-\frac12w=wx,\\
yz=-\frac12w=-zy,\quad yw=-\frac12z=-wy,\quad zw=xy-y=-wz.
\end{gathered}
\]
Then the following comultiplication $\Delta$, counit $\varepsilon$ and antipode $S$ make $\A_6$ into a Hopf superalgebra.
\[
\begin{gathered}
\Delta(x)=x\otimes x-w\otimes z,\quad
\Delta(y)=y\otimes y-z\otimes w,\\
\Delta(z)=z\otimes x+y\otimes z,\quad
\Delta(w)=w\otimes y+x\otimes w,\\
\varepsilon(x)=\varepsilon(y)=1,\quad \varepsilon(z)=\varepsilon(w)=0,\\
S(x)=x,\quad S(y)=y,\quad S(z)=-w,\quad S(w)=z.
\end{gathered}
\]
Moreover, $\A_6$ is semisimple as a Hopf superalgebra.
\end{proposition}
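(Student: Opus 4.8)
The plan is to avoid verifying the bialgebra and antipode axioms for $\A_6$ by hand, and instead to recognize $\A_6$ as a presentation of the Hopf superalgebra $\HH=\kk\sym$ that was just built from the pair $(\kk\sym,c)$ through Theorem~\ref{thm:AEG}. Once $\A_6$ is identified with $\HH$ as a superalgebra equipped with comultiplication, counit and antipode, the fact that it is a Hopf superalgebra is immediate from Theorem~\ref{thm:AEG}, and only the isomorphism and the semisimplicity remain to be checked.

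First I would define the algebra map $\phi\colon\A_6\to\kk\sym$ on generators by $x\mapsto\frac12(s_1s_2+s_2s_1)$, $y\mapsto\frac12(s_2+s_1s_2s_1)$, $z\mapsto\frac12(s_1s_2-s_2s_1)$ and $w\mapsto\frac12(s_2-s_1s_2s_1)$. Well-definedness of $\phi$ is exactly the statement that the displayed defining relations of $\A_6$ hold among these four elements of $\kk\sym$, which is precisely the multiplication table computed just before the statement. For surjectivity I note that $\phi(y+w)=s_2$ and $\phi\bigl((x+z)(y+w)\bigr)=s_1s_2^2=s_1$, so the image contains the generators of $\kk\sym$. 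To finish the algebra isomorphism I must bound $\dim_\kk\A_6\le 6$: using only the quadratic relations one checks that the span of $\{1,x,y,z,w,zw\}$ is closed under multiplication (for instance $xy=zw+y$, and $c=s_1$ corresponds to $2zw+y$), hence equals $\A_6$; combined with surjectivity onto the $6$-dimensional $\kk\sym$ this forces $\phi$ to be bijective.

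Next I would match the coalgebra and antipode data. Writing $\Delta_{\kk\sym}$ of each of $x,y,z,w$ as the sum of its $\HH_{\bar0}$- and $\HH_{\bar1}$-components and feeding this into \eqref{eq:AEG-Delta-S} reproduces the formulas for $\Delta$, $\varepsilon$ and $S$ recorded in the statement; the case of $x$ is the computation carried out just above, and the cases of $y,z,w$ are identical one-line manipulations. Since $\phi$ also sends homogeneous generators to homogeneous elements of matching degree, it intertwines all five structure maps, so $\A_6\cong\HH$ as Hopf superalgebras and, by Theorem~\ref{thm:AEG}, $\A_6$ is a Hopf superalgebra.

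Finally, semisimplicity is the soft part and was already noted before the statement: since $\phi$ identifies the underlying algebra of $\A_6$ with $\kk\sym$, which is a semisimple algebra over a field of characteristic zero, and since $2$ is invertible, a short averaging argument over the grading involution shows that every graded submodule of an $\A_6$-supermodule admits a graded complement, so $\lsmod{\A_6}$ is semisimple; equivalently this follows from Lemma~\ref{prp:pt/ss}. I expect the only real labor to be the dimension bound $\dim_\kk\A_6\le 6$ — confirming that the listed relations close the multiplication without collapsing the algebra — while everything else is either the displayed multiplication table or a direct appeal to Theorem~\ref{thm:AEG}.
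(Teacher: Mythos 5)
Your proposal is correct and follows essentially the same route as the paper: the paper proves the proposition by constructing $\HH$ from $(\kk\sym,c)$ with $c=s_1$ via Theorem~\ref{thm:AEG}, defining the same four elements $x,y,z,w$, computing the multiplication table, and deriving $\Delta$, $\varepsilon$, $S$ from \eqref{eq:AEG-Delta-S} exactly as you do. Your only addition is to make explicit what the paper leaves implicit --- well-definedness and surjectivity of the map $\A_6\to\kk\sym$ and the dimension bound $\dim_\kk\A_6\le 6$ via closure of the span of $\{1,x,y,z,w,zw\}$ --- which is a legitimate (and correct) tightening rather than a different method.
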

The set of all group-like elements is given as $\G(\A_6)=\{1,xy+zw\}\;(\cong\Z_2)$.

\subsubsection{Uniqueness}
In this section, we show the following uniqueness result.
\begin{theorem} \label{thm:H_6^5}
Let $\HH$ be a semisimple Hopf superalgebra of dimension $6$ with $\HH_{\bar1}\neq0$.
Then $\HH$ is isomorphic to the Hopf superalgebra $\A_6$ given in Proposition~\ref{prp:H_6^5}.
Moreover, $\A_6$ is self-dual.
\end{theorem}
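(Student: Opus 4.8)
The plan is to pass to the bosonization and invoke Fukuda's classification. Given a semisimple Hopf superalgebra $\HH$ of dimension $6$ with $\HH_{\bar1}\neq0$, set $A:=\widehat{\HH}$. Then $\dim A = 2\cdot 6 = 12$, by Lemma~\ref{prp:pt/ss} $A$ is semisimple, and by Lemma~\ref{prp:boson=non-triv} (equivalently Proposition~\ref{prp:summary}) it is neither commutative nor cocommutative. First I would record that the Hopf superalgebra $\A_6$ of Proposition~\ref{prp:H_6^5}, being obtained from $(\kk\sym,c)$ via Theorem~\ref{thm:AEG}, has a bosonization $B:=\widehat{\A_6}$ that is a semisimple, noncommutative, noncocommutative Hopf algebra of dimension $12$; as an algebra $B\cong \kk\sym\ratimes\kk\Z_2$ with $\kk\Z_2$ acting by conjugation by $c=s_1$. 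By Fukuda's classification~\cite{Fuk97}, there is, up to isomorphism, a unique semisimple Hopf algebra of dimension $12$ that is neither commutative nor cocommutative; hence $A\cong B\cong\widehat{\A_6}$.

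Next I reduce the uniqueness statement to an orbit count. By Proposition~\ref{prp:one-to-one} together with Definition~\ref{def:SD}, the isomorphism classes of Hopf superalgebras $\HH'$ with $\HH'_{\bar1}\neq0$ and $\widehat{\HH'}\cong B$ are in bijection with $\SD{B}/\!\sim$, so it suffices to show $\SD{B}$ consists of a single equivalence class under $\HopfAuto(B)$. Concretely I would compute $\G(B)$ and $\G(B^*)$ (both are $\cong\Z_2\times\Z_2$ by Proposition~\ref{prp:summary}, since $\G(\A_6)\cong\Z_2$), locate the noncentral order-two group-likes, and list the order-two characters; then imposing $\alpha(g)=-1$ together with the twist condition $\alpha\hits a\hitted\alpha = gag$ of Definition~\ref{def:SD} cuts $\AD{B}$ down to $\SD{B}$, and the $\HopfAuto(B)$-action is shown to identify all remaining super-data. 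Since $\A_6$ itself arises as $B^{\coo(g,\alpha)}$ for a datum in $\SD{B}$ (this is the content of the existence part), the unique orbit must be this one, and therefore $\HH\cong\A_6$.

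For self-duality I would avoid a direct pairing computation and instead deduce it from the uniqueness just obtained. The dual Hopf superalgebra $\A_6^*$ has dimension $6$, and its odd part $(\A_6^*)_{\bar1}=\big((\A_6)_{\bar1}\big)^*$ is nonzero; moreover it is semisimple, since $\widehat{\A_6}$ is semisimple, hence so is its ordinary dual $(\widehat{\A_6})^*\cong\widehat{\A_6^*}$ by Lemma~\ref{prp:dual}, whence $\A_6^*$ is semisimple by Lemma~\ref{prp:pt/ss}. Thus $\A_6^*$ is again a semisimple Hopf superalgebra of dimension $6$ with nonzero odd part, and uniqueness forces $\A_6^*\cong\A_6$.

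The main obstacle is the combination of the input from~\cite{Fuk97} with the orbit computation for $\SD{B}$: one must extract both the uniqueness of the nontrivial $12$-dimensional semisimple Hopf algebra $B$ and enough of its structure (its group-likes and their centrality, the characters in $\G(B^*)$, and the group $\HopfAuto(B)$) to verify that the twist condition and the automorphism action collapse $\SD{B}$ to a single class. I expect this to be the only step requiring genuine calculation; once $B\cong\widehat{\A_6}$ is identified and $\lvert\SD{B}/\!\sim\rvert=1$ is established, both the uniqueness and the self-duality follow formally.
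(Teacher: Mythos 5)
Your overall strategy --- pass to the bosonization, invoke Fukuda, reduce uniqueness to an orbit count on super-data, and deduce self-duality of $\A_6$ formally from uniqueness --- is exactly the paper's route, and your self-duality argument (that $\A_6^*$ is again a semisimple Hopf superalgebra of dimension $6$ with $(\A_6^*)_{\bar1}\neq0$, via Lemmas~\ref{prp:dual} and \ref{prp:pt/ss}) is correct. However, there is a genuine error at the pivotal step. You assert that by Fukuda's classification there is, up to isomorphism, a \emph{unique} semisimple Hopf algebra of dimension $12$ that is neither commutative nor cocommutative, and you conclude $A\cong B\cong\widehat{\A_6}$ from this alone. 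That is not Fukuda's theorem: his list is $\kk\Gamma$, $(\kk\Gamma)^*$, $A_+$ and $A_-$, and \emph{both} $A_+$ and $A_-$ are neither commutative nor cocommutative (and both are self-dual); they are distinguished by $\G(A_+)\cong\Z_2\times\Z_2$ versus $\G(A_-)\cong\Z_4$. So noncommutativity and noncocommutativity only give $A\in\{A_+,A_-\}$, and as written your identification of $A$ with $\widehat{\A_6}$ does not follow.

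The repair is already implicit in your own text but never deployed: by part (4) of Proposition~\ref{prp:summary}, a Hopf algebra admitting a non-trivial super-form must have its group of group-likes of the form $\Gamma\times\Z_2$, and $\Z_4$ admits no such direct factor; hence $A\not\cong A_-$ and likewise $\widehat{\A_6}\not\cong A_-$, forcing $A\cong A_+\cong\widehat{\A_6}$. This is precisely how the paper excludes $A_-$ before doing anything else. Beyond this, note that your second step is only a plan: the collapse of $\SD{A_+}$ to a single $\HopfAuto(A_+)$-orbit is the real computational content of the paper's proof --- one must check, for instance, that $(\z,\alpha_2)$ lies in $\AD{A_+}$ but \emph{fails} the twist condition $\alpha\hits a\hitted\alpha=gag$ (the conjugate $\z s_2^*\z=(s_1s_2s_1)^*$ differs from $\alpha_2\hits s_2^*\hitted\alpha_2=s_2^*$), that $(\z,\alpha_3)$ satisfies it, and that the automorphism determined by $\z\mapsto\z\,\sgn$ identifies the remaining data --- and your proposal defers all of it. With the Fukuda statement corrected and that computation supplied, your argument, including the deduction of self-duality, goes through and coincides with the paper's.
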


By Fukuda~\cite{Fuk97}, any semisimple Hopf algebra of dimension $12$ is isomorphic to either
$\kk \Gamma$, $(\kk{\Gamma})^*$, $A_+$ or $A_-$, where $\Gamma$ is a group of order $12$.
It is also shown that $\G(A_+)\cong \Z_2\times \Z_2$, $\G(A_-)\cong \Z_4$ and $A_{\pm}$ are self-dual, see \cite[Remark~4.2]{Fuk97}.
Among these Hopf algebras, only $A_+$ has the possibility of having super-forms, see Proposition~\ref{prp:summary}.
\medskip

Let us recall the definition of the Hopf algebra $A:=A_+$.
In the following, we set $c:=s_1\in\sym$ and denote the dual base of $\sigma\in\sym$ by $\sigma^*$.
As an algebra, $A$ is generated by $(\kk{\sym})^*$ and $\z$ such that
\[
\z^2 = 1,\;\;
\z f = f^c \z\;\;
\text{for all } f\in(\kk{\sym})^*,\;\;
\text{and }(\kk\sym)^*\text{ is a subalgebra of }A,
\]
where $f^c\in \kk^{\sym}$ defined by $f^c(\sigma) := f(c\sigma c)$ for $\sigma\in\sym$.
The Hopf algebra structure of $A$ is given as
\[
\Delta_A(\sigma^*) = \sum_{\tau\tau'=\sigma} \tau^*\otimes \tau'^*,
\quad
\varepsilon_A(\sigma^*) = \delta_{\sigma,e},
\quad
S_A(\sigma^*) = (\sigma^{-1})^*
\]
and $\Delta_A(\z)=\z\otimes \z, \varepsilon_A(\z)=1, S_A(\z)=\z$ (i.e., $\z$ is group-like),
where $\delta_{\sigma,e}$ is the Kronecker symbol and $e\in\sym$ is the identity element (as before).

Let $\sgn\in(\kk{\sym})^*$ be the signature map of $\sym$.
Then we have
\[
\G(A) = \{1,\z,\sgn,\z\,\sgn\},\qquad
\G(A^*) = \{\varepsilon,\alpha_1,\alpha_2,\alpha_3:=\alpha_1\alpha_2\},
\]
where $\alpha_1,\alpha_2$ is given by $\alpha_1(\z)=1$, $\alpha_1(f)=f(c)$, $\alpha_2(\z)=-1$ and $\alpha_2(f)=f(e)$ for $f\in(\kk\sym)^*$.

The set of all admissible data for $A$ is given as follows.
\[
\AD{A} = \{ (\z,\alpha_2), (\z,\alpha_3), (\sgn,\alpha_1), (\sgn,\alpha_3), (\z\,\sgn,\alpha_1), (\z\,\sgn,\alpha_2) \}.
\]
\begin{lemma}
We have $(\z,\alpha_2)\sim(\z\,\sgn,\alpha_2)$ and $(\z,\alpha_3)\sim(\z\,\sgn,\alpha_1)$.
\end{lemma}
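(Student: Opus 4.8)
The plan is to realize both equivalences simultaneously with a single explicit Hopf algebra automorphism of $A=A_+$. The natural candidate is the map $\varphi$ that fixes the Hopf subalgebra $(\kk\sym)^*$ pointwise and sends $\z\mapsto\z\,\sgn$. First I would record two structural facts. The element $\sgn\in(\kk\sym)^*$ is central in $A$: since $\sgn(c\sigma c)=\sgn(c)^2\sgn(\sigma)=\sgn(\sigma)$, we have $\sgn^c=\sgn$, so the defining relation $\z f=f^c\z$ specializes to $\z\,\sgn=\sgn\,\z$. Moreover $\sgn^2=1_A$, because the product in $(\kk\sym)^*$ is pointwise and $\sgn(\sigma)^2=1$ for every $\sigma\in\sym$.

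With these facts in hand, verifying that $\varphi$ is a well-defined Hopf automorphism is routine. Since $A$ is generated as an algebra by $(\kk\sym)^*$ and $\z$ subject to $\z^2=1$ and $\z f=f^c\z$, it suffices to check that the assigned images respect these relations: $(\z\,\sgn)^2=\z\,\sgn\,\z\,\sgn=\z^2\sgn^2=1_A$ by centrality, and $\z\,\sgn\,f=\z f\,\sgn=f^c\z\,\sgn=\varphi(f^c)\,\varphi(\z)$. Compatibility with the comultiplication and counit is immediate, since $\z\,\sgn$ is group-like (a product of group-likes) and $\varphi$ restricts to the identity on $(\kk\sym)^*$, where $\Delta_A$ already lands inside $(\kk\sym)^*\otimes(\kk\sym)^*$. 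Finally $\varphi$ is an involution, as $\varphi(\z\,\sgn)=\z\,\sgn\,\sgn=\z$, hence bijective, so $\varphi\in\HopfAuto(A)$.

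It then remains to verify that this one automorphism witnesses both equivalences, namely that $\varphi(\z)=\z\,\sgn$ together with $\alpha_2=\alpha_2\circ\varphi$ and $\alpha_3=\alpha_1\circ\varphi$. The first condition holds by construction. For the character identities I would evaluate the composites on generators. On $(\kk\sym)^*$ both $\alpha_2\circ\varphi$ and $\alpha_2$ are evaluation at $e$, while on $\z$ we get $\alpha_2(\z\,\sgn)=\alpha_2(\z)\,\sgn(e)=-1=\alpha_2(\z)$, so $\alpha_2\circ\varphi=\alpha_2$. Likewise $\alpha_1\circ\varphi$ agrees with $\alpha_1$ on $(\kk\sym)^*$ (evaluation at $c$) and sends $\z\mapsto\alpha_1(\z\,\sgn)=\sgn(c)=-1$. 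The one genuinely non-formal point, and the main thing to get right, is to identify $\alpha_3=\alpha_1\alpha_2$ (the convolution product in $A^*$) explicitly: using $\Delta_A(\sigma^*)=\sum_{\tau\tau'=\sigma}\tau^*\otimes\tau'^*$ one finds $\alpha_3(f)=f(c)$ for $f\in(\kk\sym)^*$, together with $\alpha_3(\z)=\alpha_1(\z)\alpha_2(\z)=-1$. Comparing the two computations gives $\alpha_1\circ\varphi=\alpha_3$, and both asserted relations $(\z,\alpha_2)\sim(\z\,\sgn,\alpha_2)$ and $(\z,\alpha_3)\sim(\z\,\sgn,\alpha_1)$ follow at once.
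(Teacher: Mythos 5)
Your proof is correct and uses exactly the same automorphism as the paper: the map $\varphi$ with $\varphi|_{(\kk\sym)^*}=\id$ and $\varphi(\z)=\z\,\sgn$, which the paper states is a well-defined Hopf algebra automorphism and from which it says the claim "easily follows." You have simply written out the routine verifications (well-definedness via centrality of $\sgn$ and $\sgn^2=1_A$, and the character identities $\alpha_2\circ\varphi=\alpha_2$ and $\alpha_1\circ\varphi=\alpha_3$) that the paper leaves implicit, and all of them check out.
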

\begin{proof}
An algebra map $\varphi:A\to A$ determined by
\[
\varphi|_{(\kk{\sym})^*}=\id \quad\text{and}\quad \varphi(\z)=\z\,\sgn
\]
is a (well-defined) Hopf algebra automorphism on $A$.
Using this $\varphi$, the claim easily follows.
\end{proof}

By this lemma, we conclude that the quotient set $\SD{A}/\!\sim$ is a singleton set.
\begin{lemma} \label{prp:SD(A_+)}
We have $\SD{A}/\!\sim\; = \{ [(\z,\alpha_3)] \}$.
\end{lemma}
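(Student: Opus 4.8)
The plan is to determine exactly which of the six members of $\AD{A}$ are super-data and then to collapse them under $\sim$ using the previous lemma. By Definition~\ref{def:SD}, a pair $(g,\alpha)\in\AD{A}$ is a super-datum precisely when (i) $g\notin Z(A)$ and (ii) $\alpha\hits a\hitted\alpha=gag$ for all $a\in A$. The key simplification I would exploit is that both $a\mapsto\alpha\hits a\hitted\alpha$ (a composite of the two hit actions, which are algebra maps) and $a\mapsto gag$ (conjugation by an invertible element) are algebra endomorphisms of $A$. Since $A$ is generated as an algebra by $\z$ together with $(\kk\sym)^*$, and the latter is spanned by the idempotents $\sigma^*$ $(\sigma\in\sym)$, it suffices to test both conditions on the generators $\z$ and $\sigma^*$.

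First I would record the conjugation side. Each $\alpha_i$ restricts on $(\kk\sym)^*$ to evaluation $\mathrm{ev}_{a_0}$ at a group element $a_0\in\sym$, namely $a_0=c$ for $\alpha_1,\alpha_3$ and $a_0=e$ for $\alpha_2$. Using that $(\kk\sym)^*$ is a Hopf subalgebra with $\Delta(\sigma^*)=\sum_{\tau\tau'=\sigma}\tau^*\otimes\tau'^*$, one computes $\alpha\hits\sigma^*=(\sigma a_0^{-1})^*$ and $\sigma^*\hitted\alpha=(a_0^{-1}\sigma)^*$, whence $\alpha\hits\sigma^*\hitted\alpha=(a_0^{-1}\sigma a_0^{-1})^*$. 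On the other side, the relation $\z f=f^c\z$ with $f^c(\rho)=f(c\rho c)$ gives $\z\sigma^*\z=(c\sigma c)^*$, and since $\sgn\sigma^*\sgn=\sigma^*$ (pointwise, as $\sgn(\sigma)^2=1$) we get $g\sigma^*g=(c\sigma c)^*$ for both $g=\z$ and $g=\z\,\sgn$, while $\sgn\sigma^*\sgn=\sigma^*$. On the generator $\z$ all the relevant maps fix $\z$: one has $\alpha\hits\z\hitted\alpha=\alpha(\z)^2\z=\z$ and $g\z g=\z$ for every $g\in\G(A)$, so $\z$ imposes no further constraint.

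From these computations both conditions read off at once. For centrality: $\sgn\sigma^*\sgn=\sigma^*$ and $\sgn\z\sgn=\z$ show $\sgn\in Z(A)$, eliminating $(\sgn,\alpha_1)$ and $(\sgn,\alpha_3)$; whereas $g\sigma^*g=(c\sigma c)^*\neq\sigma^*$ (as $c=s_1$ is not central in $\sym$) shows $\z,\z\,\sgn\notin Z(A)$, so the remaining four candidates pass (i). For condition (ii) on $(\kk\sym)^*$, comparing $(a_0^{-1}\sigma a_0^{-1})^*$ with $(c\sigma c)^*$ shows it holds if and only if $a_0=c$ (recall $c^{-1}=c$), i.e. exactly when $\alpha\in\{\alpha_1,\alpha_3\}$; the case $a_0=e$ fails. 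Intersecting with $\AD{A}$ and the surviving $g\in\{\z,\z\,\sgn\}$ leaves $\SD{A}=\{(\z,\alpha_3),(\z\,\sgn,\alpha_1)\}$. Finally, the previous lemma records $(\z,\alpha_3)\sim(\z\,\sgn,\alpha_1)$, so these two super-data lie in one class and $\SD{A}/\!\sim\,=\{[(\z,\alpha_3)]\}$.

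I expect the main obstacle to be the bookkeeping in the second paragraph: correctly identifying $\alpha\hits\sigma^*$ and $\sigma^*\hitted\alpha$ as right/left translations of the index by the evaluation point $a_0$, and matching the resulting double translation against the $c$-conjugation $\sigma^*\mapsto(c\sigma c)^*$ produced by $\z f=f^c\z$. Once that matching is in place, all four verdicts (two centrality exclusions and two winding exclusions) fall out of the single comparison $a_0=c$ versus $a_0=e$.
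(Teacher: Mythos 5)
Your proof is correct and takes essentially the same route as the paper's: you verify the two super-datum conditions on the algebra generators $\z$ and $\sigma^*$ via the same computations $\alpha\hits\sigma^*=(\sigma a_0^{-1})^*$, $\sigma^*\hitted\alpha=(a_0^{-1}\sigma)^*$ and $\z\sigma^*\z=(c\sigma c)^*$, eliminating the central element $\sgn$ and the data with $\alpha_2$ (evaluation at $e$), and then collapsing $(\z,\alpha_3)\sim(\z\,\sgn,\alpha_1)$ by the preceding lemma. The only difference is organizational: you test all six admissible data uniformly through the evaluation point $a_0$, whereas the paper checks the representatives $(\z,\alpha_2)$ and $(\z,\alpha_3)$ and lets the $\sim$-equivalences dispose of the $\z\,\sgn$ cases.
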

\begin{proof}
Since the signature map $\sgn$ is central, we have $(\sgn,\alpha_1), (\sgn,\alpha_3) \notin \SD{A}$ by definition.
Let us consider the case of $(\z,\alpha_2)$.
For all $\sigma\in\sym$, we get
\[
\alpha_2\hits \z = -\z,\quad
\alpha_2 \hits \sigma^* = \sum_{\tau\tau'=\sigma} \tau^*\; \delta_{\tau',e} = \sigma^*, \quad
\sigma^* \hitted \alpha_2 = \sigma^*.
\]
This shows that the coinvariant subalgebra $A^{\coo(\z,\alpha_2)}$ of $A$ coincides with $(\kk{\sym})^*$.
However, for $s_2\in\sym$, we have
\[
\z s_2^* \z = (s_2^*)^c\; \z^2 = (s_2^*)^c = (c s_2 c)^* = (s_1s_2s_1)^*
\neq s_2^* = \alpha_2\hits s_2^* \hitted \alpha_2,
\]
and hence we conclude that $(\z,\alpha_2) \notin \SD{A}$.

Next, we consider the case of $(\z,\alpha_3)$.
For all $\sigma\in\sym$, we get
\begin{equation} \label{eq:H_6hits}
\alpha_3\hits \z = -\z,\quad
\alpha_3\hits \sigma^* = \sum_{\tau\tau'=\sigma} \tau^* (\tau'^*(c)) = (\sigma c)^*, \quad
\sigma^* \hitted \alpha_3 = (c\sigma)^*.
\end{equation}
Thus, for all $\sigma\in\sym$, we have
\[
\z \sigma^* \z = (\sigma^*)^c\;\z^2 = (c\sigma c)^* = \alpha_3\hits \sigma^*\hitted \alpha_3.
\]
The proof is done.
\end{proof}

By Proposition~\ref{prp:H_6^5} and Lemma~\ref{prp:SD(A_+)}, the proof of Theorem~\ref{thm:H_6^5} is done.

\subsubsection{Coinvariant subalgebras}
Let $\B$ be the coinvariant subalgebra $A^{\coo(\z,\alpha_3)}$ of $A$.
Since $\B$ is a semisimple Hopf superalgebra of dimension $6$,
it automatically follows that $\B$ is isomorphic to $\A_6$.
In this section, we construct a Hopf superalgebra isomorphism from $\A_6$ to $\B$ explicitly.

Set
\begin{eqnarray*}
x_1 &:=& e^*+s_1^*, \\
x_2 &:=& (e^*-s_1^*)\z, \\
x_3 &:=& s_2^*+(s_1s_2)^*+(s_2s_1)^*+(s_1s_2s_1)^*, \\
x_4 &:=& (s_2^*-(s_1s_2)^*-(s_2s_1)^*+(s_1s_2s_1)^*)\z, \\
w_1 &:=& s_2^*-(s_1s_2)^*+(s_2s_1)^*-(s_1s_2s_1)^*, \\
w_2 &:=& (s_2^*+(s_1s_2)^*-(s_2s_1)^*-(s_1s_2s_1)^*)\z.
\end{eqnarray*}
Then by \eqref{eq:H_6hits}, one sees that
$\{x_1,x_2,x_3,x_4\}$ forms a basis of $\B_{\bar0}$ and $\{w_1,w_2\}$ forms a basis of $\B_{\bar1}$.

Let $M(1|1)$ be the set of all $2\times2$ square matrices with entries in $\kk$.
With the usual matrix multiplication and the following $\Z_2$-grading, this $M(1|1)$ becomes a superalgebra.
\[
M(1|1)_{\bar0} := \{ \left(\begin{array}{c|c}a & 0 \\ \hline 0 & d \end{array}\right) \mid a,d\in\kk \},
\quad
M(1|1)_{\bar1} := \{ \left(\begin{array}{c|c}0 & b \\ \hline c & 0 \end{array}\right) \mid b,c\in\kk \}.
\]
Then one sees that
the linear map $\B \to \kk^2\oplus M(1|1)$ defined by
\[
\begin{gathered}
x_1 \longmapsto (1,1,O),\quad
x_2 \longmapsto (1,-1,O),\\
x_3 \longmapsto (0,0,\left(\begin{array}{c|c}1 & 0 \\ \hline 0 & 1 \end{array}\right)),\quad
x_4 \longmapsto (0,0,\left(\begin{array}{c|c}1 & 0 \\ \hline 0 & -1 \end{array}\right)),\\
w_1 \longmapsto (0,0,\left(\begin{array}{c|c}0 & 1 \\ \hline 1 & 0 \end{array}\right)),\quad
w_2 \longmapsto (0,0,\left(\begin{array}{c|c}0 & -1 \\ \hline 1 & 0 \end{array}\right)),
\end{gathered}
\]
is a superalgebra isomorphism.
Moreover, one can show that the following holds.
\begin{proposition}
The assignment
\[
x\longmapsto x_1-\frac12x_3,\quad
y\longmapsto x_2-\frac12x_4,\quad
z\longmapsto w_1,\quad
w\longmapsto w_2
\]
gives a Hopf superalgebra isomorphism $\A_6 \cong \B$.
\end{proposition}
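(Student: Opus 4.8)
The plan is to check three things: that the assignment respects products, that it is bijective, and that it respects coproduct and counit. Since a bijective bialgebra map between Hopf superalgebras is automatically an isomorphism of Hopf superalgebras (the antipode is determined by the bialgebra structure), these checks suffice. Note that the existence of \emph{some} isomorphism $\A_6 \cong \B$ is already guaranteed by Proposition~\ref{prp:H_6^5} together with the uniqueness result Theorem~\ref{thm:H_6^5}, because $\B = A^{\coo(\z,\alpha_3)}$ is a six-dimensional semisimple Hopf superalgebra with non-zero odd part; the content of the proposition is therefore only that this one explicit map is such an isomorphism, so the verification cannot be bypassed by the abstract classification.

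First I would verify that the images of $x,y,z,w$ satisfy the defining relations of $\A_6$ from Proposition~\ref{prp:H_6^5}. The efficient route is to push everything through the superalgebra isomorphism $\B \cong \kk^2 \oplus M(1|1)$ recorded just above the statement: there $x \mapsto x_1 - \frac12 x_3$, $y \mapsto x_2 - \frac12 x_4$, $z \mapsto w_1$ and $w \mapsto w_2$ become concrete elements of $\kk^2 \oplus M(1|1)$, and each relation (such as $x^2 = \frac12(1+x)$, $z^2 = -w^2$, $xy = yx$, $xz = zx = -\frac12 z$, and $zw = -wz$) collapses to a pair of scalar identities in $\kk^2$ together with a single $2 \times 2$ matrix identity. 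This computation simultaneously confirms that the map preserves the $\Z_2$-grading, since $x_1, \dots, x_4$ are even and $w_1, w_2$ are odd.

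For bijectivity I would note that $1$ maps to $1_\B = x_1 + x_3$ and that the images of $x, y, z, w$ together with their products recover all of $x_1, x_2, x_3, x_4, w_1, w_2$; hence the map is surjective, and since $\dim \A_6 = 6 = \dim \B$ it is an isomorphism of superalgebras.

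It remains to check compatibility with the coalgebra structure, and since the map is already an algebra homomorphism it is enough to test $\Delta$ and $\varepsilon$ on the four generators. The counit is immediate from $\varepsilon_\B = \varepsilon_A$ and $\varepsilon_A(\sigma^*) = \delta_{\sigma, e}$. For the coproduct I would invoke Theorem~\ref{thm:super-form}, which gives $\Delta_\B(b) = \Delta_A^0(b) - (-1)^{|b|}(\z \otimes 1_A)\Delta_A^1(b)$, and combine it with the convolution coproduct $\Delta_A(\sigma^*) = \sum_{\tau\tau' = \sigma} \tau^* \otimes \tau'^*$ and $\Delta_A(\z) = \z \otimes \z$ to evaluate $\Delta_\B$ on $x_1 - \frac12 x_3$, $x_2 - \frac12 x_4$, $w_1$ and $w_2$, matching the outcomes against $\Delta(x) = x \otimes x - w \otimes z$, $\Delta(z) = z \otimes x + y \otimes z$, and so on from Proposition~\ref{prp:H_6^5}. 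I expect this final step to be the main obstacle: it forces one to expand the twisted, parity-signed coproduct on each basis vector, rewrite the resulting tensors in $A \otimes A$ in terms of the basis $\{x_1, \dots, w_2\}$ of $\B$, and keep the $\z$-twist and the sign $(-1)^{|b|}$ under control. Once these four coproduct identities are in hand, the map is a bijective bialgebra homomorphism and hence the desired Hopf superalgebra isomorphism.
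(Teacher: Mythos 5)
Your plan — verify the defining relations of $\A_6$ through the superalgebra model $\B\cong\kk^2\oplus M(1|1)$, get bijectivity from surjectivity plus $\dim\A_6=6=\dim\B$, check the coproduct on generators via Theorem~\ref{thm:super-form}, and let the antipode come for free — is exactly the direct verification the paper leaves to the reader (it offers no written proof beyond ``one can show''), and your remark that the abstract classification already guarantees \emph{some} isomorphism, without settling this particular map, matches the paper's own comment before the proposition. But as written the proposal proves nothing: all three checks are deferred, and the entire content of the proposition \emph{is} those checks.

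The serious problem is that the deferred first check fails, so the plan cannot be completed for the map as displayed. In $(\kk\sym)^*\subset A$ the elements $\sigma^*$ are orthogonal idempotents, so $w_1^2=s_2^*+(s_1s_2)^*+(s_2s_1)^*+(s_1s_2s_1)^*=x_3$; on the other hand $1_\B=x_1+x_3$, so the proposed assignment sends $z^2=\frac12(x-1)$ to $\frac12\bigl(x_1-\frac12x_3\bigr)-\frac12\bigl(x_1+x_3\bigr)=-\frac34x_3\neq x_3$. Your own matrix route exposes this instantly: the image of $z$ is $\left(\begin{smallmatrix}0&1\\1&0\end{smallmatrix}\right)$, whose square is the identity of the $M(1|1)$ block, while $z^2=\frac12(x-1)$ must land on $-\frac34$ times that identity. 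An independent failure: since $\z w_1=w_1^c\z=-w_1\z$, one computes $x_2w_1=0$ and $x_4w_1=-w_2$, hence $\bigl(x_2-\frac12x_4\bigr)w_1=\frac12w_2$, contradicting $yz=-\frac12w\mapsto-\frac12w_2$. So the displayed assignment is not even an algebra map; any isomorphism with these even images must rescale the odd generators by scalars of square $\pm\frac34$ (for instance $z\mapsto\frac{\sqrt3}{2}w_2$, $w\mapsto-\frac{\sqrt3}{2}w_1$ repairs all six multiplicative relations, the coproduct check then settling the remaining sign and scalar ambiguities). Had you carried out step one instead of asserting that the relations ``collapse to scalar identities'' that hold, you would have caught this — either as an error in your map or as a typo in the stated formula needing correction before any proof can exist.
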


\subsection{Non-semisimple non-pointed Hopf superalgebras of dimension $8$} \label{subsec:CDMM}
In this section, we fix a primitive fourth root of unity $\zeta_4\in\kk$.
By C\u{a}linescu, D\u{a}sc\u{a}lescu, Masuoka and Menini~\cite{CalDasMasMen04}, and Garc\'ia and Vay~\cite{GarVay10},
we know that there exist precisely two isomorphism classes of
non-semisimple non-pointed self-dual Hopf algebras $H_{16}(\pm\zeta_4)$ of dimension $16$.
In the following, we determine (up to isomorphism) all super-forms of $H_{16}(\pm\zeta_4)$.

We concentrate on $H_{16} := H_{16}(\zeta_4)$ for simplicity.
First of all, we recall the definition of $H_{16}$.
As an algebra, $H_{16}$ is generated by the subalgebra $H_8$ (for the definition and notations, see Section~\ref{subsec:Mas}) and $T$ subject to
\[
T^2=0,\quad
TX=-XT,\quad
TY=-YT,\quad
TZ=\zeta_4XZT.
\]
As a Hopf algebra $H_8\subset H_{16}$ and $T$ is a $X$-skew primitive element of $H_{16}$.

Both $\G(H_{16})$ and $\G(H_{16}^*)$ are isomorphic to $\Z_2 \times \Z_2$ and are given by
\[
\G(H_{16})=\{1,X,Y,XY\}\quad\text{and}\quad
\G(H_{16}^*)=\{\varepsilon,\alpha_+,\alpha_-,\alpha_+ \alpha_-\},
\]
where $\alpha_{\s}$ for $\s \in \{+,-\}$ is given by
$\alpha_{\s}(X) = \alpha_{\s}(Y) = -1$, $\alpha_{\s}(Z)=\s\zeta_4$ and $\alpha_{\s}(T)=0$.
Hence, we get
\[
\AD{H_{16}}=\SD{H_{16}}=\{(X,\alpha_+),(X,\alpha_-),(Y,\alpha_+),(Y,\alpha_-)\}.
\]

In the contrast to the case of $H_8$ (Lemma~\ref{prp:sim}),
the number of isomorphism classes of super-data for $H_{16}$ is $4$.
\begin{lemma}
The set $\{(X,\alpha_+),(X,\alpha_-),(Y,\alpha_+),(Y,\alpha_-)\}$ is a complete set of representatives of $\SD{H_{16}}/\!\sim$.
\end{lemma}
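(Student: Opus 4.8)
The set $\SD{H_{16}}$ has already been identified with the four listed pairs, so proving that these four form a complete set of representatives is equivalent to proving that they are pairwise inequivalent under $\sim$. Recall that $(g,\alpha)\sim(h,\beta)$ means there is $\varphi\in\HopfAuto(H_{16})$ with $\varphi(g)=h$ and $\alpha=\beta\circ\varphi$. The plan is therefore to describe how $\HopfAuto(H_{16})$ acts on the relevant part of $\G(H_{16})\times\G(H_{16}^*)$ and to show that this action has exactly four orbits.

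First I would record the cheap structural constraints. Since $\G(H_{16})\cong\Z_2\times\Z_2$ with $X,Y$ non-central and $XY$ central, every $\varphi$ fixes the unique non-trivial central group-like $XY$ and hence permutes $\{X,Y\}$. For the characters, $\alpha_\s\circ\varphi$ is again an algebra map, i.e.\ an element of $\G(H_{16}^*)$ taking the value $-1$ on both $X$ and $Y$, so $\alpha_\s\circ\varphi\in\{\alpha_+,\alpha_-\}$, the two being separated only by the value on $Z$. The useful point is that $\alpha_\s$ vanishes on $J:=H_8T$ (indeed $\alpha_\s(hT)=\alpha_\s(h)\alpha_\s(T)=0$), which is a two-sided ideal with $J^2=0$ and $H_{16}/J\cong H_8$ semisimple, hence the Jacobson radical. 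Thus $\alpha_\s$ factors through the quotient Hopf algebra $H_{16}/J\cong H_8$; as $J$ is characteristic, $\varphi$ descends to $\bar\varphi\in\HopfAuto(H_8)=\{\id,\psi,\phi,\psi\phi\}$ and $\alpha_\s\circ\varphi=\bar\alpha_\s\circ\bar\varphi$. From the $H_8$-computation already used for Lemma~\ref{prp:sim}, the non-swapping automorphisms $\id,\phi$ fix each $\alpha_\s$, while the swapping ones $\psi,\psi\phi$ interchange $\alpha_+$ and $\alpha_-$. So everything hinges on whether $\varphi$ can swap $X$ and $Y$.

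The crux, and the main obstacle, is to prove that no $\varphi$ swaps $X$ and $Y$ — this is exactly the point of departure from $H_8$, where the swap $\psi$ does exist. I would argue by a skew-primitive count: for a group-like $g$ set $P_g=\{p\in H_{16}\mid \Delta_{H_{16}}(p)=g\otimes p+p\otimes 1\}$, which is automorphism-covariant in the sense that $\varphi(P_g)=P_{\varphi(g)}$. Since $T\in P_X\setminus\kk(X-1)$, we have $\dim P_X\geq2$. On the other hand I claim $P_Y=\kk(Y-1)$: any $p\in P_Y$ projects to a $Y$-skew primitive of the cosemisimple Hopf algebra $H_8\cong H_{16}/J$, which must be trivial, so $p\in\kk(Y-1)+J$ with $J=H_8T$; a direct check of both legs of $\Delta_{H_{16}}$ on the $T$-part then forces the group-like attached to any skew primitive lying in $H_8T$ to be $X$ rather than $Y$ (this is precisely where $\Delta_{H_{16}}(T)=X\otimes T+T\otimes1$ enters), whence $P_Y\cap J=0$ and $\dim P_Y=1$. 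Consequently $\dim P_X\neq\dim P_Y$, so no automorphism sends $X$ to $Y$; equivalently, $\varphi(X)=Y$ would make $\varphi(T)$ a non-trivial element of $P_Y$ with $\varphi(T)^2=\varphi(T^2)=0$, impossible since $(Y-1)^2=2(1-Y)\neq0$. This is the only step I expect to require genuine computation.

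Finally I would assemble the conclusion. Since every $\varphi$ fixes $X$ and $Y$, it cannot relate an $X$-datum to a $Y$-datum, and $\bar\varphi\in\{\id,\phi\}$, so $\alpha_\s\circ\varphi=\alpha_\s$ for both signs. Hence an equivalence $(X,\alpha_\s)\sim(X,\alpha_{\s'})$, or the analogous one for $Y$, forces $\alpha_\s=\alpha_{\s'}$, i.e.\ $\s=\s'$. Therefore the four super-data lie in four distinct $\sim$-classes and constitute a complete set of representatives of $\SD{H_{16}}/\!\sim$.
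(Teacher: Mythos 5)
Your proof is correct, and its skeleton coincides with the paper's: reduce an automorphism $\varphi$ of $H_{16}$ to an element of $\HopfAuto(H_8)=\{\id,\psi,\phi,\psi\phi\}$, then rule out the two automorphisms that swap $X$ and $Y$. The difference lies in how each step is executed. The paper restricts $\varphi$ to the Hopf subalgebra $H_8$, using that $H_8$ is the coradical of $H_{16}$ and hence preserved by any Hopf automorphism, and then merely asserts that ``one easily sees'' $\psi$ and $\psi\phi$ cannot extend to $H_{16}$. You instead descend to the quotient by $J=H_8T$, observing that $J$ is a characteristic Hopf ideal (it is the Jacobson radical, as $J^2=0$ and $H_{16}/J\cong H_8$ is semisimple); this is a valid substitute, since the splitting $H_8\hookrightarrow H_{16}\twoheadrightarrow H_{16}/J$ identifies $\bar\varphi$ with an automorphism of $H_8$ and $\alpha_\pm$ kill $J$. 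More importantly, your skew-primitive count supplies a genuine proof of exactly the step the paper leaves to the reader, and it checks out: for $bT\in J$ one has $\Delta(bT)=b_{(1)}X\otimes b_{(2)}T+b_{(1)}T\otimes b_{(2)}$, the leg in $J\otimes H_8$ of the $g$-skew-primitive condition forces $\Delta_{H_8}(b)=b\otimes 1$, hence $b\in\kk$, and the leg in $H_8\otimes J$ then forces $g=X$ when $b\neq 0$; thus $P_Y=\kk(Y-1)$ while $\dim P_X\geq 2$, so no automorphism sends $X$ to $Y$. (Your alternative ending also works: $\varphi(T)$ would be a nonzero multiple of $Y-1$, yet $\varphi(T)^2=0$ while $(Y-1)^2=2(1-Y)\neq 0$.) The remaining bookkeeping---$\varphi$ fixes the central group-like $XY$ and so permutes $\{X,Y\}$, and $\id,\phi$ fix each $\alpha_\s$ while $\psi,\psi\phi$ interchange $\alpha_+$ and $\alpha_-$ (as $\alpha_\s(\psi(Z))=-\s\zeta_4$, consistent with Lemma~\ref{prp:sim})---agrees with the computations in Section~\ref{subsubsec:Mas}. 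In short: same strategy, a radical-quotient reduction in place of the paper's coradical restriction, and your write-up is the more self-contained of the two because it proves the non-extension claim rather than asserting it.
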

\begin{proof}
Let $\varphi$ be a Hopf algebra automorphism on $H_{16}$.
Since $H_8$ is the coradical of $H_{16}$, we see that
\[
\varphi|_{H_8} \in \HopfAuto(H_8) = \{ \id, \psi, \phi, \psi \phi \},
\]
see Section~\ref{subsubsec:Mas} for the notation.
However, one easily sees that both $\psi$ and $\psi\phi$ cannot extend to a Hopf algebra automorphism on $H_{16}$.
The proof is done.
\end{proof}

Set $\HH^{(1)} := H_{16}^{\coo(X,\alpha_+)}$, $\HH^{(2)} := H_{16}^{\coo(X,\alpha_-)}$,
$\HH^{(3)} := H_{16}^{\coo(Y,\alpha_+)}$, $\HH^{(4)} := H_{16}^{\coo(Y,\alpha_-)}$ for simplicity.
Since $\alpha_\pm\hits T = T$ and $XTX=YTY=-T$, we see that $T$ is an odd element of $\HH^{(i)}$ ($i\in\{1,2,3,4\}$), see Theorem~\ref{thm:super-form}.
Set $g:=XY$, $v:=\frac{1-\zeta_4}{4}(Z+\zeta_4XZ+\zeta_4YZ+XYZ)$ and $w:=\frac{\sqrt2}{4}(Z-\zeta_4XZ+\zeta_4YZ-XYZ)$ as before
(see Section~\ref{subsubsec:Mas2}).

Then a direct computation shows that equations
\[
Tg = gT,\quad
Tv=vT,\quad
Tw=-wT
\]
hold in each $\HH^{(i)}$ ($i\in\{1,2,3,4\}$).
By Theorem~\ref{thm:super-form}, the comultiplications of $\HH^{(i)}$ are given as
$\Delta_{\HH^{(1)}}(T) = \Delta_{\HH^{(2)}}(T)= 1\otimes T+T\otimes1$ and
$\Delta_{\HH^{(3)}}(T) = \Delta_{\HH^{(4)}}(T)= XY\otimes T+T\otimes1=g\otimes T+T\otimes1$.
The above argument shows the following.

\begin{proposition} \label{prp:K_8}
For $\zeta\in\{\zeta_4,-\zeta_4\}$ and $\epsilon,\eta\in\{0,1\}$, let $\mathcal{K}_8(\zeta;\epsilon,\eta)$
be the $8$-dimensional superalgebra generated by $g,v,w$ and $t$ subject to
\[
\begin{gathered}
|g|=|v|=0,\quad |w|=|t|=1,\\
g^2=1,\quad
gv=v,\quad
gw=-w,\quad
vw=wv=0,\quad
v^2=\frac12(1+g),\quad
w^2=\frac12(1-g), \\
t^2=0,\quad
tg=gt,\quad
tv=(-1)^\epsilon vt,\quad
tw=-(-1)^\epsilon wt.
\end{gathered}
\]
Then the following comultiplication $\Delta$, counit $\varepsilon$ and antipode $S$ make
$\mathcal{K}_8(\zeta;\epsilon,\eta)$ into a Hopf superalgebra.
\[
\begin{gathered}
\Delta(g)=g\otimes g,\quad
\Delta(v)=v\otimes v - \zeta w\otimes w,\\
\Delta(w)=v\otimes w+w\otimes v,\quad
\Delta(t)=g^\eta \otimes t+t\otimes 1,\\
\varepsilon(g)=\varepsilon(v)=1,\quad \varepsilon(w)=\varepsilon(t)=0,\\
S(g)=g,\quad S(v)=v,\quad S(w)=\zeta w,\quad S(t)=-g^{\eta}t.
\end{gathered}
\]
Moreover, $\mathcal{K}_8(\zeta;\epsilon,\eta)$ is non-semisimple and
neither $\mathcal{K}_8(\zeta;\epsilon,\eta)$ nor $\mathcal{K}_8(\zeta;\epsilon,\eta)^*$ is pointed.
\end{proposition}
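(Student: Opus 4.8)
The plan is to realize each $\mathcal{K}_8(\zeta;\epsilon,\eta)$ as a coinvariant subalgebra of one of the two self-dual $16$-dimensional Hopf algebras $H_{16}(\pm\zeta_4)$; then it is a super-form, hence a Hopf superalgebra whose structure maps are dictated by Theorem~\ref{thm:super-form}, and its global properties transfer along the bosonization. The four super-data for $H_{16}=H_{16}(\zeta_4)$ already produce the coinvariant subalgebras $\HH^{(1)},\dots,\HH^{(4)}$ discussed above, and running the identical analysis for $H_{16}(-\zeta_4)$ (whose sole change is the relation $TZ=-\zeta_4XZT$) produces four more. First I would fix the dictionary between super-data and parameters: the choice of $\alpha_+$ versus $\alpha_-$ controls $\zeta$, because the subalgebra generated by $g,v,w$ is precisely the $H_8$-coinvariant subalgebra, which by Section~\ref{subsubsec:Mas2} is one of $\A_4(\pm\zeta_4)$; the choice of $X$ versus $Y$ as the grading group-like controls $\eta\in\{0,1\}$; and the ambient algebra $H_{16}(\zeta_4)$ versus $H_{16}(-\zeta_4)$ controls $\epsilon\in\{0,1\}$. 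Counting shows each of the eight triples $(\zeta,\epsilon,\eta)$ arises exactly once.

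Next I would verify the presentation. The relations and coproduct on $g,v,w$ are read off verbatim from the $\A_4(\pm\zeta_4)$ computation of Section~\ref{subsubsec:Mas2}, giving $g^2=1$, $gv=v$, $gw=-w$, $vw=wv=0$, $v^2=\frac12(1+g)$, $w^2=\frac12(1-g)$ together with $\Delta(v)=v\otimes v-\zeta w\otimes w$, $\Delta(w)=v\otimes w+w\otimes v$ and $S(w)=\zeta w$. For the new generator $t:=T$, the relations $t^2=0$ and $tg=gt$ follow at once from $T^2=0$ and $TX=-XT$, $TY=-YT$. The one genuine computation is $tv$ and $tw$: expanding $v$ and $w$ in the basis $\{Z,XZ,YZ,XYZ\}$ and applying $TZ=\pm\zeta_4XZT$ repeatedly (with $X^2=Y^2=1$, $XY=YX$) collapses to $tv=(-1)^\epsilon vt$ and $tw=-(-1)^\epsilon wt$, where $\epsilon$ records the sign in $TZ=\pm\zeta_4XZT$; a short check confirms $\epsilon=0$ for $H_{16}(\zeta_4)$ and $\epsilon=1$ for $H_{16}(-\zeta_4)$. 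The coproduct and antipode of $t$ come from Theorem~\ref{thm:super-form} applied to the skew primitive element $T$: taking the grading group-like to be $X$ (resp.\ $Y$), the formula of that theorem collapses $X$-skew primitivity of $T$ to $\Delta(t)=1\otimes t+t\otimes 1$ (so $\eta=0$) and $Y$-skew primitivity to $\Delta(t)=g\otimes t+t\otimes 1$ with $g=XY$ (so $\eta=1$), and $S(t)=-g^{\eta}t$ in both cases; this is exactly the computation recorded just before the statement. Finally, the spanning set $\{1,g,v,w,t,gt,vt,wt\}$ exhibits dimension $8$, consistently with $\dim\widehat{\mathcal{K}_8(\zeta;\epsilon,\eta)}=16$.

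The global properties then follow formally. Since $\widehat{\mathcal{K}_8(\zeta;\epsilon,\eta)}\cong H_{16}(\pm\zeta_4)$ is non-semisimple, Lemma~\ref{prp:pt/ss} gives that $\mathcal{K}_8(\zeta;\epsilon,\eta)$ is non-semisimple, and the same lemma, with $H_{16}(\pm\zeta_4)$ not pointed, gives that $\mathcal{K}_8(\zeta;\epsilon,\eta)$ is not pointed. For the dual, Lemma~\ref{prp:dual} gives $\widehat{\mathcal{K}_8(\zeta;\epsilon,\eta)^*}\cong\big(\widehat{\mathcal{K}_8(\zeta;\epsilon,\eta)}\big)^*\cong H_{16}(\pm\zeta_4)^*$, and since $H_{16}(\pm\zeta_4)$ is self-dual and not pointed, Lemma~\ref{prp:pt/ss} once more shows $\mathcal{K}_8(\zeta;\epsilon,\eta)^*$ is not pointed.

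I expect the main obstacle to be the middle step: correctly tracking the sign of $\zeta_4$ through the expansion of $v$ and $w$ so as to pin down $\epsilon$, and confirming that the dictionary (super-datum)$\mapsto(\zeta,\epsilon,\eta)$ is a genuine bijection onto all eight triples, with neither repetition nor omission across the two ambient algebras $H_{16}(\pm\zeta_4)$. Everything else is either a direct quotation of the $\A_4(\pm\zeta_4)$ computation of Section~\ref{subsubsec:Mas2} or a formal consequence of Theorem~\ref{thm:super-form} and Lemmas~\ref{prp:pt/ss} and~\ref{prp:dual}.
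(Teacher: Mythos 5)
Your overall route is exactly the paper's: the paper likewise realizes the $\mathcal{K}_8(\zeta;\epsilon,\eta)$ as the coinvariant subalgebras of $H_{16}(\pm\zeta_4)$ attached to the four super-data $(X,\alpha_\pm)$, $(Y,\alpha_\pm)$, reads off the relations for $t=T$ by commuting $T$ past the basis $\{Z,XZ,YZ,XYZ\}$, obtains $\Delta(t)$, $S(t)$ and the parity of $T$ from Theorem~\ref{thm:super-form} (using $\alpha_\pm\hits T=T$ and $XTX=YTY=-T$), and transfers non-semisimplicity and non-pointedness of both $\HH$ and $\HH^*$ through Lemma~\ref{prp:pt/ss}, Lemma~\ref{prp:dual} and the self-duality of $H_{16}(\pm\zeta_4)$. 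Those parts of your argument are sound, as is your use of the $\eta$-coordinate: the choice of $X$ versus $Y$ as grading group-like indeed converts $X$-skew primitivity of $T$ into $\Delta(t)=1\otimes t+t\otimes1$ or $\Delta(t)=g\otimes t+t\otimes1$ with $g=XY$.

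However, your parameter dictionary contains a genuine sign error, precisely at the step you flagged as delicate. You assert that $\epsilon$ is controlled by the ambient algebra alone (``$\epsilon=0$ for $H_{16}(\zeta_4)$ and $\epsilon=1$ for $H_{16}(-\zeta_4)$''), with $\alpha_\pm$ independently controlling $\zeta$. This fails because the elements $v,w$ of Section~\ref{subsubsec:Mas2} satisfy $\alpha_-\hits v=-v$, so they do \emph{not} lie in $H_{16}^{\coo(X,\alpha_-)}$; the generators of the $\alpha_-$-coinvariants are the variants $v',w'$ obtained by replacing $\zeta_4$ with $-\zeta_4$, and for these the commutation sign flips. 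Concretely, in $H_{16}(\zeta_4)$ one computes from $TZ=\zeta_4XZT$ that $T(XZ)=-\zeta_4 ZT$, $T(YZ)=-\zeta_4 XYZT$, $T(XYZ)=\zeta_4 YZT$, whence $Tv=vT$ but $Tv'=-v'T$ (and $Tw=-wT$ but $Tw'=w'T$). So the four super-data of $H_{16}(\zeta_4)$ already produce both $\epsilon=0$ (coupled with one value of $\zeta$) and $\epsilon=1$ (coupled with the other): $\epsilon$ is determined by the ambient sign \emph{together with} the sign of $\alpha_\pm$, not by the ambient alone, and your claimed ``short check'' is false for the $\alpha_-$ data. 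The damage is contained: since $\zeta$ depends only on $\alpha_\pm$ (the $H_8$-part is independent of the ambient sign) while $\epsilon$ flips with the ambient for each fixed $\alpha$, the corrected assignment still maps the eight super-data bijectively onto the eight triples $(\zeta;\epsilon,\eta)$, so the proposition and the remainder of your argument survive once this bookkeeping is redone.
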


For $\zeta\in\{\zeta_4,-\zeta_4\}$ and $\epsilon,\eta\in\{0,1\}$,
we may identify $\A_4(\zeta)$ as a Hopf sub-superalgebra of $\mathcal{K}_8(\zeta;\epsilon,\eta)$
and we see that the bosonization of $\mathcal{K}_8(\zeta;\epsilon,\eta)$ is isomorphic to $H_{16}(\zeta)$.
As in Section~\ref{subsubsec:Mas3}, we can determine duals of $\mathcal{K}_8(\zeta;\epsilon,\eta)$.
For example, the following gives a non-degenerate Hopf pairing
$\langle\;,\;\rangle : \mathcal{K}_8(\zeta;0,1)\times \mathcal{K}_8(\zeta;1,0) \to \kk$.
\[
\begin{array}{c||c|c|c|c|}
\langle\;,\;\rangle & g & v & w & t \\ \hline \hline
g & 1 & -1 & 0 & 0 \\ \hline
v & -1 & 0 & 0 & 0 \\ \hline
w & 0 & 0 & \omega & 0 \\ \hline
t & 0 & 0 & 0 & 1 \\ \hline
\end{array}
\]
Here, $\omega$ denotes an element in $\kk$ satisfying $\omega^2=-\zeta$.

As a summary, we get the following result.
\begin{theorem} \label{thm:K_8}
Let $\HH$ be a non-semisimple Hopf superalgebra of dimension $8$
such that $\HH$ nor $\HH^*$ is pointed and $\HH_{\bar{1}} \ne 0$.
Then $\HH$ is isomorphic to one of the eight Hopf superalgebras $\mathcal{K}_8(\zeta;\epsilon,\eta)$ $(\zeta\in\{\zeta_4,-\zeta_4\}, \epsilon,\eta\in\{0,1\})$,
which are pairwise non-isomorphic.
Moreover,
$\mathcal{K}_8(\zeta;0,0)$ and $\mathcal{K}_8(\zeta;1,1)$ are self-dual
and the dual of $\mathcal{K}_8(\zeta;0,1)$ is isomorphic to $\mathcal{K}_8(\zeta;1,0)$.
\end{theorem}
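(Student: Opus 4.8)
The plan is to reverse the bosonization correspondence of Section~\ref{sec:bos}, following the pattern of the earlier subsections. First I would replace $\HH$ by its bosonization $\widehat{\HH}$, a Hopf algebra of dimension $16$. Lemma~\ref{prp:pt/ss} and Proposition~\ref{prp:summary} show that $\widehat{\HH}$ is non-semisimple and non-pointed, and Lemma~\ref{prp:dual} identifies $(\widehat{\HH})^*$ with $\widehat{\HH^*}$, which is non-pointed because $\HH^*$ is. Thus $\widehat{\HH}$ is a non-semisimple Hopf algebra of dimension $16$ for which neither it nor its dual is pointed; by the classification of Garc\'ia and Vay~\cite{GarVay10} recalled at the start of this subsection, the only such Hopf algebras are $H_{16}(\zeta_4)$ and $H_{16}(-\zeta_4)$. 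Hence $\widehat{\HH}\cong H_{16}(\zeta)$ for some $\zeta\in\{\zeta_4,-\zeta_4\}$, so that $\HH$ is a non-trivial super-form of $H_{16}(\zeta)$.

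Next I would enumerate these super-forms via the bijection of Definition~\ref{def:SD} between $\SD{H_{16}(\zeta)}/\!\sim$ and the isomorphism classes of Hopf superalgebras $\HH$ with $\HH_{\bar1}\neq0$ and $\widehat{\HH}\cong H_{16}(\zeta)$. The computations of $\G(H_{16}(\zeta))$, $\G(H_{16}(\zeta)^*)$ and $\HopfAuto(H_{16}(\zeta))$ already carried out give $\SD{H_{16}(\zeta)}=\{(X,\alpha_+),(X,\alpha_-),(Y,\alpha_+),(Y,\alpha_-)\}$, and the lemma just before Proposition~\ref{prp:K_8} shows that these four super-data are pairwise non-equivalent under $\sim$. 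Computing each coinvariant subalgebra $H_{16}(\zeta)^{\coo(g,\alpha)}$ through Theorem~\ref{thm:super-form}---in particular determining the parity of the generator $T$ and whether it is odd primitive or odd $g$-skew primitive---matches each with one of the $\mathcal{K}_8(\zeta;\epsilon,\eta)$ of Proposition~\ref{prp:K_8}. Since $H_{16}(\zeta_4)\not\cong H_{16}(-\zeta_4)$ and, for each fixed $\zeta$, the four super-data are non-equivalent, the one-to-one correspondence immediately yields that the resulting eight Hopf superalgebras are pairwise non-isomorphic.

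For the duality assertions I would combine Lemma~\ref{prp:dual} with the bijection $\AD{A}\to\AD{A^*};(g,\alpha)\mapsto(\alpha,g)$ from the remark in Section~\ref{subsec:YD-data}. Self-duality of $H_{16}(\zeta)$ provides a Hopf algebra isomorphism $H_{16}(\zeta)^*\cong H_{16}(\zeta)$; transporting $(\alpha,g)$ back along it gives an involution on the four classes of $\SD{H_{16}(\zeta)}/\!\sim$, and tracking its effect on the representatives shows that it fixes the classes yielding $\mathcal{K}_8(\zeta;0,0)$ and $\mathcal{K}_8(\zeta;1,1)$ and interchanges those yielding $\mathcal{K}_8(\zeta;0,1)$ and $\mathcal{K}_8(\zeta;1,0)$. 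Equivalently, one verifies directly that the tabulated bilinear form is a non-degenerate Hopf pairing $\mathcal{K}_8(\zeta;0,1)\times\mathcal{K}_8(\zeta;1,0)\to\kk$ and that the analogous symmetric pairings realize the two self-dualities.

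The main obstacle is the bookkeeping in the middle step: correctly recording how the three binary choices---the sign in $TZ=\zeta XZT$ distinguishing $H_{16}(\zeta_4)$ from $H_{16}(-\zeta_4)$, the choice of $\alpha_\pm$, and the choice of $X$ versus $Y$---feed into the three parameters of $\mathcal{K}_8(\zeta;\epsilon,\eta)$, namely the commutation sign in $tv=(-1)^\epsilon vt$, the coefficient $\zeta$ in $\Delta(v)=v\otimes v-\zeta w\otimes w$, and the exponent $\eta$ in $\Delta(t)=g^\eta\otimes t+t\otimes1$. Keeping this dictionary consistent, and confirming that $\HopfAuto(H_{16}(\zeta))$ is too small to collapse any of the four classes---which relies on $\AM_8$ being the coradical of $H_{16}(\zeta)$ and hence preserved by every Hopf automorphism---is the delicate part; the remaining verifications are routine computations of the kind already performed for $\AM_8$.
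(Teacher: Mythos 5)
Your proposal is correct and follows essentially the same route as the paper: bosonize to a non-semisimple dimension-$16$ Hopf algebra with neither it nor its dual pointed, invoke the Garc\'ia--Vay classification to get $\widehat{\HH}\cong H_{16}(\pm\zeta_4)$, compute $\SD{H_{16}}=\{(X,\alpha_+),(X,\alpha_-),(Y,\alpha_+),(Y,\alpha_-)\}$, show the four data are pairwise non-equivalent via the coradical argument (only $\id$ and $\phi$ in $\HopfAuto(\AM_8)$ extend to $H_{16}$), and read off the coinvariant subalgebras through Theorem~\ref{thm:super-form}. The only minor variation is your optional transport-of-super-data argument for the duality assertions; the paper instead just exhibits the explicit non-degenerate Hopf pairings, which is the second of the two routes you propose.
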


We say that a Hopf (resp.~super)algebra $\mathcal{K}$ has the {\it Chevalley property}
if the category of right $\mathcal{K}$-(resp.~super)comodules has the Chevalley property,
that is, the tensor product of any two simple objects is semisimple.
It follows directly that the Hopf superalgebras $\mathcal{K}_8(\zeta;\epsilon,\eta)$ have the Chevalley property
since the Hopf algebra $H_{16}(\zeta)$ has the Chevalley property (\cite{CalDasMasMen04}).

\providecommand{\bysame}{\leavevmode\hbox to3em{\hrulefill}\thinspace}
\providecommand{\MR}{\relax\ifhmode\unskip\space\fi MR }
\providecommand{\MRhref}[2]{%
  \href{http://www.ams.org/mathscinet-getitem?mr=#1}{#2}
}
\providecommand{\href}[2]{#2}

\end{document}